\documentclass[11pt]{article}
\usepackage[a4paper,margin=26mm]{geometry}
\usepackage[T1]{fontenc}
\usepackage{lmodern}
\usepackage{microtype}
\usepackage{xspace}
\usepackage{amsmath,amssymb,amsthm,mathtools,bm}
\usepackage{booktabs,tabularx,array,multirow}
\usepackage{graphicx}
\usepackage{tikz}
\usetikzlibrary{arrows.meta}
\usepackage{pgfplots}
\pgfplotsset{compat=1.18}
\usepackage{float}
\usepackage{xcolor}
\usepackage{enumitem}
\usepackage{placeins}
\usepackage[numbers,sort&compress]{natbib}
\usepackage[colorlinks=true,linkcolor=blue!45!black,citecolor=blue!45!black,
  urlcolor=blue!45!black,
  pdftitle={RUPA: Nonlinear volume consistency, constraint geometry and singular penalty limits in finite elements},
  pdfauthor={Yanlin Liu, Chao Huang, Kaixiang Yao, Yao Shen},
  pdfsubject={Quadrature consistency and nonlinear cell-volume constraints in finite elasticity},
  pdfkeywords={finite elasticity, numerical integration, nonlinear constraints, Holder error bounds, singular penalty limits, tensor-product elements}]{hyperref}
\usepackage[nameinlink,capitalise]{cleveref}

\newcommand{\R}{\mathbb{R}}
\newcommand{\GLp}{\mathrm{GL}^{+}(3)}
\newcommand{\cof}{\operatorname{cof}}
\newcommand{\tr}{\operatorname{tr}}
\newcommand{\dev}{\operatorname{dev}}
\newcommand{\sym}{\operatorname{sym}}

\newcommand{\rank}{\operatorname{rank}}
\newcommand{\spann}{\operatorname{span}}

\newcommand{\Qfour}{Q_4}
\newcommand{\That}{\widehat T}
\newcommand{\Pone}{\mathbb P_1}
\newcommand{\Ptwo}{\mathbb P_2}

\newtheorem{theorem}{Theorem}
\newtheorem{proposition}[theorem]{Proposition}
\newtheorem{lemma}[theorem]{Lemma}
\newtheorem{corollary}[theorem]{Corollary}
\theoremstyle{definition}

\theoremstyle{remark}

\AddToHook{env/theorem/begin}{\crefalias{theorem}{theorem}}
\AddToHook{env/proposition/begin}{\crefalias{theorem}{proposition}}
\AddToHook{env/lemma/begin}{\crefalias{theorem}{lemma}}
\AddToHook{env/corollary/begin}{\crefalias{theorem}{corollary}}
\AddToHook{env/definition/begin}{\crefalias{theorem}{definition}}
\AddToHook{env/remark/begin}{\crefalias{theorem}{remark}}

\AddToHook{cmd/appendix/after}{%
  \crefalias{section}{appendix}%
  \crefalias{subsection}{subappendix}%
  \crefalias{subsubsection}{subsubappendix}}

\title{\textbf{RUPA: Nonlinear volume consistency,\\
constraint geometry and singular penalty\\
limits in finite elements}}
\author{Yanlin Liu$^1$ \and Chao Huang$^{2,\dagger}$ \and
Kaixiang Yao$^{3,\dagger}$ \and Yao Shen$^{2,*}$}
\date{\small $^1$The University of Melbourne, Melbourne, Australia\\
$^2$Shanghai Jiao Tong University, Shanghai, China\\
$^3$Southern University of Science and Technology, Shenzhen, China\\
$^\dagger$Chao Huang and Kaixiang Yao share second authorship.\\
$^*$Corresponding author: \texttt{yaoshen@sjtu.edu.cn}}

\begin{document}
\maketitle

\begin{abstract}
Volume quadrature can change nonlinear finite-element constraints
while preserving their reference-state derivatives.
We connect an explicit determinant defect to feasible-set geometry
and singular mechanical response.
For affine tensor elements of coordinate degree $p\ge3$ with
$n\ge p+1$ Gauss points per coordinate, determinant volume is exact
precisely when $2n\ge3p$. Below that threshold we construct a
boundary-fixed cubic defect at every order.
The same directions yield a full-space cube-root residual--distance
bound under explicit cell-support, coefficient and physical-norm
assumptions, with mesh-uniform upper constants at fixed order.
With all cell-pressure equations retained, the volume Jacobian
gains rank at nearby feasible states despite agreement through second
derivatives at rest; the cube-root exponent is sharp on each fixed mesh.
A general localized-minimum theorem shows that the first reduced
compatibility term contributes its weighted square to the leading
energy in a joint small-load, large-bulk limit.
Cubic and quadratic defects therefore produce sextic and quartic terms.
The full cubic-element interior space has an exact normal form and
sharp local error exponents. Curved quadratic tetrahedra supply the
second-order contrast, a sparse rational witness and an exact
four-Jacobian volume formula.
Finite-strain tensor calculations illustrate normalized response
separation, with explicit stationary-point, extreme-bulk and
pressure-recovery qualifications.
The constructive correction preserves exact cell volumes, so
quadrature feasibility remains distinct from physical volume preservation.
\end{abstract}

\noindent\textbf{Keywords:} finite-strain elasticity; quadrature consistency;
nonlinear volume constraints; H\"older error bounds; singular penalty limits;
tensor-product elements

\section{Introduction}
\label{sec:introduction}

The volume of a deformed cell is determined by its boundary values.
For a fully clamped body, the exact cell-volume changes therefore sum
to zero. A quadrature rule can lose this identity while reproducing
the reference volume and several derivatives. When every cell-pressure
equation is retained, the lost identity can become an additional
nonlinear constraint. RUPA---\emph{Regularity under Underintegration
and Penalty Asymptotics}---traces this change from the element
construction through feasible-set geometry to mechanical response.
The comparison keeps the material energy fixed and changes only
the integration of one scalar volume equation per cell.

A three-amplitude restriction makes the mechanism concrete.
For the tensor elements constructed below, choose displacement fields
\(U_1,U_2,U_3\), each zero on every face of one cell, and set
\(u(a)=a_1U_1+a_2U_2+a_3U_3\), where \(a=(a_1,a_2,a_3)\in\R^3\).
The exact and quadrature cell-volume changes, denoted here by
\(q^I\) and \(q^Q\), satisfy
\[
 q^I(a)=0,\qquad q^Q(a)=\gamma a_1a_2a_3,\qquad \gamma\ne0.
\]
At \(a=0\), their values and first two derivatives agree.
At \(a=(0,\tau,\tau)\), both volumes are still zero, while the
quadrature derivative in the missing direction is
\(\partial_{a_1}q^Q=\gamma\tau^2\).
The quadrature constraint excludes motions allowed by exact volume.
Figure~\ref{fig:cubic-volume-mechanism} shows this restriction;
the assembled theory retains the complete displacement space.

\begin{figure}[H]
\centering
\begin{tikzpicture}[x={(1.9cm,-.50cm)},y={(1.15cm,.80cm)},z={(0cm,1.9cm)},
  line join=round,>=Stealth,font=\small]
  \filldraw[fill=blue!10,draw=blue!45] (0,0,0)--(1.25,0,0)--(1.25,1.25,0)--(0,1.25,0)--cycle;
  \filldraw[fill=green!10,draw=green!45!black] (0,0,0)--(0,1.25,0)--(0,1.25,1.25)--(0,0,1.25)--cycle;
  \filldraw[fill=orange!12,draw=orange!65!black,fill opacity=.7]
    (0,0,0)--(1.25,0,0)--(1.25,0,1.25)--(0,0,1.25)--cycle;
  \draw[->] (0,0,0)--(1.5,0,0) node[below] {$a_1$};
  \draw[->] (0,0,0)--(0,1.5,0) node[right] {$a_2$};
  \draw[->] (0,0,0)--(0,0,1.55) node[above] {$a_3$};
  \fill[green!40!black] (0,.85,.85) circle[radius=2pt];
  \fill[red!75!black] (.85,.85,.85) circle[radius=2pt];
  \draw[densely dashed,red!70!black] (0,.85,.85)--(.85,.85,.85);
  \draw[gray] (0,.85,.85)--(-.4,.3,1.0);
  \node[anchor=east,align=right,font=\footnotesize] at (-.5,.3,1.15)
    {$(0,\tau,\tau)$\\both volumes zero};
  \node[anchor=west,align=left,font=\footnotesize] at (.92,.85,.85)
    {$(\tau,\tau,\tau)$\\$q^Q=\gamma\tau^3$};
\end{tikzpicture}
\caption{The cubic defect on the constructed three-direction subspace.
The three coordinate planes, drawn as finite patches, form the
quadrature-feasible set; exact volume
is unchanged throughout this amplitude space. The diagonal ray has cubic
residual and linear distance to those planes. At $(0,\tau,\tau)$ the
quadrature derivative in the $a_1$ direction is nonzero for $\tau\ne0$.
This is an algebraic restriction of the tensor construction; physical
admissibility uses small amplitudes, and the later distance theorem
concerns the full displacement space.}
\label{fig:cubic-volume-mechanism}
\end{figure}

The first part of the paper determines exactly when such a
boundary-fixed defect exists. Let \(\mathbb Q_p\) denote polynomials
of degree at most \(p\) in each coordinate, and let \(n\) be the Gauss
count per coordinate. For affine tensor elements with \(p\ge3\) and
\(n\ge p+1\), determinant volume is exact precisely when \(2n\ge3p\).
Every remaining pair admits an explicit direction triple with a
positive first-missed-moment coefficient. The anisotropic classification
also identifies cancellation caused by low transverse order.
This converse construction goes beyond degree counting.

The same directions give a constructive cube-root bound on distance
to the quadrature-defined feasible set. Four sign-selected evaluations
produce a scalar zero; independent cell support combines the corrections.
Under explicit coefficient and physical-norm bounds, the estimate in
\(H^1\), which measures displacement and its gradient, is uniform under
refinement at fixed order. Separate maximum-strain neighborhoods ensure
admissibility. The correction preserves all cell-face traces and hence
all exact cell volumes: it can remove quadrature residuals while
retaining an existing physical volume error.

On a connected, fully clamped tensor mesh containing a cell with this
defect, the volume maps agree
through second derivatives at rest, yet the quadrature Jacobian gains
rank at nearby common feasible states. The surviving aggregate cubic
proves fixed-mesh sharpness of the distance exponent and supplies the
mechanical consequence. More generally, let \(k\) be the degree of the
first nonzero compatibility term after the independent constraint
equations have been solved locally. With load amplitude \(\varepsilon\)
and bulk coefficient proportional to \(\varepsilon^{-2k+2}\), a
singular-penalty theorem gives an additional leading energy equal to
the weighted square of that term. The tensor defect is cubic and
produces a sextic term. At fixed finite bulk, the two models have the
same leading small-load response.

Two exact models resolve the geometry further. The full \(\mathbb Q_3\)
interior space has a matrix normal form and sharp local linear,
square-root and cube-root error exponents. Curved ten-node quadratic
tetrahedra (Tet10) supply a second-order contrast, an explicit
three-node rational obstruction, a quartic penalty consequence and
exact volume recovery from four existing Jacobian matrices.
The numerical evidence follows the same construction--geometry--response
chain. It retains the distinction between computed stationary points
and the localized minima of the penalty theorem, including extreme
bulk ratios and the nearly flat fourth-order response branch.

The determinant identities are classical null-Lagrangian results
\citep{ball1976convexity,olver1988null}. The companion study MORTIS
uses the same polarized defect for compatible cofactor reference forms
\citep{liu2026twominor}. Scalar harmonic-polynomial error bounds
\citep[Theorem~3.1]{badger2017harmonic}, higher-order regularity and
perturbation theory
\citep{izmailov2001error,bonnans1998perturbations,dontchev2014implicit}
supply established foundations. The contributions here are the explicit
finite-element defect construction, quantitative cellwise assembly,
feasible rank lifting and linked singular mechanical limit.

Integration-dependent locking and stability are established concerns
in finite elasticity \citep{maas2016tet10,foulk2021extending};
high-order analysis treats overintegration and bulk amplification
\citep[Sections~4.6.4 and 5.3]{heisserer2008highorder}.
Nonlinear Poisson stiffening concerns shell kinematics unable to satisfy
nonlinear isochoric conditions at integration points
\citep{willmann2023poisson}. Here one integral equation is retained per
cell, and cubature destroys a boundary-induced relation among those
equations despite the stated derivative agreement.
Pressure robustness and compatible mixed formulations offer related
perspectives \citep{linke2014helmholtz,fu2025fourfield}.

After the setting, \cref{sec:tensor-admission,sec:tri-affine,sec:assembled-geometry,sec:singular-penalty}
develop the construction, feasible geometry and mechanical consequence.
The complete cubic-cell model and tensor observations follow in
\cref{sec:sharp-models,sec:evidence}.
\Cref{sec:cubature} develops the Tet10 contrast through exact restoration.
The appendices retain supporting tangent theory, rational certificates,
numerical protocols and constitutive derivations.

\section{Discrete volume maps and nonlinear feasibility}
\label{sec:discrete-volume-setting}

\subsection{Geometry, displacement spaces, and norms}

Let $\Omega\subset\R^3$ be a bounded Lipschitz reference domain, partitioned
into finitely many conforming cells $K_e$, $e=1,\ldots,m$. Each cell is the
image of a fixed parent cell $\widehat K_e$ under a one-to-one map
$X_e:\widehat K_e\to K_e$. We assume positive Jacobian determinants and
bounded first derivatives and inverse derivatives on each cell. Geometry
maps and quadrature rules remain fixed as the displacement is varied.

The space $\mathcal U_h$ consists of continuous finite-element
displacements with zero trace on the complete outer boundary. Its members
are piecewise differentiable and belong to
$H^1_0(\Omega;\R^3)\cap W^{1,\infty}(\Omega;\R^3)$.
Here $H^1$ consists of square-integrable fields with square-integrable first
weak derivatives; the subscript zero prescribes zero boundary trace.
$W^{1,\infty}$ additionally requires the field and its first derivatives
to be essentially bounded. The subscript $h$ identifies the mesh.
Free displacement coefficients identify $\mathcal U_h$ with $\R^N$, where
$N$ is their number. In the Lagrange elements below these are nodal
coefficients. We state explicitly when a
result concerns that coefficient norm or a physical Sobolev norm.
For symmetric forms or matrices, $A\preceq B$ means
$v^TAv\le v^TBv$ for every vector $v$; $A\succ0$ denotes positive
definiteness.

For a displacement $u\in\mathcal U_h$, write
$\widehat u_e=u\circ X_e$ for its pullback and
$y_e=X_e+\widehat u_e$ for the current parent-cell map. The parent and
physical gradients are related by
\begin{equation}
 J_{X,e}=D_\xi X_e,\qquad J_{y,e}=D_\xi y_e,\qquad
 F_e(u)=\nabla_X(X+u)=J_{y,e}J_{X,e}^{-1}.
 \label{eq:general-kinematics}
\end{equation}
The row index of a gradient is the vector component and the column index
is the differentiation coordinate. The deformation gradient $F_e$ acts on
physical material coordinates. The matrix $J_{X,e}$ belongs to the fixed
reference geometry.

For matrices $H,L\in\mathbb M:=\R^{3\times3}$, define the Frobenius product
and norm by $H:L=\operatorname{tr}(H^TL)$ and
$\|H\|_F^2=H:H$. Write $I_3$ for the identity matrix,
$\sym H=(H+H^T)/2$, and $\dev H=H-(\tr H)I_3/3$.
Matrix norms with subscript $2$ are induced Euclidean norms; vector
$2$-norms are Euclidean. Write
$\GLp=\{F\in\mathbb M:\det F>0\}$ for the orientation-preserving
invertible matrices. The physical norms used below are
\[
 \|u\|_{H^1}^2=\int_\Omega(|u|^2+\|\nabla_Xu\|_F^2)\,dX,
 \qquad
 \|\nabla_Xu\|_\infty=\operatorname*{ess\,sup}_{X\in\Omega}
                         \|\nabla_Xu(X)\|_F.
\]
A fixed Poincar\'e inequality has the form
$\|u\|_{L^2}\le C_P\|\nabla_Xu\|_{L^2}$ on the clamped space
\citep{brezis2011functional}. Uniform statements retain a common constant
$C_P$ across the stated mesh family. Constants in $O(\cdot)$ and
$\Theta(\cdot)$ estimates are independent of the indicated limiting
parameter; other dependencies are specified with each result.

\subsection{Exact and quadrature volume changes}

For a parent-cell integrand $f$, let
\[
 \mathcal I_e[f]=\int_{\widehat K_e}f(\xi)\,d\xi,\qquad
 \mathcal Q_e[f]=\sum_{q=1}^{n_e}w_{eq}f(\xi_{eq}),\qquad
 \mathcal L_e=\mathcal Q_e-\mathcal I_e.
\]
The points $\xi_{eq}$ and weights $w_{eq}$ define the prescribed cubature,
that is, multidimensional numerical integration. The cell-local spaces,
points, and weights are independent of the current nodal values. Positivity
of the weights is imposed in the results that use it. The exact reference
volume is $V_e=\mathcal I_e[\det J_{X,e}]>0$.

For $r\in\{Q,I\}$, put $\mathcal R_e^Q=\mathcal Q_e$ and
$\mathcal R_e^I=\mathcal I_e$. Define the two cell-volume changes and their
assembled vectors by
\begin{equation}
 q_{h,e}^r(u)=\mathcal R_e^r[\det J_{y,e}-\det J_{X,e}],
 \qquad
 q_h^r(u)=(q_{h,1}^r(u),\ldots,q_{h,m}^r(u))^T.
 \label{eq:global-volume-maps}
\end{equation}
We call $q_h^Q$ the raw volume map and $q_h^I$ the exact-volume comparison.
Both measure changes from the same reference configuration. Their feasible
sets impose cell-average volume preservation, which is distinct from the
pointwise equation $\det F_e=1$.

Determinants are classical null Lagrangians: their exact integrals are
determined by boundary traces \citep{ball1976convexity,olver1988null}.
For completeness, the cofactor is the matrix of signed second-order minors,
defined polynomially on all of $\mathbb M$. For invertible $F$,
$\cof F=(\det F)F^{-T}$. The row-wise Piola identity and the determinant's
homogeneity give
\[
 \det D_\xi y_e
 =\tfrac13\operatorname{div}_\xi\bigl((\cof D_\xi y_e)^Ty_e\bigr).
\]
The normal cofactor flux depends on tangential derivatives of the boundary
trace. Opposite orientations cancel internal-face contributions, and the
complete outer clamp therefore gives
\begin{equation}
 \mathbf1_m^Tq_h^I(u)=0,
 \label{eq:general-exact-volume-relation}
\end{equation}
where $\mathbf1_m$ is the vector of $m$ ones. The same argument shows that a
cell-interior displacement with zero trace on every cell face leaves each
exact cell volume unchanged. These signed polynomial identities also hold
outside the orientation-preserving neighborhood whenever the fixed
finite-element functions make the integrals well defined.

\subsection{The common polarized defect}

For $H,L\in\mathbb M$, define the cofactor polarization
$H\times L=\cof(H+L)-\cof H-\cof L$; it is bilinear and symmetric, and
$H\times H=2\cof H$. For three vector fields on a parent cell, define
\begin{equation}
 \mathcal T_e(z;u,v)
 =\mathcal L_e\bigl[D_\xi z:(D_\xi u\times D_\xi v)\bigr].
 \label{eq:common-polarized-defect}
\end{equation}
This trilinear form is symmetric in its three fields. If
$\Delta_e(y)=\mathcal L_e[\det D_\xi y]$, determinant polarization gives
\[
 D^2\Delta_e(y)[u,v]=\mathcal T_e(y;u,v),\qquad
 D^3\Delta_e(y)[z,u,v]=\mathcal T_e(z;u,v),\qquad
 \Delta_e(y)=\tfrac16\mathcal T_e(y;y,y).
\]
These are the coefficient identities of a cubic polynomial. They hold
independently of the spatial polynomial degree: increasing interpolation
order changes spatial moments, while the determinant remains cubic in
three-dimensional displacement coefficients.

The companion study MORTIS uses the same pairing with a fixed auxiliary
potential in its first slot \citep{liu2026twominor}. Indeed, the cofactor
functional's cubature defect $\mathcal L_e[D_\xi z:\cof D_\xi y]$ has Hessian
$\mathcal T_e(z;u,v)$. Here the current deformation occupies that slot,
and the question concerns the nonlinear constraints themselves.

\subsection{Feasibility and the complete pressure energy}

Define the algebraic feasible sets
$\mathcal F_h^r=\{u\in\mathcal U_h:q_h^r(u)=0\}$.
For any norm $\|\cdot\|_*$, write
$\operatorname{dist}_*(u,\mathcal F)=\inf_{v\in\mathcal F}\|u-v\|_*$.
At a feasible state, the kernel of the volume Jacobian is the linearized
feasible space. Tangent directions to actual feasible configurations are
limits of $(u_j-u_0)/t_j$ with $q_h^r(u_j)=0$ and $t_j\downarrow0$.
The two sets of directions agree under appropriate regularity assumptions
\citep{dontchev2014implicit}. A cubature-induced failure of that agreement
is one of the mechanisms examined below.

For physical interpretation, we use a small strain neighborhood
$\|\nabla_Xu\|_\infty<\rho<1$. A zero-trace field extends by zero to
$\R^3$ with the same gradient bound. Thus identity plus that extension is
globally one-to-one, with lower Lipschitz constant at least $1-\rho$.
The homotopy $I_3+t\nabla_Xu$ stays nonsingular for $0\le t\le1$, proving
positive orientation. The later correction theorem uses a smaller input
neighborhood and a specified larger output neighborhood.

Let $\mathsf V=\operatorname{diag}(V_1,\ldots,V_m)$ and define
$\|q\|_{\mathsf V^{-1}}=(q^T\mathsf V^{-1}q)^{1/2}$.
For bulk modulus $\kappa>0$, stationary elimination of one pressure
$p_e$ per cell from
\[
 E_h(u)+p^Tq_h^r(u)-\frac1{2\kappa}p^T\mathsf Vp
\]
gives $p=\kappa\mathsf V^{-1}q_h^r(u)$ and the complete reduced energy
\begin{equation}
 E_{h,\kappa}^r(u)
 =E_h(u)+\frac\kappa2 q_h^r(u)^T\mathsf V^{-1}q_h^r(u).
 \label{eq:general-penalty-energy}
\end{equation}
The zero-bulk case is defined by omitting the penalty. All $m$ pressure
equations are retained, including the mean. The principal
examples have exact reference volumes under both rules, so their pressure
weights coincide. More general fixed positive weights are admitted by the
abstract penalty theorem.

The background energy $E_h$ and its material quadrature remain identical
between the raw and exact-volume comparisons. The nonlinear FE response uses
the isochoric neo-Hookean density
\[
 W_{\rm iso}(F)=\frac\mu2\bigl((\det F)^{-2/3}F:F-3\bigr),\qquad \mu>0,
\]
where $\mu$ is the shear modulus and isochoric means invariant under positive
uniform dilation. Its volume--distortion separation follows the classical
Flory construction \citep{flory1961thermodynamic}. The quadratic
driving-energy examples are identified separately. Every finite-state
Hessian differentiates the complete energy \eqref{eq:general-penalty-energy},
including the pressure-geometric term derived later.

\section{Sharp tensor volume defects}
\label{sec:tensor-admission}

We construct an interior displacement that changes quadrature volume
while leaving every face fixed. Degree counting gives an exactness
threshold; the explicit triple below proves its converse and quantifies
the surviving cubic. These same directions will give the feasible-distance
bound and the additional assembled constraint.

Use the parent cube \(\widehat K=[-1,1]^3\), whose volume is eight.
For \(\boldsymbol p=(p_1,p_2,p_3)\), let
\(\mathbb Q_{\boldsymbol p}\) be the scalar polynomials of degree at most
\(p_i\) in coordinate \(\xi_i\). Write \(\mathbb Q_p\) for equal orders.
The rule \(\mathcal G_{\boldsymbol n}\) has \(n_i\) Gauss--Legendre points
in coordinate \(i\), with integral weights, so
\(\mathcal G_{\boldsymbol n}[1]=8\).
It integrates every tensor polynomial of coordinate degree at most
\(2n_i-1\) exactly. The isotropic rule is denoted \(\mathcal G_n\).

The vector cell-interior space is
\[
\mathcal B_{\boldsymbol p}
=\bigl[\mathbb Q_{\boldsymbol p}\cap H^1_0(\widehat K)\bigr]^3.
\]
Its fields vanish on all six faces. For \(\widehat u\in\mathcal B_{\boldsymbol p}\),
define the scalar cubic
\[
P_{\boldsymbol p,\boldsymbol n}(\widehat u)
=\mathcal G_{\boldsymbol n}[\det D_\xi\widehat u].
\]
Its exact-integral counterpart is zero by the fixed-boundary volume identity.
Throughout this section assume \(p_i\ge1\) and \(n_i\ge p_i+1\).
The latter condition integrates all terms with at most two displacement
gradients in \(\det(I_3+D_\xi\widehat u)\).
Consequently
\begin{equation}
\mathcal G_{\boldsymbol n}[\det(I_3+D_\xi\widehat u)]-8
=P_{\boldsymbol p,\boldsymbol n}(\widehat u).
\label{eq:bubble-pure-volume-cubic}
\end{equation}

\begin{theorem}[Supported cubic admission]
\label{thm:tensor-volume-admission}
Under the stated Gauss-order assumptions,
\(P_{\boldsymbol p,\boldsymbol n}\) is not identically zero on
\(\mathcal B_{\boldsymbol p}\) if and only if
\begin{equation}
p_i\ge3\quad(i=1,2,3),
\qquad
2n_i<3p_i\quad\hbox{for at least one coordinate }i.
\label{eq:tensor-bubble-admission}
\end{equation}
For an admitted coordinate, there are three component-separated
\(W_1,W_2,W_3\in\mathcal B_{\boldsymbol p}\) such that
\begin{equation}
P_{\boldsymbol p,\boldsymbol n}
   (a_1W_1+a_2W_2+a_3W_3)
=\chi_{n_i}a_1a_2a_3,
\label{eq:tensor-witness-polynomial}
\end{equation}
where
\begin{equation}
\chi_n=
6\left(\frac{32}{105}\right)^2
\left(\frac{2^n}{\binom{2n}{n}}\right)^2>0.
\label{eq:gauss-witness-coefficient}
\end{equation}
\end{theorem}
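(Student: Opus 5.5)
The plan has three parts: prove sufficiency of exactness by degree counting, treat low transverse order by an exact one-dimensional cancellation, and prove the converse with an explicit product-form triple whose determinant separates coordinatewise.

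\emph{Exactness when $2n_i\ge3p_i$ for all $i$.} Every field in $\mathcal B_{\boldsymbol p}$ has components in $\mathbb Q_{\boldsymbol p}$. Each term of $\det D_\xi\widehat u$ is a product of three partial derivatives, with exactly one derivative taken in each coordinate. So in $\xi_i$ the integrand has degree at most $3p_i-1$. When $2n_i\ge3p_i$ for every $i$, the rule $\mathcal G_{\boldsymbol n}$ integrates the integrand exactly. Then $P_{\boldsymbol p,\boldsymbol n}$ equals the exact integral, which vanishes by the fixed-boundary volume identity (\ref{eq:general-exact-volume-relation}) applied to a single cell.

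\emph{Cancellation when some $p_j\le2$.} If $p_j=1$, then $\mathcal B_{\boldsymbol p}=\{0\}$. If $p_j=2$, every field has the form $\widehat u=\phi(\xi_j)v(\xi')$, where $\phi=1-\xi_j^2$ and $v$ does not depend on $\xi_j$. The $j$-th column of $D_\xi\widehat u$ is $\phi'v$ and the other columns are $\phi\,\partial_kv$. Hence
\[
\det D_\xi\widehat u=\phi^2\phi'\det[v,\partial_kv,\partial_lv].
\]
The one-dimensional factor $\phi^2\phi'$ has degree $5\le2n_j-1$, so the $n_j$-point Gauss rule integrates it exactly. Its integral is $[\phi^3/3]_{-1}^{1}=0$. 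Because $\mathcal G_{\boldsymbol n}$ is a tensor rule, applying the $\xi_j$ rule first kills the integrand pointwise in the remaining variables, so $P\equiv0$. This is the ``low transverse order'' cancellation mentioned in the introduction.

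\emph{Converse construction.} Take component-separated fields $W_k=w_ke_k$ with scalar bubbles $w_k\in\mathbb Q_{\boldsymbol p}\cap H^1_0$. The displacement gradient then has rows $a_k\nabla w_k$, so
\[
P(a_1W_1+a_2W_2+a_3W_3)=a_1a_2a_3\,\mathcal L[\det(\nabla w_1,\nabla w_2,\nabla w_3)],
\]
using that the exact integral vanishes. This gives the form (\ref{eq:tensor-witness-polynomial}) with $\chi=\mathcal L[\det\nabla w]$.

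I would choose separated products $w_k=\prod_lf_{k,l}(\xi_l)$ with the following properties:
\begin{itemize}
\item in the two non-admitted coordinates, the factors are fixed cubic bubbles such as $1-\xi^2$ and $\xi(1-\xi^2)$, available because $p\ge3$;
\item in the admitted coordinate $i$, the factors are bubbles of degree at most $p_i$.
\end{itemize}
With these choices the permutation expansion of the determinant becomes a short sum of tensor products of one-dimensional polynomials. The transverse factors have degree low enough to be integrated exactly, and their integrals produce the factor $(32/105)^2$. Writing $\mathcal L=\mathcal G\otimes\mathcal G\otimes\mathcal G-\mathcal I\otimes\mathcal I\otimes\mathcal I$ with exact transverse factors, the defect reduces to the one-dimensional Gauss error in $\xi_i$ of a polynomial $g$ of degree at most $3p_i-1$.

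The target is to make $g$ a nonzero multiple of a polynomial of degree exactly $2n_i$, plus lower-degree terms. This is possible precisely because $2n_i\le3p_i-1$. The one-dimensional error then equals the leading coefficient times the classical Gauss error on $\xi^{2n}$. Up to sign, that error is $\int\pi_n^2$ for the monic Legendre polynomial $\pi_n$, and it is proportional to $(2^n/\binom{2n}{n})^2$. The remaining sign and the factor $6$ come from the permutation count and from choosing the signs of the $w_k$ so that $\chi_n>0$.

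\emph{Main obstacle.} The delicate step is the bookkeeping in the admitted coordinate. The permutation sum contributes several terms of the form $f'ff$ with different placements of the derivative, and these must not cancel at degree $2n_i$. Each factor must also stay of degree at most $p_i$ and vanish at $\pm1$, and the construction must work for every pair with $n_i\ge p_i+1$ and $2n_i<3p_i$.

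I would first try taking all three $\xi_i$ factors equal to a single bubble $\psi$ of degree $p_i$, possibly of reduced degree chosen so that $\deg\psi^2\psi'$ lands exactly on $2n_i$. In this choice the transverse factors are what distinguish the three fields. If the leading term still cancels, I would instead use distinct degrees $d_1+d_2+d_3-1=2n_i$ with $d_k\le p_i$. In that case the derivative-placement sum has leading coefficient proportional to $d_1+d_2+d_3\ne0$ after the transverse integrals fix the relative weights. Finally, I would evaluate the transverse integrals explicitly to confirm the constant in (\ref{eq:gauss-witness-coefficient}).
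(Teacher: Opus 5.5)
Your necessity half is complete and matches the paper. For $p_j=2$, your exactness argument ($\phi^2\phi'$ has degree $5\le 2n_j-1$ and integral $[\phi^3/3]_{-1}^{1}=0$) is an equally valid substitute for the paper's parity argument. The converse follows the paper's route: component-separated products, exact transverse integration, and a one-dimensional Gauss error at degree exactly $2n_i$. However, its decisive step is only asserted. Your claim that the derivative placements combine into a leading coefficient proportional to $d_1+d_2+d_3$ requires two facts you do not establish. First, the transverse integrals must give $f_1'f_2f_3$, $f_1f_2'f_3$ and $f_1f_2f_3'$ the \emph{same} weight, so that the $\xi_i$-integrand is $c\,(f_1f_2f_3)'$. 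Second, you need $c\ne0$.

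The missing observation is the following. For $w_k=f_k(x)g_k(y)h_k(z)$, let $Y_s=\int g_s'\prod_{k\ne s}g_k$ and $Z_s=\int h_s'\prod_{k\ne s}h_k$ be the exact transverse moments. The weight of placement $r$ is then $(Y\times Z)_r$. Since $\sum_sY_s=\sum_sZ_s=0$ (integrals of derivatives of products vanishing at $\pm1$), $Y$ and $Z$ lie in $\mathbf 1^\perp$, which forces $Y\times Z\parallel\mathbf 1$. Nonvanishing also needs $Y,Z$ nonzero and non-parallel, and natural choices fail. Taking all transverse factors equal to $1-t^2$ gives $Y=0$ by parity. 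Identical patterns in $y$ and $z$ give $Y=Z$ and $c=0$. Exactness is not automatic either: three factors $t(1-t^2)$ in one coordinate give degree $8>2n_j-1$ when $n_j=4$.

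The paper's choice settles all of this. Put one factor $B=t(1-t^2)$ in each transverse coordinate, in different fields: $(g_k)=(A,B,A)$ and $(h_k)=(A,A,B)$ with $A=1-t^2$. The moments then have degree at most six, $Y=C(-1,2,-1)$, $Z=C(-1,-1,2)$ with $C=32/105$, and $Y\times Z=3C^2\mathbf 1$. In $x$, take $f_1=x^{d-2}(1-x^2)$ and $f_2=f_3=x^{p-2}(1-x^2)$ with $d=2n-2p+1$. This $d$ lies in $[3,p]$ precisely because $n\ge p+1$ and $2n<3p$. Then $f_1f_2f_3=x^{2n-5}(1-x^2)^3$ has degree exactly $2n+1$ and vanishes at $\pm1$, so the Gauss value is $3C^2(2n+1)\int_{-1}^1\pi_n^2=\chi_n$.

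Thus the factor $6$ is $3C^2\cdot 2$, coming from this cross product and the Legendre normalization, not from a permutation count. In any case the constant is only a normalization, since rescaling $W_1$ turns any nonzero coefficient into $\chi_{n_i}$. Your first attempt, with three equal factors $\psi$, does not work in general. The condition $3\deg\psi-1=2n$ requires $3\mid 2n+1$, which fails e.g.\ for the admitted pair $(p,n)=(4,5)$. Once the degree overshoots $2n$, the error is no longer determined by a single leading coefficient.
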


\begin{proof}
We first prove the vanishing cases. A scalar polynomial zero on both faces
\(\xi_i=\pm1\) is divisible by \(1-\xi_i^2\). Thus if \(p_i=1\), the
entire cell-interior space is zero.
If \(p_i=2\), every vector field has the form
\[
\widehat u(\xi)=A(\xi_i)v(\xi_j,\xi_k),
\qquad A(t)=1-t^2,
\]
where \(\{i,j,k\}=\{1,2,3\}\).
The determinant of its gradient has a factor \(A(\xi_i)^2A'(\xi_i)\);
the remaining factor is independent of \(\xi_i\).
That factor is odd, so the symmetric Gauss rule gives zero.

Each term of \(\det D_\xi\widehat u\) differentiates once in every
coordinate across its three factors. Its degree in coordinate \(i\) is
therefore at most \(3p_i-1\).
If \(2n_i\ge3p_i\) in all coordinates, the rule is exact for this
determinant, whose integral is zero. These observations prove necessity.

For sufficiency, a simultaneous permutation of coordinate axes and vector
components lets us take the admitted coordinate to be the first.
Put \(p=p_1\), \(n=n_1\), and define
\begin{equation}
\begin{gathered}
A(t)=1-t^2,\qquad B(t)=t(1-t^2),\qquad C=\frac{32}{105},\\
d=2n-2p+1,\qquad
f(x)=x^{d-2}(1-x^2),\qquad
g(x)=x^{p-2}(1-x^2).
\end{gathered}
\label{eq:tensor-witness-factors}
\end{equation}
The inequalities \(n\ge p+1\) and \(2n<3p\) give \(3\le d\le p\).
For \(\xi=(x,y,z)\), choose
\begin{equation}
\begin{aligned}
W_1&=(f(x)A(y)A(z),0,0),\\
W_2&=(0,g(x)B(y)A(z),0),\\
W_3&=(0,0,g(x)A(y)B(z)).
\end{aligned}
\label{eq:tensor-supported-directions}
\end{equation}
The axial degrees are admitted by \(d\le p\), and the transverse degrees
are at most three. Every factor vanishes at its relevant endpoints.

Let \(w=W_1+W_2+W_3\).
The two transverse moments needed for its determinant are
\[
\int_{-1}^1A^2B'=2C,\qquad
\int_{-1}^1AA'B=-C.
\]
Expanding the determinant and integrating in \(y,z\) gives
\begin{align*}
\int_{-1}^1\!\!\int_{-1}^1\det D_\xi w\,dy\,dz
&=\bigl(4C^2-C^2\bigr)f'g^2+6C^2fgg'\\
&=3C^2(fg^2)'.
\end{align*}
All transverse polynomials here have degree at most six. The transverse
orders are at least four, so their Gauss rules give the same expression.
Moreover,
\[
fg^2=x^{2n-5}(1-x^2)^3.
\]
Its derivative has degree \(2n\) and leading coefficient \(-(2n+1)\).

Let \(\pi_n\) denote the monic Legendre polynomial, and let
\(\mathcal G_n^{(1)}\) be the one-dimensional \(n\)-point Gauss rule.
The polynomial \(x^{2n}-\pi_n^2\) has degree at most \(2n-1\), while
\(\pi_n\) vanishes at each Gauss node. Hence
\[
\left(\mathcal G_n^{(1)}-\int_{-1}^1\right)x^{2n}
=-\int_{-1}^1\pi_n^2.
\]
Since \(fg^2\) vanishes at both endpoints,
\[
\mathcal G_{\boldsymbol n}[\det D_\xi w]
=3C^2(2n+1)\int_{-1}^1\pi_n^2.
\]
The monic Legendre normalization is
\[
\int_{-1}^1\pi_n^2
=\frac2{2n+1}
  \left(\frac{2^n}{\binom{2n}{n}}\right)^2.
\]
This proves \eqref{eq:gauss-witness-coefficient}.
Each \(W_i\) occupies one component row, so multiplication by the three
amplitudes multiplies the determinant by \(a_1a_2a_3\).
This gives \eqref{eq:tensor-witness-polynomial} and sufficiency.
Finally, the linear and quadratic displacement terms have coordinate
degree at most \(2p_i\). Their exactness, together with the fixed boundary
trace, proves \eqref{eq:bubble-pure-volume-cubic}.
\end{proof}

For isotropic \(p\ge3\) and \(n\ge p+1\), this gives the sharp supported
threshold \(2n<3p\). When \(2n\ge3p\), degree counting integrates the
determinant of every \(\mathbb Q_p^3\) map exactly.
In particular Gauss5 removes the Q3 determinant-volume defect.
For anisotropic spaces, the transverse orders matter: a fully clamped
\(\mathbb Q_{(3,2,2)}\) cell with Gauss orders \((4,3,3)\) has zero
supported cubic despite \(2n_1<3p_1\).
The theorem classifies the cell-interior space. A failure of this admission
condition gives no conclusion about other motions of an assembled mesh.

\subsection{Physical directions and their volume coefficient}

Let \(X_e(\xi)=b_e+A_e\xi\) be an orientation-preserving affine cell map.
Then \(V_e=8\det A_e\).
Push the reference directions forward as
\[
U_{e,i}(X_e(\xi))=A_eW_i(\xi),
\]
with zero extension outside the cell.
Determinant multiplicativity gives
\begin{equation}
q_{h,e}^Q\!\left(\sum_i a_iU_{e,i}\right)
=\gamma_ea_1a_2a_3,\qquad
q_{h,e}^I\!\left(\sum_i a_iU_{e,i}\right)=0,\qquad
\gamma_e=\frac{V_e}{8}\chi_n.
\label{eq:physical-bubble-coefficient}
\end{equation}
At any starting field, each direction supplies a rank-one parent-gradient
update. The cell-volume polynomial remains affine in each amplitude separately,
called \emph{tri-affine},
with the same \(\gamma_e\); its lower-order coefficients change with the
starting field.

\section{Constructive nonlinear feasibility}
\label{sec:tri-affine}

The tensor directions have a nonzero mixed cubic coefficient even when
the starting field is deformed. We now use that coefficient to construct
a zero of the cell equation, then combine independent cell corrections.
A nearby zero bounds distance to the feasible set. Since the directions
vanish on every face, the construction leaves exact cell volumes fixed.

A tri-affine polynomial is affine in each of its three amplitudes
separately. Its lower-order coefficients may depend on the starting
deformation. The following lemma controls the correction using only
the residual and the mixed coefficient.

Scalar trilinear polynomials are harmonic in their product coefficient
coordinates: their pure second derivatives vanish. Degree-based local
residual--distance estimates for harmonic polynomials are established
\citep[Theorem~3.1]{badger2017harmonic}. The following direct construction
supplies a coefficient-dependent bound and a correction using prescribed
directions. These features allow its application to individual cell equations.

\begin{lemma}[Four-corner correction]
\label{lem:tri-affine-four-corner}
Let \(a=(a_1,a_2,a_3)\in\mathbb R^3\), and let
\[
f(a)=r+\sum_{i=1}^3\ell_i a_i
       +\sum_{1\le i<j\le3}b_{ij}a_i a_j+\gamma a_1a_2a_3,
\qquad \gamma\ne0,
\]
where \(r,\ell_i,b_{ij},\gamma\) are real coefficients.
There is a zero \(a_*\) of \(f\) satisfying
\begin{equation}
\|a_*\|_\infty\le\left|\frac r\gamma\right|^{1/3},
\qquad
\|a\|_\infty=\max_i|a_i|.
\label{eq:four-corner-amplitude}
\end{equation}
For \(r\ne0\), the zero lies on a segment joining the origin to one of four
corners of the cube with this radius.
\end{lemma}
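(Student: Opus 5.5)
The plan is to reduce the lemma to a one-dimensional intermediate value argument along segments from the origin. The case \(r=0\) is immediate, since \(a_*=0\) is a zero of \(f\). So assume \(r\ne0\) and set
\[
 s=\left|\frac r\gamma\right|^{1/3}>0 .
\]
I will show that the values of \(f\) at four suitably chosen corners of the cube \([-s,s]^3\) average to zero. Then at least one of those corners carries a value that vanishes or has sign opposite to \(r=f(0)\).

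\emph{The averaging identity.} For a sign \(\varepsilon\in\{\pm1\}\), consider the four corners \(s\sigma\) with \(\sigma\in\{\pm1\}^3\) and \(\sigma_1\sigma_2\sigma_3=\varepsilon\). Explicitly, these are
\[
 \sigma\in\{(1,1,\varepsilon),\,(1,-1,-\varepsilon),\,(-1,1,-\varepsilon),\,(-1,-1,\varepsilon)\}.
\]
Over these four corners the contributions cancel in three ways.
\begin{itemize}
 \item Each coordinate \(\sigma_i\) sums to zero, so the linear terms \(\ell_ia_i\) vanish on average.
 \item For \(i<j\), with \(k\) the remaining index, \(\sigma_i\sigma_j=\varepsilon\sigma_k\). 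These products therefore also sum to zero, and the bilinear terms \(b_{ij}a_ia_j\) vanish on average.
 \item The product \(\sigma_1\sigma_2\sigma_3=\varepsilon\) is constant on the four corners.
\end{itemize}
Hence
\[
 \frac14\sum_{\sigma_1\sigma_2\sigma_3=\varepsilon} f(s\sigma)=r+\gamma\varepsilon s^3 .
\]
Now choose \(\varepsilon=-\operatorname{sign}(r\gamma)\). Since \(\gamma s^3=\operatorname{sign}(\gamma)\,|r|\), this choice gives \(\gamma\varepsilon s^3=-r\), so the four-corner average equals zero.

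\emph{Conclusion.} The four values have zero mean, so they cannot all have the strict sign of \(r\). Hence there is a corner \(c=s\sigma\) with \(r\,f(c)\le0\). Consider \(\phi(t)=f(tc)\) on \([0,1]\). It is a continuous polynomial with \(\phi(0)=r\) and \(\phi(1)=f(c)\), so \(\phi(0)\phi(1)\le0\). The intermediate value theorem then gives \(t_*\in(0,1]\) with \(\phi(t_*)=0\). Set \(a_*=t_*c\). This point lies on the segment from the origin to one of the four corners, and \(\|a_*\|_\infty=t_*s\le s\), which is \eqref{eq:four-corner-amplitude}.

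The only genuinely nontrivial step is selecting the four-corner set so that every lower-order coefficient cancels, regardless of the values of \(\ell_i\) and \(b_{ij}\). The parity identity \(\sigma_i\sigma_j=\varepsilon\sigma_k\) is what makes this cancellation work, so I will verify it carefully. After that, only routine sign bookkeeping for the choice of \(\varepsilon\) remains.
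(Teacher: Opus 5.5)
Your proposal is correct and follows the paper's proof essentially step for step: the same radius \(|r/\gamma|^{1/3}\), the same choice \(\epsilon=-\operatorname{sign}(r\gamma)\), and the same four sign vectors with triple product \(\epsilon\), whose first and pairwise moments vanish so that the corner average is \(r+\epsilon\gamma\eta^3=0\). Your intermediate-value argument along the segment from the origin to a corner where \(f\) has the weak opposite sign to \(r\) is also the paper's, and your explicit check of the parity identity \(\sigma_i\sigma_j=\epsilon\sigma_k\) fills in a detail the paper states without computation.
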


\begin{proof}
If \(r=0\), take \(a_*=0\).
Otherwise put \(\eta=|r/\gamma|^{1/3}\) and
\(\epsilon=-\operatorname{sign}(r\gamma)\). Consider the four sign vectors
\[
\mathcal S_\epsilon=
\{\sigma\in\{-1,1\}^3:\sigma_1\sigma_2\sigma_3=\epsilon\}.
\]
Their first and pairwise signed moments vanish. Their common triple product
is \(\epsilon\), so
\[
\frac14\sum_{\sigma\in\mathcal S_\epsilon}f(\eta\sigma)
=r+\epsilon\gamma\eta^3=0.
\]
At least one corner has the opposite weak sign to \(f(0)=r\).
Continuity on the segment from zero to that corner gives a root.
Every point of the segment satisfies \eqref{eq:four-corner-amplitude}.
\end{proof}

Only the residual \(r\) and the mixed coefficient \(\gamma\) enter the bound.
The linear and quadratic coefficients determine which corner supplies the
segment and where the root lies. Thus the construction remains available
at a deformed starting field, where those lower-order coefficients generally
are nonzero. In computation it requires four polynomial evaluations and a
scalar root on a known interval.

For completeness, the same argument gives a global bound for a scalar
trilinear form \(P:X_1\times X_2\times X_3\to\mathbb R\), where the
\(X_i\) are finite-dimensional real Hilbert spaces. Equip their product with
the sum-of-squares norm. Choose unit vectors \(e_i\in X_i\) with
\(\gamma=P(e_1,e_2,e_3)\ne0\).
Applying the lemma to \(P(x_1+a_1e_1,x_2+a_2e_2,x_3+a_3e_3)\) yields
\begin{equation}
\operatorname{dist}\bigl(x,P^{-1}(0)\bigr)
\le\sqrt3\,|\gamma|^{-1/3}|P(x)|^{1/3}.
\label{eq:scalar-trilinear-error-bound}
\end{equation}
The same estimate holds for a block-tri-affine scalar polynomial with this
top trilinear part. A known nonzero coefficient therefore suffices;
the construction requires no optimization of the tensor norm.

\subsection{Why a system needs additional structure}

Individual scalar bounds do not ensure that one correction satisfies several
coupled equations. An explicit example also shows why eliminating regular
equations can increase the relevant order. Split six real variables into
the blocks \((a,x),(b,y),(c,z)\), and define
\begin{equation}
\mathcal R(a,x,b,y,c,z)=
\begin{pmatrix}
xbc-ayz\\
ayc-abz\\
xyz
\end{pmatrix}.
\label{eq:coupled-trilinear-example}
\end{equation}
Each component is trilinear in the three blocks.
At the feasible point \(w_0=(1,0,1,0,1,0)\), use the path
\[
w(t)=(1,t^2,1,t,1,t).
\]
Its residual is \(\mathcal R(w(t))=(0,0,t^4)^T\).
Near \(w_0\), the coefficients \(a,b,c\) remain nonzero. The first two
equations give
\[
y=\frac{bz}{c},\qquad x=\frac{az^2}{c^2}.
\]
The remaining equation becomes \(abz^4/c^3=0\).
Hence the local zero set has \(x=y=z=0\), with \(a,b,c\) free.
The distance from \(w(t)\) to that set is
\(\sqrt{2t^2+t^4}\). For small \(t\), a closest point of the full zero set
also lies in this neighborhood, since \(w_0\) already gives an
order-\(|t|\) competitor. Thus the residual is of order \(t^4\) while
the distance is of order \(|t|\); a cube-root upper estimate fails.

The cellwise argument below supplies the additional structure missing from
this example. Each scalar volume equation receives directions supported
inside its own cell, and those directions leave all other equations unchanged.

\subsection{Cellwise correction in the full displacement space}
\label{sec:cellwise-feasibility}

Independent cell support lets the scalar corrections solve all volume
equations simultaneously. The starting field may move interior faces;
only the corrections have zero traces there.
We first state the general estimate. The tensor directions from
\cref{sec:tensor-admission} satisfy its quantitative hypotheses, as
verified in \cref{sec:tensor-scaling}.

Let \(\mathcal T_h=\{K_e\}_{e=1}^{m_h}\) partition a fixed bounded Lipschitz
domain \(\Omega\), let \(V_e=|K_e|>0\), and put
\(\mathsf V=\operatorname{diag}(V_1,\ldots,V_{m_h})\).
Use the physical norms and volume-weighted residual norm of
\cref{sec:discrete-volume-setting}, with
\(|v|_{H^1(K)}^2=\int_K\|\nabla_Xv\|_F^2\).
The full conforming space is \(\mathcal U_h\subset H^1_0(\Omega;\mathbb R^3)\),
with a fixed Poincar\'e constant \(C_P\).
Maximum-gradient bounds use the Frobenius norm.

\begin{theorem}[Quantitative cellwise feasibility]
\label{thm:cellwise-feasibility}
Suppose each cell has three directions \(U_{e,i}\in\mathcal U_h\),
\(i=1,2,3\), supported in \(\overline K_e\) and zero on \(\partial K_e\).
For every starting field \(u\in\mathcal U_h\), assume that
\[
a\longmapsto q_{h,e}^Q\!\left(u+\sum_{i=1}^3a_iU_{e,i}\right)
\]
is tri-affine with a coefficient \(\gamma_e\) of \(a_1a_2a_3\) independent
of \(u\). These changes leave all other residual rows unchanged.
Assume common positive constants \(\gamma_0,L_2,L_\infty\) satisfy
\begin{align}
|\gamma_e|&\ge\gamma_0V_e,\label{eq:cellwise-coefficient-admission}\\
\left|\sum_i a_iU_{e,i}\right|_{H^1(K_e)}
&\le L_2\sqrt{V_e}\,\|a\|_\infty,\qquad
\left\|\nabla_X\sum_i a_iU_{e,i}\right\|_{L^\infty(K_e)}
\le L_\infty\|a\|_\infty .
\label{eq:cellwise-direction-bounds}
\end{align}
Define the algebraic feasible set
\(\mathcal F_h^Q=\{v\in\mathcal U_h:q_h^Q(v)=0\}\).
Every \(u\in\mathcal U_h\) has a correction \(\delta u\in\mathcal U_h\), zero on every
cell face, with \(u+\delta u\in\mathcal F_h^Q\) and
\begin{equation}
\operatorname{dist}_{H^1}(u,\mathcal F_h^Q)
\le\|\delta u\|_{H^1}
\le \sqrt{1+C_P^2}\,L_2\gamma_0^{-1/3}
|\Omega|^{1/3}\|q_h^Q(u)\|_{\mathsf V^{-1}}^{1/3}.
\label{eq:full-space-holder-bound}
\end{equation}
If the hypotheses hold on a mesh family with the displayed constants
independent of \(h\), the estimate is uniform in \(h\).
\end{theorem}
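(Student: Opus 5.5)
We apply the four-corner correction of \cref{lem:tri-affine-four-corner} cell by cell and then assemble the corrections, using the fact that their supports are disjoint apart from boundaries.

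\textbf{Cellwise roots.} Fix \(u\in\mathcal U_h\) and write \(r_e=q_{h,e}^Q(u)\). By hypothesis,
\[
f_e(a)=q_{h,e}^Q\Bigl(u+\sum_i a_iU_{e,i}\Bigr)
\]
is tri-affine, has constant term \(r_e\), and has mixed coefficient \(\gamma_e\ne0\). The lemma therefore yields \(a^{(e)}\) with \(f_e(a^{(e)})=0\) and \(\|a^{(e)}\|_\infty\le|r_e/\gamma_e|^{1/3}\le(|r_e|/(\gamma_0V_e))^{1/3}\). Set
\[
\delta u=\sum_e\sum_i a^{(e)}_iU_{e,i}.
\]
This field lies in \(\mathcal U_h\) and vanishes on every cell face.

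\textbf{Simultaneous feasibility.} Here we use the hypothesis that changes supported in \(K_e\) leave all other residual rows unchanged. Indeed, \(q_{h,e'}^Q\) depends only on \(u|_{K_{e'}}\), and each \(U_{e,i}\) vanishes outside \(\overline K_e\). Adding the cell corrections one at a time, the correction for \(K_e\) alters only row \(e\). The starting field seen by cell \(e\) is also unchanged by corrections from other cells, since their restrictions to \(K_e\) vanish. Hence row \(e\) of \(q_h^Q(u+\delta u)\) equals \(f_e(a^{(e)})=0\). The \(u\)-independence of \(\gamma_e\) means that no ordering issue arises, so \(u+\delta u\in\mathcal F_h^Q\).

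\textbf{Norm bound.} The gradients of the cell pieces have disjoint supports up to null sets, so
\[
|\delta u|_{H^1}^2=\sum_e\Bigl|\sum_ia_i^{(e)}U_{e,i}\Bigr|_{H^1(K_e)}^2\le L_2^2\sum_eV_e\|a^{(e)}\|_\infty^2\le L_2^2\gamma_0^{-2/3}\sum_eV_e^{1/3}\bigl(|r_e|/\sqrt{V_e}\bigr)^{2/3},
\]
using \(V_e(|r_e|/V_e)^{2/3}=V_e^{1/3}(r_e^2/V_e)^{1/3}\). Apply Hölder with exponents \(3/2\) and \(3\) to the products \(V_e^{1/3}\cdot(r_e^2/V_e)^{1/3}\):
\[
\sum_eV_e^{1/3}\bigl(r_e^2/V_e\bigr)^{1/3}\le\Bigl(\sum_eV_e^{1/2}\Bigr)^{2/3}\Bigl(\sum_e r_e^2/V_e\Bigr)^{1/3}.
\]
This would leave \(\sum_eV_e^{1/2}\), which is the wrong weight. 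The correct choice is exponents \(3/2\) and \(3\) with factors \(V_e^{2/3}\) and \((r_e^2/V_e)^{1/3}\). Rewrite \(V_e^{1/3}(|r_e|)^{2/3}\) as \(V_e^{2/3}\cdot(|r_e|^2/V_e)^{1/3}\), which is valid since \(V_e^{2/3}V_e^{-1/3}=V_e^{1/3}\). Hölder then gives
\[
\sum_e V_e^{1/3}|r_e|^{2/3}\le\Bigl(\sum_eV_e\Bigr)^{2/3}\Bigl(\sum_e r_e^2/V_e\Bigr)^{1/3}=|\Omega|^{2/3}\|q_h^Q(u)\|_{\mathsf V^{-1}}^{2/3}.
\]
Taking square roots yields
\[
|\delta u|_{H^1}\le L_2\gamma_0^{-1/3}|\Omega|^{1/3}\|q_h^Q(u)\|_{\mathsf V^{-1}}^{1/3}.
\]
Poincaré on the clamped space gives \(\|\delta u\|_{H^1}\le\sqrt{1+C_P^2}\,|\delta u|_{H^1}\). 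Since \(u+\delta u\) is feasible, the distance is at most \(\|\delta u\|_{H^1}\), which proves \eqref{eq:full-space-holder-bound}. Uniformity in \(h\) is immediate because only \(L_2,\gamma_0,C_P,|\Omega|\) appear. The \(L_\infty\) bound is not needed for this estimate; it serves the admissibility statements later.

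The main obstacle is this weighting step: choosing the Hölder split so that the \(\mathsf V^{-1}\)-norm appears with the mesh-independent factor \(|\Omega|^{1/3}\). The algebra of the roots is routine by comparison.
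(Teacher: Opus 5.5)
Your proposal is correct and follows the paper's proof essentially step for step: cellwise four-corner roots, locality of the cell rows to obtain simultaneous feasibility, H\"older with exponents $3/2$ and $3$ applied to $V_e^{2/3}(r_e^2/V_e)^{1/3}$, and Poincar\'e for the $L^2$ part. Delete the abandoned first attempt, though, because it rests on an arithmetic slip rather than a wrong H\"older choice: $V_e(|r_e|/V_e)^{2/3}=V_e^{1/3}|r_e|^{2/3}=V_e^{2/3}(r_e^2/V_e)^{1/3}$, whereas your displayed $V_e^{1/3}(|r_e|/\sqrt{V_e})^{2/3}$ and $V_e^{1/3}(r_e^2/V_e)^{1/3}$ both equal $|r_e|^{2/3}$.
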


\begin{proof}
For each cell put \(r_e=q_{h,e}^Q(u)\).
\Cref{lem:tri-affine-four-corner} gives amplitudes \(a_e\) for which the
cell residual vanishes and
\[
\|a_e\|_\infty
\le\left|\frac{r_e}{\gamma_e}\right|^{1/3}
\le\gamma_0^{-1/3}|r_e/V_e|^{1/3}.
\]
Use \(a_e=0\) when \(r_e=0\), and set
\(\delta u_e=\sum_i(a_e)_iU_{e,i}\).
Since each \(\delta u_e\) leaves all other rows unchanged,
\(\delta u=\sum_e\delta u_e\) makes every residual zero.
Its traces vanish on all cell faces.

The cell interiors are disjoint. Applying
\eqref{eq:cellwise-direction-bounds} and H\"older's inequality gives
\begin{align*}
|\delta u|_{H^1(\Omega)}^2
&=\sum_e|\delta u_e|_{H^1(K_e)}^2\\
&\le L_2^2\gamma_0^{-2/3}
       \sum_e V_e^{2/3}(r_e^2/V_e)^{1/3}\\
&\le L_2^2\gamma_0^{-2/3}
       \left(\sum_eV_e\right)^{2/3}
       \left(\sum_e r_e^2/V_e\right)^{1/3}.
\end{align*}
The Poincar\'e inequality supplies the \(L^2\) part of the full norm.
The constructed feasible point supplies the distance bound.
\end{proof}

The normalized coefficient bound \eqref{eq:cellwise-coefficient-admission}
is consequential. Nonzero coefficients on each individual mesh do not
alone control a refinement family. \Cref{sec:tensor-scaling} verifies the constants for the explicit
tensor directions.

\subsection{Admissible input and output neighborhoods}

The algebraic theorem has no small-displacement restriction. To obtain a
physically admissible corrected deformation, assume additionally that
\(\mathcal U_h\subset W^{1,\infty}(\Omega;\mathbb R^3)\) and that, for
\(0<\delta\le\delta_0\),
\begin{equation}
\|\nabla_Xu\|_{L^\infty}\le\delta
\quad\Longrightarrow\quad
\max_e\frac{|q_{h,e}^Q(u)|}{V_e}\le C_r\delta
\label{eq:cellwise-residual-growth}
\end{equation}
with constants \(C_r,\delta_0>0\) independent of \(h\).
Fix an output radius \(0<\rho_{\rm out}<1\), and choose a positive input
radius satisfying
\begin{equation}
\rho_{\rm in}\le
\min\left\{\delta_0,\frac{\rho_{\rm out}}2,\,
\frac{\gamma_0}{C_r}
\left(\frac{\rho_{\rm out}}{2L_\infty}\right)^3\right\}.
\label{eq:cellwise-admissibility-radii}
\end{equation}
For every input with maximum gradient at most \(\rho_{\rm in}\), the
correction in the theorem obeys
\[
\|\nabla_X\delta u\|_{L^\infty}
\le L_\infty(C_r\rho_{\rm in}/\gamma_0)^{1/3}
\le\rho_{\rm out}/2.
\]
Together with \(\rho_{\rm in}\le\rho_{\rm out}/2\), this places the corrected
field in the output neighborhood
\[
\mathcal F_h^Q(\rho_{\rm out})
=\{v\in\mathcal F_h^Q:
       \|\nabla_Xv\|_{L^\infty}\le\rho_{\rm out}\}.
\]
Consequently \eqref{eq:full-space-holder-bound} also holds with this
admissible set in the distance.

To justify injectivity, extend \(u+\delta u\) by zero outside \(\Omega\).
The complete outer clamp makes this extension a
\(W^{1,\infty}(\mathbb R^3;\mathbb R^3)\) field with Lipschitz constant at
most \(\rho_{\rm out}\). The map identity plus the extension has lower
Lipschitz constant \(1-\rho_{\rm out}>0\).
For each target point, the equation for its inverse is a contraction, so
the map is globally bijective. The matrices \(I+t\nabla_X(u+\delta u)\),
\(0\le t\le1\), are nonsingular; their determinants retain the positive
sign at \(t=0\). This proves orientation preservation as well.
The construction thus maps a smaller input neighborhood into a fixed
admissible output neighborhood. Smallness only in \(H^1\) would not supply
this maximum-gradient argument.

\subsection{Meaning of the corrected volumes}

Every cell-face trace is preserved by the correction. Exact determinant
volume depends only on that trace, so
\begin{equation}
q_h^I(u+\delta u)=q_h^I(u).
\label{eq:correction-preserves-exact-volume}
\end{equation}
The constructed field can therefore satisfy all raw volume equations while
retaining the starting field's nonzero exact cell-volume changes. This is a
feasibility property of the quadrature-defined equations. Its interpretation
as physical incompressibility would require a different constraint comparison.

The norm of this particular correction is an upper bound on feasible distance.
A closer raw-feasible field can change interior-face traces.
The fixed-trace construction and a closest-point projection consequently
answer different questions.

The theorem also applies when only some cells need correction, provided all
other rows are already zero and remain unchanged. An exactly integrated
cell has no boundary-zero direction that changes its volume. Arbitrary
residuals on a mixture of exact and underintegrated cells therefore require
an additional correction argument. No uniform bound for that extra step is
assumed here.

\subsection{Uniform physical bounds for the tensor construction}
\label{sec:tensor-scaling}

To verify the uniform norm hypotheses, set
\[
K_{\rm sh}=\sup_e\|A_e\|_2\|A_e^{-1}\|_2,\qquad
B_2^2=\sum_i\|D_\xi W_i\|_{L^2(\widehat K;F)}^2,\qquad
B_\infty^2=\sum_i\|D_\xi W_i\|_{L^\infty(\widehat K;F)}^2.
\]
The last two norms use the Frobenius matrix norm.
The reference gradients occupy disjoint component rows, so pointwise
\[
\left\|\sum_i a_iD_\xi W_i\right\|_F^2
=\sum_i a_i^2\|D_\xi W_i\|_F^2.
\]
The physical gradient is
\[
\nabla_X\sum_i a_iU_{e,i}
=A_e\left(\sum_i a_iD_\xi W_i\right)A_e^{-1}.
\]
Applying the matrix norm bound after the row identity and using
\(\det A_e=V_e/8\) gives the constants
\begin{equation}
\gamma_0=\frac{\chi_n}{8},\qquad
L_2=\frac{K_{\rm sh}B_2}{\sqrt8},\qquad
L_\infty=K_{\rm sh}B_\infty
\label{eq:tensor-cellwise-constants}
\end{equation}
in \Cref{thm:cellwise-feasibility}.
Thus, if every cell has the stated triple, any uniformly shape-regular
affine family of fixed admitted orders has the full-space bound
\eqref{eq:full-space-holder-bound} with an \(h\)-independent constant.
Its dependence on \(\chi_n\) and the bubble norms remains explicit;
no uniformity in polynomial order is asserted.

For the admissibility hypothesis, positive weights and
\(\|\nabla_Xu\|_\infty\le\delta<1\) give
\[
\frac{|q_{h,e}^Q(u)|}{V_e}
\le(1+\delta)^3-1\le7\delta.
\]
This verifies \eqref{eq:cellwise-residual-growth} with \(C_r=7\) on a
fixed small-strain interval, completing the physical application of the
general theorem.

The parent-volume normalization is important when comparing the Q3 formulas
below with unit-cube implementations.
Under \(\xi=2\eta-\mathbf1\), the unscaled functions
\(A(2\eta-1)=4\eta(1-\eta)\) and
\(B(2\eta-1)=4\eta(1-\eta)(2\eta-1)\) give the raw coefficient \(\chi_n\)
on \([0,1]^3\): three gradient factors of two cancel its volume factor.
The physical push-forward to that same cube has \(A_e=I_3/2\), halves
each displacement direction, and gives \(\gamma_e=\chi_n/8\).
Equation \eqref{eq:physical-bubble-coefficient} uses this latter convention.

\section{The assembled constraint geometry}
\label{sec:assembled-geometry}

The supported construction also identifies what changes in a mesh with free
interior faces. At an affine reference, the raw and exact maps agree through
their second derivatives. A remaining cubic term changes the number of
independent nonlinear constraints at nearby feasible states.
We prove that change, identify the cubic relation that survives elimination
of the independent linearized equations, and use it to establish sharpness
of the preceding distance estimate. The surviving relation is also the
input to the mechanical-response theorem in \cref{sec:singular-penalty}.

Consider a face-connected conforming mesh of \(m\) affine hexahedra.
Let \(\mathcal U_h\) be the full continuous \(\mathbb Q_p^3\) displacement
space, \(p\ge3\), with the complete outer clamp, and put
\(N=\dim\mathcal U_h\).
Cell rules have \(n_e\ge p+1\) Gauss points per coordinate.
Assume compatible face parameterizations, so the usual biquadratic
face-bubble traces and their two-cell extensions belong to \(\mathcal U_h\).
At least one cell \(e_*\) has \(2n_{e_*}<3p\).
Let \(U_1,U_2,U_3\) be its supported physical directions from
\eqref{eq:physical-bubble-coefficient}, and put
\[
\gamma_*=\frac{V_{e_*}}8\chi_{n_{e_*}},
\qquad
u_\tau=\tau(U_2+U_3).
\]
All \(m\) cell-volume equations are retained.

\begin{theorem}[Assembled rank change at feasible states]
\label{thm:assembled-volume-geometry}
At zero displacement, the raw and exact volume maps have equal values,
Jacobians and Hessians. Their common Jacobian
\[
B_h=Dq_h^Q(0)=Dq_h^I(0)\in\mathbb R^{m\times N}
\]
has rank \(m-1\).
The exact map has constant rank \(m-1\) in a neighborhood of zero.

For sufficiently small nonzero \(\tau\), the states \(u_\tau\) are admissible
and satisfy \(q_h^Q(u_\tau)=q_h^I(u_\tau)=0\), while
\[
\operatorname{rank}Dq_h^Q(u_\tau)=m.
\]
In fixed nodal and cell-coordinate Euclidean norms, the smallest of its
\(m\) singular values satisfies
\begin{equation}
\sigma_m\!\left(Dq_h^Q(u_\tau)\right)=\Theta(\tau^2).
\label{eq:feasible-quadratic-singular-value}
\end{equation}
The constants in this estimate concern the fixed mesh.
\end{theorem}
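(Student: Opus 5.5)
Four claims need proof: agreement at rest through second order, rank \(m-1\) of \(B_h\) with constant exact rank, feasibility of \(u_\tau\), and the full rank of \(Dq_h^Q(u_\tau)\) with the \(\Theta(\tau^2)\) singular value.

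\textbf{Agreement at rest.} For each cell, expand \(\det(J_{X,e}+D_\xi\widehat u_e)\). Since \(J_{X,e}=A_e\) is constant, this expansion has terms of degree zero, one and two in \(\widehat u_e\) with coordinate degree at most \(2p\). The hypothesis \(n_e\ge p+1\) integrates them exactly, as in \eqref{eq:bubble-pure-volume-cubic}. Hence \(q_{h,e}^Q-q_{h,e}^I=\mathcal L_e[\det D_\xi\widehat u_e]\) is a pure cubic, which already gives equal values, Jacobians and Hessians at zero.

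\textbf{Rank of \(B_h\).} Row \(e\) of \(B_h\) is \(v\mapsto\int_{K_e}\operatorname{div}v=\int_{\partial K_e}v\cdot n\). Relation \eqref{eq:general-exact-volume-relation} gives \(\mathbf 1_m^TB_h=0\), so the rank is at most \(m-1\). For the reverse inequality, take a shared interior face between cells \(e,e'\). The biquadratic face-bubble normal trace, extended into both cells, gives a field \(v\) with \(B_hv=c(\delta_e-\delta_{e'})\) and \(c\ne0\). Face connectivity makes these differences span \(\mathbf 1_m^\perp\), so the rank is \(m-1\). For the exact map, the identity \(\mathbf 1_m^Tq_h^I\equiv0\) holds for every \(u\), so \(\mathbf 1_m^TDq_h^I(u)=0\) and the rank is at most \(m-1\). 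Lower semicontinuity of rank gives at least \(m-1\) near zero, so the rank is constant.

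\textbf{Feasibility of \(u_\tau\).} For small \(\tau\), \(u_\tau\) lies in the small-strain neighbourhood, so it is admissible. Both directions are supported in \(K_{e_*}\) and vanish on its faces, so every exact volume is unchanged and \(q_h^I(u_\tau)=0\); the same support argument makes every raw row with \(e\ne e_*\) vanish. For the row \(e_*\), use \eqref{eq:physical-bubble-coefficient} with \(a=(0,\tau,\tau)\): its value is \(\gamma_*\cdot0\cdot\tau^2=0\).

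\textbf{Full rank and the singular value.} Write \(Dq_h^Q(u_\tau)=Dq_h^I(u_\tau)+D\mathcal C(u_\tau)\), where \(\mathcal C\) is the cubic defect map.
\begin{itemize}
\item \(D\mathcal C(u_\tau)=O(\tau^2)\), and \(Dq_h^I(u_\tau)=B_h+O(\tau)\) has rank exactly \(m-1\) with range \(\mathbf 1_m^\perp\). Its nonzero singular values therefore stay bounded below, and \(\sigma_m\le\|\mathbf 1_m^TDq_h^Q(u_\tau)\|/\sqrt m=O(\tau^2)\).
\item For the lower bound, let \(y\) be a unit vector. If \(|\mathbf 1_m^Ty|\) is small relative to \(\tau^2\), the component of \(y\) in \(\mathbf 1_m^\perp\) is order one. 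Then \(\|Dq_h^I(u_\tau)^Ty\|\) is order one, which dominates the \(O(\tau^2)\) perturbation.
\item Otherwise use the test direction \(U_1\). Since it is face-zero, \(Dq_h^I(u_\tau)U_1=0\), and \(Dq_h^Q(u_\tau)U_1=\partial_{a_1}q_{h,e_*}^Q\,\delta_{e_*}=\gamma_*\tau^2\delta_{e_*}\).
\item Combine the two cases with a standard Schur-complement estimate. Take the block of \(m-1\) directions realizing \(\mathbf 1_m^\perp\) together with \(U_1\). The compressed \(m\times m\) matrix is block-triangular up to \(O(\tau)\) off-diagonal couplings, and its determinant is \(\Theta(\tau^2)\) times an \(O(1)\) minor. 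This gives \(\sigma_m\ge c\tau^2\) and hence rank \(m\).
\end{itemize}

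The main obstacle is the lower bound. The off-diagonal terms \(\mathbf 1_m^TDq_h^Q(u_\tau)V\), with \(V\) the \(\mathbf 1_m^\perp\) directions, are themselves \(O(\tau^2)\). The \(O(\tau)\) entries of \(Dq_h^I(u_\tau)-B_h\) in the other block must not lower the \(\tau^2\) Schur complement. I would handle this by choosing the \(m-1\) directions as face bubbles independent of \(\tau\). I would then check that the Schur complement equals \(\gamma_*\tau^2/m+O(\tau^3)\), using that the \(\mathbf 1_m\)-row is exactly \(D\mathcal C\), a quadratic form in \(u_\tau\).
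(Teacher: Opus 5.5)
Your proposal is correct and follows the paper's proof essentially step for step. The pure-cubic raw-minus-exact difference gives agreement at rest, face-bubble incidence columns give $\operatorname{rank}B_h=m-1$, and the three-amplitude identity gives feasibility together with $Dq_h^Q(u_\tau)U_1=\gamma_*\tau^2\mathbf e_{e_*}$. The row $\mathbf 1_m$ gives the $O(\tau^2)$ upper bound, and the compressed matrix $M_\tau=Dq_h^Q(u_\tau)[v_1,\ldots,v_{m-1},U_1]$ with $|\det M_\tau|\ge c\tau^2$ gives the lower bound.

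The paper does not need your Schur-complement bookkeeping, nor your preliminary case split. That split on its own would miss unit vectors whose $\mathbf 1_m^\perp$ component is of order $\tau^2$. Instead the paper factors $\tau^2$ out of the exact last column, so the remaining matrix tends to the invertible $[B_hv_1,\ldots,B_hv_{m-1},\gamma_*\mathbf e_{e_*}]$. It then concludes with $\sigma_m(M_\tau)\ge|\det M_\tau|/\|M_\tau\|_2^{m-1}$ and $\sigma_m(M_\tau)\le\|[v_1,\ldots,v_{m-1},U_1]\|_2\,\sigma_m(Dq_h^Q(u_\tau))$.
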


\begin{proof}
For an affine cell, all determinant terms with at most two displacement
gradients have coordinate degree at most \(2p\). The rule integrates them
exactly. Thus each raw-minus-exact cell map is homogeneous cubic in \(u\),
giving agreement of the first two derivatives at zero.
The exact total-volume identity gives
\(\mathbf1_m^TB_h=0\), so the rank is at most \(m-1\).

To prove the reverse inequality, take an interior face shared by two cells.
Its scalar biquadratic bubble is the product of the two tangential factors
\(1-\xi_j^2\). Multiply this trace by a constant physical face normal,
and extend it into both neighboring cells with a linear factor that vanishes
on the opposite face. The other traces vanish because of the tangential
bubble factors. These extensions are conforming members of
\(\mathcal U_h\).
The first variations of the two exact cell volumes are opposite nonzero
normal fluxes, and all other cell actions vanish.
The resulting columns span a scaled incidence matrix of the connected
cell-adjacency graph. Its rank is \(m-1\).
This proves the rank of \(B_h\). A nonzero \((m-1)\)-minor persists near
zero, while the exact total-volume identity preserves the upper bound.
Hence the exact rank stays \(m-1\).

The three-amplitude identity \eqref{eq:physical-bubble-coefficient} gives
the feasibility of \(u_\tau\) and the exact derivative
\begin{equation}
Dq_h^Q(u_\tau)[U_1]
=\gamma_*\tau^2\mathbf e_{e_*},
\label{eq:missing-component-volume-action}
\end{equation}
where \(\mathbf e_{e_*}\) is the coordinate vector of the selected cell.
Small amplitudes give a small physical displacement gradient, proving
admissibility.
Choose fixed directions \(v_1,\ldots,v_{m-1}\) whose columns under \(B_h\)
are independent, and set
\[
T=[v_1,\ldots,v_{m-1},U_1],\qquad
M_\tau=Dq_h^Q(u_\tau)T.
\]
The first \(m-1\) columns of \(M_\tau\) converge to a basis of
\(\mathbf1_m^\perp\). By \eqref{eq:missing-component-volume-action}, its
last column divided by \(\tau^2\) is
\(\gamma_*\mathbf e_{e_*}\), whose component sum is nonzero.
It follows that
\[
|\det M_\tau|\ge c|\tau|^2
\]
for small nonzero \(\tau\), with \(c>0\). Therefore the raw map has rank \(m\).

For the singular-value estimate, define the aggregate raw volume change
\begin{equation}
C_h(u)=\mathbf1_m^Tq_h^Q(u).
\label{eq:aggregate-cubic-constraint}
\end{equation}
It is homogeneous cubic: the exact aggregate is zero and each cubature
difference is cubic. With \(v_0=U_2+U_3\),
\[
\left\|Dq_h^Q(u_\tau)^T\frac{\mathbf1_m}{\sqrt m}\right\|_2
=\frac{|\tau|^2}{\sqrt m}\|DC_h(v_0)\|_2.
\]
The variational characterization of the smallest row singular value
therefore gives the required \(O(\tau^2)\) upper bound.

The matrices \(M_\tau\) have bounded norm near zero. Their determinant
bound and the product formula for singular values give
\[
\sigma_m(M_\tau)
\ge\frac{|\det M_\tau|}{\|M_\tau\|_2^{m-1}}
\ge c'|\tau|^2.
\]
For every unit row vector \(a\),
\(\|T^TDq_h^Q(u_\tau)^Ta\|_2
\le\|T\|_2\|Dq_h^Q(u_\tau)^Ta\|_2\).
Taking the minimum yields
\(\sigma_m(M_\tau)\le\|T\|_2\sigma_m(Dq_h^Q(u_\tau))\), proving the
matching lower bound. When \(m=1\), the flux block is empty and the
same proof uses the single nonzero column in
\eqref{eq:missing-component-volume-action}.
\end{proof}

Around the nonzero feasible states in this theorem, the raw map consequently
has one more independent equation than the exact map. At the reference
itself, their values, pressure forces and pressure tangents coincide.
These lower-order equalities therefore do not establish equality of the
nearby nonlinear constraint geometry.

\subsection{The first reduced compatibility term}

The rank change identifies the cubic relation that enters the small-load
penalty limit. To state this relation precisely, let
\(K_h=\ker B_h\), let \(P_{K_h}\) be its Euclidean orthogonal projector,
and let \(\Pi\) project onto
\(\operatorname{range}B_h=\mathbf1_m^\perp\).
The local coordinate map
\[
u\longmapsto\bigl(P_{K_h}u,\Pi q_h^Q(u)\bigr)
\]
has invertible derivative at zero:
\(B_h\) maps \(K_h^\perp\) isomorphically onto its range.
The inverse-function theorem therefore provides a smooth local inverse
\(\Phi(v,z)\), with \(v\in K_h\) and
\(z\in\operatorname{range}B_h\)
\citep{dontchev2014implicit}.
In particular,
\[
\Phi(v,0)=v+O(\|v\|_2^2),\qquad
\Pi q_h^Q(\Phi(v,0))=0.
\]
The remaining scalar equation is
\(\rho_h(v)=\mathbf1_m^Tq_h^Q(\Phi(v,0))=0\).
Since \(C_h\) is homogeneous cubic,
\begin{equation}
\rho_h(v)=P_{3,h}(v)+O(\|v\|_2^4),
\qquad
P_{3,h}=C_h|_{K_h}.
\label{eq:reduced-cubic-compatibility}
\end{equation}
This cubic is nonzero. Indeed,
\(v=U_1+U_2+U_3\) has \(B_hv=0\) by its zero face trace, and
\(C_h(v)=\gamma_*\ne0\).
Thus the first reduced compatibility order is exactly three.
For the exact-volume map, the remaining scalar relation is identically
zero after the same regular-coordinate construction.

The distinction from \eqref{eq:coupled-trilinear-example} is now explicit.
Here the total-volume identity and lower-order exactness produce a
homogeneous aggregate cubic that is already nonzero on the linearized kernel.
The regular-coordinate correction cannot remove that leading term.
The subsequent penalty theorem uses this identified reduced term.

\begin{corollary}[Sharp fixed-mesh cube-root exponent]
\label{cor:assembled-cubic-sharpness}
On each fixed mesh of \Cref{thm:assembled-volume-geometry}, let
\(v=U_1+U_2+U_3\). As \(t\downarrow0\),
\[
\|q_h^Q(tv)\|_{\mathsf V^{-1}}=\Theta(t^3),\qquad
\operatorname{dist}_{H^1}(tv,\mathcal F_h^Q)=\Theta(t),\qquad
q_h^I(tv)=0.
\]
Consequently no local residual--distance upper bound with exponent greater
than \(1/3\) holds at zero. When all cells satisfy the uniform admission
hypotheses of \Cref{thm:cellwise-feasibility}, its cube-root upper exponent
is therefore optimal.
\end{corollary}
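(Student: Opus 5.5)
Three quantities must be controlled: the residual, the exact volume, and the distance. The plan is to read the residual and exact-volume claims directly off the three-amplitude identity \eqref{eq:physical-bubble-coefficient}. The upper distance bound then comes from an explicit feasible competitor, and the lower bound from the nonvanishing reduced cubic \eqref{eq:reduced-cubic-compatibility}.

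\emph{Residual and exact volume.} The directions $U_i$ are supported in $\overline K_{e_*}$ and vanish on its faces. Hence $q_{h,e}^Q(tv)=0$ for every $e\ne e_*$, while $q_{h,e_*}^Q(tv)=\gamma_*t^3$. This gives $\|q_h^Q(tv)\|_{\mathsf V^{-1}}=|\gamma_*|V_{e_*}^{-1/2}t^3$. The second identity in \eqref{eq:physical-bubble-coefficient}, together with the face-trace argument for the other cells, gives $q_h^I(tv)=0$.

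\emph{Upper distance bound.} The field $t(U_2+U_3)$ is raw-feasible by \eqref{eq:physical-bubble-coefficient}, since one amplitude vanishes. It differs from $tv$ by $tU_1$, so $\operatorname{dist}_{H^1}(tv,\mathcal F_h^Q)\le t\|U_1\|_{H^1}$.

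\emph{Lower distance bound.} This is the main step. On the fixed mesh all norms are equivalent, so it suffices to work in nodal coordinates. Suppose $w\in\mathcal F_h^Q$ satisfies $\|w-tv\|_2\le\epsilon t$ for a small fixed $\epsilon$; we derive a contradiction.
\begin{itemize}[label={},leftmargin=0pt]
\item Since $\|w\|_2=O(t)$, the point $w$ lies in the inverse-function neighbourhood of $\Phi$. Write $w=\Phi(v',z)$ with $v'=P_{K_h}w$ and $z=\Pi q_h^Q(w)=0$.
\item Feasibility forces $\rho_h(v')=0$. By \eqref{eq:reduced-cubic-compatibility}, $|P_{3,h}(v')|\le C\|v'\|_2^4$.
\item The field $v$ lies in $K_h$, because its face traces vanish. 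Hence $\|v'-tv\|_2\le\|w-tv\|_2\le\epsilon t$.
\item Homogeneity and Lipschitz continuity of the cubic give $|P_{3,h}(v')-t^3\gamma_*|\le C'\epsilon t^3$.
\item Combining the two estimates gives $|\gamma_*|t^3\le C'\epsilon t^3+C''t^4$. This is impossible once $\epsilon$ and $t$ are small.
\end{itemize}
If instead $\|w-tv\|_2>\epsilon t$, the distance is at least $\epsilon t$ directly. Any closest point of $\mathcal F_h^Q$ lies within $O(t)$ of $tv$, so the dichotomy covers every competitor. The only care needed is to check that the neighbourhood of $\Phi$ and the $O(\|v\|^4)$ remainder are uniform over points at distance $O(t)$ from zero, which holds for small $t$.

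\emph{Sharpness.} Suppose a bound $\operatorname{dist}\le C\|q\|^{\alpha}$ held near zero with $\alpha>1/3$. Along $tv$ it would give $\Theta(t)\le C\,t^{3\alpha}$, which fails as $t\downarrow0$. Under the uniform admission hypotheses, \Cref{thm:cellwise-feasibility} supplies the exponent $1/3$, which is therefore optimal.
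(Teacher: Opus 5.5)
Your proposal is correct. The residual, exact-volume and upper-distance steps agree with the paper; the paper uses the feasible point $0$ instead of $t(U_2+U_3)$, which changes only the constant.

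For the lower distance bound you take a genuinely different route. The paper never uses the regular coordinates $\Phi$. It notes that the aggregate $C_h=\mathbf 1_m^Tq_h^Q$ is an exactly homogeneous cubic on all of $\mathcal U_h$. Hence every raw-feasible point lies in the closed cone $\mathcal Z_h=C_h^{-1}(0)$. Homogeneity then gives $\operatorname{dist}_{H^1}(tv,\mathcal F_h^Q)\ge\operatorname{dist}_{H^1}(tv,\mathcal Z_h)=t\,\operatorname{dist}_{H^1}(v,\mathcal Z_h)$, which is positive because $C_h(v)=\gamma_*\ne0$.

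That cone argument is global. It holds for every $t>0$ and in any norm, and it gives an explicit lower constant. It needs no inverse-function neighbourhood, no $O(\|v\|_2^4)$ remainder and no near/far dichotomy over competitors.

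Your argument is instead a quantitative version of the tangent-cone obstruction the paper uses for Tet10 in \Cref{thm:volume-error-bound}. It uses only two facts: the reduced cubic $P_{3,h}$ is nonzero at $v$, and $\rho_h-P_{3,h}=O(\|v\|_2^4)$. So it would still work if the aggregate were not an exact homogeneous cubic on the whole space, provided the first reduced compatibility term remained cubic and nonzero along the ray. The cost is locality and the uniformity checks you already flag.

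In the affine setting of this corollary, exact homogeneity makes the paper's cone argument the shorter path.
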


\begin{proof}
The supported identity gives
\(q_h^Q(tv)=\gamma_*t^3\mathbf e_{e_*}\), and exact volumes are unchanged.
Every raw-feasible point belongs to the closed zero cone
\(\mathcal Z_h=\{u\in\mathcal U_h:C_h(u)=0\}\).
Since \(C_h(v)\ne0\), its distance from \(v\) in the fixed-mesh \(H^1\)
norm is positive. Homogeneity gives
\[
\operatorname{dist}_{H^1}(tv,\mathcal F_h^Q)
\ge\operatorname{dist}_{H^1}(tv,\mathcal Z_h)
=t\,\operatorname{dist}_{H^1}(v,\mathcal Z_h).
\]
Zero is feasible and supplies the matching upper bound \(t\|v\|_{H^1}\).
These bounds also hold when feasibility is restricted to any fixed
admissible neighborhood of zero. Comparing orders excludes every larger
upper-bound exponent.
\end{proof}

Only one admitted cell is required for the rank change, reduced cubic and
fixed-mesh sharpness. Correcting arbitrary residuals throughout the mesh
requires the cellwise hypotheses for every affected row.
The distance lower constant in the corollary may depend on the mesh; a
simultaneous mesh/load lower estimate is not asserted.
Projecting out a mean-pressure equation changes the constraint set and
requires its own comparison, as the Tet10 example will illustrate.

\section{Singular penalty limits and mechanical response}
\label{sec:singular-penalty}
\label{sec:finite-penalty-limit}

The additional volume relation also affects a finite-penalty problem when
the bulk coefficient increases as the load decreases. The relevant order
is the first nonzero term remaining after the regular constraint equations
have been solved. We first derive the resulting minimum-energy limit,
using regular coordinates and matching lower and upper estimates
\citep{bonnans1998perturbations,dontchev2014implicit}. The affine
finite-element application then supplies a positive mechanical Hessian and
a cubic reduced relation, giving a sextic limiting energy. The quadratic
relation of the Tet10 template gives the later quartic specialization
in \cref{sec:tet10-quartic-specialization}.

Mechanical-rigidity theory also relates nonlinear constraint compatibility
to quartic energy costs along motions with vanishing quadratic cost
\citep[Appendix~A]{damavandi2022energetic}. Here the joint load--bulk
scaling yields a weighted reduced energy and a limit theorem for
normalized minimum values and localized minimizers.

\subsection{Regular coordinates and reduced compatibility}

Let $N,m$ be positive integers and let $\mathcal U\subset\R^N$ be an open
neighborhood of zero. Let $q:\mathcal U\to\R^m$ be a $C^k$ constraint
map, where the integer $k\ge2$, and assume $q(0)=0$. Define
\[
 B_q=Dq(0),\qquad K=\ker B_q.
\]
Write $s=m-\rank B_q$ and suppose $s\ge1$. Let
$S\in\R^{m\times s}$ have full column rank and satisfy
$\operatorname{range}S=\ker B_q^T$. Its columns describe all resting
row relations; they need not be orthonormal. Let
$\mathsf V\in\R^{m\times m}$ be a fixed symmetric positive-definite
residual weight, and put
\[
 W_S=S^T\mathsf V S\succ0.
\]
For a positive-definite matrix $A$ and vectors of the appropriate
dimension, we use $\langle x,y\rangle_A=x^TAy$ and
$\|x\|_A=(x^TAx)^{1/2}$.

Let $P_K$ and $\Pi$ be the Euclidean orthogonal projectors onto $K$ and
$\operatorname{range}B_q$, respectively. The derivative at zero of
\[
 u\longmapsto\bigl(P_Ku,\Pi q(u)\bigr)
\]
is invertible as a map from $\R^N$ to
$K\times\operatorname{range}B_q$: the restriction of $B_q$ to
$K^\perp$ is an isomorphism onto its range. The inverse-function theorem
therefore gives a $C^k$ local inverse
\begin{equation}
 \Phi(v,z)=v+\psi(v,z),\qquad
 v\in K,\quad z\in\operatorname{range}B_q,\quad
 \psi(v,z)\in K^\perp.
 \label{eq:penalty-regular-coordinates}
\end{equation}
Here $P_K\Phi(v,z)=v$, $\Pi q(\Phi(v,z))=z$,
$\psi(0,0)=0$, and $D_v\psi(0,0)=0$. In a sufficiently small
neighborhood,
\begin{equation}
 \psi(v,z)=O(\|v\|_2^2+\|z\|_2).
 \label{eq:penalty-normal-coordinate-size}
\end{equation}
If $\rank B_q=0$, the regular coordinate $z$ is absent and
$\Phi(v,0)=v$.

The remaining relation is the map
\[
 \rho(v)=S^Tq(\Phi(v,0)),\qquad v\in K.
\]
Assume that its first nonzero homogeneous term has degree $k$:
\begin{equation}
 \rho(v)=P_k(v)+o(\|v\|_2^k),
 \qquad P_k:K\to\R^s,
 \label{eq:reduced-compatibility-polynomial}
\end{equation}
where $P_k$ is a nonzero homogeneous polynomial map. Thus $k$ is the
reduced compatibility order. Solving the regular equations can change
this order, as the coupled example in \cref{sec:tri-affine} illustrates;
the degree of the original constraint polynomial alone does not determine it.

Let the background energy $E:\mathcal U\to\R$ be $C^2$ and satisfy
\[
 E(0)=0,\qquad DE(0)=0,\qquad M=D^2E(0)\succ0.
\]
The matrix $M$ is the Hessian of the actual chosen energy. Fix a load
covector $\ell\in\R^N$ and a coefficient $\beta>0$. For load amplitude
$\varepsilon>0$, define
\begin{equation}
 J_{\varepsilon,\beta}(u)
 =E(u)-\varepsilon\ell^Tu
  +\frac{\beta}{2\varepsilon^{2k-2}}
       q(u)^T\mathsf V^{-1}q(u).
 \label{eq:singular-penalty-energy}
\end{equation}
Choose a sufficiently small fixed closed ball $\mathcal N$ centered at
zero, contained in the coordinate neighborhood, such that
$E(u)\ge c_0\|u\|_2^2$ on $\mathcal N$ for a constant $c_0>0$.
Positive definiteness of $M$ guarantees such a ball. In the finite-element
application it is also contained in the admissible deformation neighborhood.

\begin{theorem}[Penalty limit determined by reduced compatibility]
\label{thm:finite-penalty-limit}
For each $\varepsilon>0$, let $u_{\varepsilon,\beta}$ minimize
\eqref{eq:singular-penalty-energy} over $\mathcal N$. Define on $K$
\begin{equation}
 J_\beta^0(v)
 =\frac12v^TMv-\ell^Tv
  +\frac\beta2 P_k(v)^TW_S^{-1}P_k(v).
 \label{eq:general-penalty-limit}
\end{equation}
Then
\[
 u_{\varepsilon,\beta}=O(\varepsilon),\qquad
 q(u_{\varepsilon,\beta})=O(\varepsilon^k),
\]
and
\[
 \frac{\min_{u\in\mathcal N}J_{\varepsilon,\beta}(u)}
      {\varepsilon^2}
 \longrightarrow\min_{v\in K}J_\beta^0(v).
\]
Every cluster point of $u_{\varepsilon,\beta}/\varepsilon$ minimizes
$J_\beta^0$. If its minimizer $v_\beta$ is unique, then
$u_{\varepsilon,\beta}/\varepsilon\to v_\beta$. The minimizing points
are interior to $\mathcal N$ for all sufficiently small $\varepsilon$.
\end{theorem}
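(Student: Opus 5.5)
The plan is a $\Gamma$-convergence-type argument in the scaled variable $w=u/\varepsilon$, with matching upper and lower bounds for the normalized minimum value.

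\emph{Upper bound (recovery sequence).} Fix $v\in K$ and set $u=\Phi(\varepsilon v,0)$. Then $\Pi q(u)=0$. Hence $q(u)=\mathsf V S W_S^{-1}\rho(\varepsilon v)$: the vector $q$ lies in $\ker\Pi=\ker B_q^T=\operatorname{range}S$, so $q=S\alpha$. Strictly, write $q=S\alpha$ with $\alpha=(S^TS)^{-1}\rho$ if $\rho=S^Tq$; then
\[
q^T\mathsf V^{-1}q=\rho^T(S^TS)^{-1}S^T\mathsf V^{-1}S(S^TS)^{-1}\rho .
\]
I must check that this matches $W_S^{-1}$ as defined, i.e.\ that the weight convention $W_S=S^T\mathsf VS$ is consistent. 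Since $\operatorname{range}S=\ker B_q^T$, the natural minimal quantity is $\min\{q^T\mathsf V^{-1}q:S^Tq=\rho\}=\rho^T(S^T\mathsf VS)^{-1}\rho$, attained at $q=\mathsf VS W_S^{-1}\rho$. This minimum is what appears in the lower bound. For the recovery sequence I will therefore not insist on $z=0$. Instead I choose $z\in\operatorname{range}B_q$ of size $O(\varepsilon^k)$ so that $q(\Phi(\varepsilon v,z))$ equals $\varepsilon^k\mathsf VSW_S^{-1}P_k(v)+o(\varepsilon^k)$. This is possible because $\Pi$ of that target is an $O(\varepsilon^k)$ vector in the range, and by \eqref{eq:penalty-normal-coordinate-size} the perturbation $z$ moves $u$ by only $O(\varepsilon^k)=o(\varepsilon)$. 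It also changes $S^Tq$ by only $o(\varepsilon^k)$, since $S^Tq(\Phi(\varepsilon v,z))-S^Tq(\Phi(\varepsilon v,0))=O(\varepsilon\cdot\varepsilon^k)$ once I use $S^TB_q=0$ and $C^2$ bounds. Energy: $E(u)=\tfrac{\varepsilon^2}{2}v^TMv+o(\varepsilon^2)$ and $\ell^Tu=\varepsilon\ell^Tv+o(\varepsilon)$. The penalty is then $\tfrac{\beta}{2}\varepsilon^{2}P_k^TW_S^{-1}P_k+o(\varepsilon^2)$. Taking $v$ a minimizer of $J^0_\beta$ gives $\limsup\min J_{\varepsilon,\beta}/\varepsilon^2\le\min J^0_\beta$. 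The minimum exists because $J^0_\beta$ is coercive ($M\succ0$, nonnegative penalty) and continuous.

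\emph{A priori bounds.} Comparing with $u=0$ gives $J_{\varepsilon,\beta}(u_{\varepsilon,\beta})\le0$. Then $c_0\|u\|^2\le\varepsilon\|\ell\|\|u\|$ yields $u=O(\varepsilon)$. The upper bound makes the penalty term $O(\varepsilon^2)$, whence $q^T\mathsf V^{-1}q=O(\varepsilon^{2k})$, i.e.\ $q=O(\varepsilon^k)$. Interiority follows because $\|u\|=O(\varepsilon)$ is smaller than the ball radius.

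\emph{Lower bound.} Write $u=\Phi(v_\varepsilon,z_\varepsilon)$ with $v_\varepsilon=P_Ku$ and $z_\varepsilon=\Pi q(u)=O(\varepsilon^k)$. Then $\psi=O(\varepsilon^2)$, so $u/\varepsilon=v_\varepsilon/\varepsilon+o(1)$. This is the step I expect to be the main obstacle: controlling $S^Tq(u)$ by $\rho(v_\varepsilon)$ up to $o(\varepsilon^k)$. The idea is that $S^Tq(\Phi(v,z))-S^Tq(\Phi(v,0))$ has derivative in $z$ equal to $S^TDq\,D_z\Phi$. This vanishes at the origin since $S^TB_q=0$, so it is $O((\|v\|+\|z\|)\|z\|)=O(\varepsilon^{k+1})$. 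Hence $S^Tq(u)=P_k(v_\varepsilon)+o(\varepsilon^k)$. The constrained-minimum inequality gives $q^T\mathsf V^{-1}q\ge(S^Tq)^TW_S^{-1}(S^Tq)$, because $q^T\mathsf V^{-1}q$ is minimized over $q$ with fixed $S^Tq$. Together with the expansions of $E$ and $\ell^Tu$, this yields $J_{\varepsilon,\beta}(u_{\varepsilon,\beta})\ge\varepsilon^2J^0_\beta(v_\varepsilon/\varepsilon)+o(\varepsilon^2)\ge\varepsilon^2\min J^0_\beta+o(\varepsilon^2)$.

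\emph{Cluster points.} Along any subsequence with $u_{\varepsilon,\beta}/\varepsilon\to\bar v$, we have $\bar v\in K$ as the limit of $v_\varepsilon/\varepsilon$. Continuity and the matching bounds then force $J^0_\beta(\bar v)=\min J^0_\beta$. When the minimizer is unique, boundedness plus uniqueness of the cluster point gives convergence of the full family.
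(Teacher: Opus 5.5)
Your argument is correct and follows the paper's proof essentially step for step: the same a priori bounds from comparison with zero, the same $O((\|v\|+\|z\|)\|z\|)$ estimate for the effect of the regular coordinate on $S^Tq$, the same least-norm inequality $q^T\mathsf V^{-1}q\ge(S^Tq)^TW_S^{-1}(S^Tq)$, and the same recovery sequence $\Phi(\varepsilon v,\varepsilon^k\Pi\,\mathsf VSW_S^{-1}P_k(v))$. Stating the lower bound uniformly as $J_{\varepsilon,\beta}(u_{\varepsilon,\beta})\ge\varepsilon^2J^0_\beta(v_\varepsilon/\varepsilon)+o(\varepsilon^2)$, rather than along subsequences, is a harmless simplification. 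You should delete the opening claim that $\Phi(\varepsilon v,0)$ gives $q=\mathsf VSW_S^{-1}\rho$. It is false for general $\mathsf V$, since $\Pi q=0$ forces $q\in\operatorname{range}S$; you diagnose this yourself, and it is exactly why the nonzero regular coordinate $z$ is needed.
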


\begin{proof}
Continuity and compactness give a minimum. Comparison with zero yields
\[
 c_0\|u_{\varepsilon,\beta}\|_2^2
 \le\varepsilon\|\ell\|_2\|u_{\varepsilon,\beta}\|_2.
\]
Consequently $u_{\varepsilon,\beta}=O(\varepsilon)$. The same comparison
bounds the penalty energy by $O(\varepsilon^2)$. Since $\beta$ and
$\mathsf V$ are fixed and positive, this gives
$q(u_{\varepsilon,\beta})=O(\varepsilon^k)$. The displacement bound
also makes the minima interior for small $\varepsilon$.

Write
\[
 v_\varepsilon=P_Ku_{\varepsilon,\beta},\qquad
 z_\varepsilon=\Pi q(u_{\varepsilon,\beta}).
\]
Their orders are $O(\varepsilon)$ and $O(\varepsilon^k)$, respectively.
We first control the effect of the regular coordinate on the reduced
relation. Since $S^TDq(0)=0$ and $q$ is $C^2$, the derivative
$S^TDq(\Phi(v,z))$ has norm $O(\|v\|_2+\|z\|_2)$ near zero.
The derivative $D_z\Phi$ is bounded there. Integrating along the
segment from $0$ to $z$ gives
\begin{equation}
 S^Tq(\Phi(v,z))-\rho(v)
 =O\bigl((\|v\|_2+\|z\|_2)\|z\|_2\bigr).
 \label{eq:penalty-reduced-coordinate-error}
\end{equation}
Take a subsequence for which $v_\varepsilon/\varepsilon\to v\in K$.
Equation \eqref{eq:penalty-normal-coordinate-size} shows that
$u_{\varepsilon,\beta}/\varepsilon$ has the same limit. Homogeneity,
\eqref{eq:reduced-compatibility-polynomial}, and
\eqref{eq:penalty-reduced-coordinate-error} imply
\begin{equation}
 \frac{S^Tq(u_{\varepsilon,\beta})}{\varepsilon^k}
 \longrightarrow P_k(v).
 \label{eq:penalty-reduced-residual-limit}
\end{equation}

For every residual vector $r\in\R^m$,
\begin{equation}
 r^T\mathsf V^{-1}r
 \ge(S^Tr)^TW_S^{-1}(S^Tr).
 \label{eq:weighted-residual-lower-bound}
\end{equation}
Indeed, for a prescribed $a\in\R^s$ the vector
$r_* =\mathsf VSW_S^{-1}a$ satisfies $S^Tr_*=a$. If also
$S^Tr=a$, then $r-r_*$ is orthogonal to $r_*$ in the
$\mathsf V^{-1}$ inner product, because
\[
 (r-r_*)^T\mathsf V^{-1}r_*
 =(S^T(r-r_*))^TW_S^{-1}a=0.
\]
Thus $r_*$ is the least-norm residual with that projected value, and its
squared norm is $a^TW_S^{-1}a$. Applying
\eqref{eq:weighted-residual-lower-bound} with
\eqref{eq:penalty-reduced-residual-limit}, and expanding $E$ to second
order, proves
\[
 \liminf_{\varepsilon\to0}
 \frac{J_{\varepsilon,\beta}(u_{\varepsilon,\beta})}{\varepsilon^2}
 \ge J_\beta^0(v)
\]
along the chosen subsequence.

For the matching upper bound, fix any $v\in K$ and set
\[
 r_*(v)=\mathsf VSW_S^{-1}P_k(v),\qquad
 u_\varepsilon^{\rm rec}
 =\Phi\bigl(\varepsilon v,\varepsilon^k\Pi r_*(v)\bigr).
\]
This recovery sequence lies in $\mathcal N$ for small $\varepsilon$,
and $u_\varepsilon^{\rm rec}/\varepsilon\to v$.
Its residual satisfies
\[
 \frac{\Pi q(u_\varepsilon^{\rm rec})}{\varepsilon^k}
 =\Pi r_*(v),\qquad
 \frac{S^Tq(u_\varepsilon^{\rm rec})}{\varepsilon^k}
 \longrightarrow P_k(v)=S^Tr_*(v).
\]
These two projections determine the residual. More explicitly,
\[
 r=\Pi r+S(S^TS)^{-1}S^Tr\qquad(r\in\R^m),
\]
because $\operatorname{range}S$ is the Euclidean orthogonal complement
of $\operatorname{range}B_q$. Hence
$q(u_\varepsilon^{\rm rec})/\varepsilon^k\to r_*(v)$, and
\[
 \frac{J_{\varepsilon,\beta}(u_\varepsilon^{\rm rec})}{\varepsilon^2}
 \longrightarrow J_\beta^0(v).
\]

The positive quadratic part and nonnegative penalty make $J_\beta^0$
coercive on the finite-dimensional space $K$, so it has a minimum and a
compact minimizing set. Apply the upper bound at one limiting minimum,
and the lower bound to convergent subsequences of the bounded normalized
minimizers. This proves convergence of minimum values and the cluster-point
assertion. If the limiting minimum is unique, every subsequence has the
same limit, which proves convergence of the whole normalized family.
\end{proof}

The theorem concerns minima over the specified neighborhood. Interiority
makes them local minima of the unrestricted energy for small load.
Other stationary points, including metastable local minima selected by a
numerical method, require additional analysis. The proof also applies to an
expansion with $P_k\equiv0$; its limiting penalty then vanishes, and no
nonzero compatibility order is assigned at that degree.

The residual metric is independent of the basis chosen for the row
relations. Replacing $S$ by $SR$, with $R\in\R^{s\times s}$ invertible,
replaces $P_k$ by $R^TP_k$ and $W_S$ by $R^TW_SR$. Their quadratic
combination in \eqref{eq:general-penalty-limit} is unchanged.

\subsection{The mechanical Hessian and the assembled sextic limit}

The affine, fully clamped finite-element model supplies an actual
background energy satisfying the theorem. Let $\Omega\subset\R^3$ be
a bounded Lipschitz domain with a conforming affine tensor-element mesh
whose physical cells are $K_e$.
Let $u_h$ belong to its displacement space contained in
$H^1_0(\Omega;\R^3)$, the Sobolev space with zero external trace.
Its cell pullbacks have tensor degree $p_e$, and the material rule has at
least $p_e+1$ Gauss points in each coordinate. On cell $e$, take the
isochoric neo-Hookean density
\[
 W_{{\rm iso},e}(F)
 =\frac{\mu_e}{2}\bigl((\det F)^{-2/3}F{:}F-3\bigr),
 \qquad\det F>0,
\]
where the cellwise constant shear moduli satisfy
$\mu_e\ge\mu_0>0$. Denote by $E_h$ the sum of these quadrature
energies, with $F=I_3+\nabla_Xu_h$ and the reference-volume measure.

\begin{lemma}[Positive resting mechanical Hessian]
\label{lem:affine-mechanical-positivity}
The energy $E_h$ is stationary at zero. Its Hessian $M=D^2E_h(0)$ obeys
\begin{equation}
 M[u_h,u_h]
 =2\sum_e\mu_e\int_{K_e}
       |\dev\sym\nabla_Xu_h|_F^2
 \ge\mu_0\int_\Omega|\nabla_Xu_h|_F^2.
 \label{eq:affine-mechanical-hessian}
\end{equation}
It is positive definite on the clamped nodal space. If
$\|u_h\|_{L^2}\le C_P\|\nabla_Xu_h\|_{L^2}$, then
$M[u_h,u_h]\ge\mu_0\|u_h\|_{H^1}^2/(1+C_P^2)$.
\end{lemma}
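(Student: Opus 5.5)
We expand $W_{\rm iso}$ to second order about $F=I_3$, then exploit that the material rule integrates the resulting quadratic exactly on affine cells, and finally use Korn's inequality for clamped fields.

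\emph{Step 1: pointwise expansion.} Write $F=I_3+H$ and use
\[
\det(I_3+H)=1+\tr H+\tfrac12\bigl((\tr H)^2-\tr(H^2)\bigr)+O(|H|^3),
\]
so that
\[
(\det F)^{-2/3}=1-\tfrac23\tr H+\tfrac59(\tr H)^2-\tfrac13\bigl((\tr H)^2-\tr(H^2)\bigr)+O(|H|^3).
\]
Multiplying by $F:F=3+2\tr H+H:H$ gives the following.
\begin{itemize}
\item The linear term cancels, $2\tr H-2\tr H=0$. Hence $DE_h(0)=0$ for every quadrature rule, since the integrand is stationary pointwise.
\item The quadratic term is
\[
H:H-\tfrac43(\tr H)^2+\tfrac53(\tr H)^2-(\tr H)^2+\tr(H^2)
=H:H+\tr(H^2)-\tfrac23(\tr H)^2 .
\]
\end{itemize}
Since $H:H+\tr(H^2)=2|\sym H|_F^2$, this equals $2|\dev\sym H|_F^2$. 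With the factor $\mu_e/2$, the Hessian form (twice the quadratic part) is $2\mu_e|\dev\sym H|^2$ per point.

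\emph{Step 2: exactness of the quadrature.} On an affine cell, $\nabla_X u_h=D_\xi\widehat u_e A_e^{-1}$. Its entries have coordinate degree at most $p_e$, so the quadratic integrand has degree at most $2p_e\le 2(p_e+1)-1$. The Gauss rule with $p_e+1$ points per coordinate therefore integrates it exactly against the constant Jacobian $\det A_e$. This yields the identity in \eqref{eq:affine-mechanical-hessian} with exact integrals.

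\emph{Step 3: the lower bound.} The lower bound is the main point. First, $\mu_e\ge\mu_0$ reduces it to showing $2\int_\Omega|\dev\sym\nabla u|^2\ge\int_\Omega|\nabla u|^2$ for $u\in H^1_0$. Since $|\dev S|^2=|S|^2-\tfrac13(\tr S)^2$, we have
\[
2\int_\Omega|\dev\sym\nabla u|^2=2\int_\Omega|\sym\nabla u|^2-\tfrac23\int_\Omega(\operatorname{div}u)^2 .
\]
For clamped fields, integration by parts (density of smooth compactly supported fields) gives the Korn identity
\[
2\int_\Omega|\sym\nabla u|^2=\int_\Omega|\nabla u|^2+\int_\Omega(\operatorname{div}u)^2 .
\]
Substituting,
\[
2\int_\Omega|\dev\sym\nabla u|^2=\int_\Omega|\nabla u|^2+\tfrac13\int_\Omega(\operatorname{div}u)^2\ge\int_\Omega|\nabla u|^2 ,
\]
which is the claimed bound. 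The identity must be applied globally, not cellwise. That is legitimate because the cellwise integrals sum to the integral over $\Omega$ of an $H^1_0$ field.

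\emph{Step 4: consequences.} Positive definiteness on the nodal space follows because $\nabla u_h=0$ together with the zero trace forces $u_h=0$. The $H^1$ bound follows from $\|u\|_{H^1}^2\le(1+C_P^2)\|\nabla u\|_{L^2}^2$.
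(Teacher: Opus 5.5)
Your proposal is correct and takes essentially the same route as the paper. Both use the second-order expansion of the modified first invariant at $I_3$, exactness of the Gauss rule with at least $p_e+1$ points for the degree-$2p_e$ quadratic integrand on affine cells, and the clamped integration-by-parts identity $2\int_\Omega|\dev\sym\nabla u|_F^2=\int_\Omega|\nabla u|_F^2+\tfrac13\int_\Omega(\operatorname{div}u)^2$, applied globally after bounding $\mu_e\ge\mu_0$. Your explicit Taylor coefficients in Step 1 simply supply the expansion that the paper states without derivation.
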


\begin{proof}
For a matrix increment $H$, put
$\sym H=(H+H^T)/2$ and $\dev H=H-\tr(H)I_3/3$.
Expanding the modified first invariant at the identity gives
\[
 (\det(I_3+tH))^{-2/3}|I_3+tH|_F^2
 =3+2t^2|\dev\sym H|_F^2+O(t^3).
\]
The first derivative therefore vanishes and
$D^2W_{{\rm iso},e}(I_3)[H,H]=2\mu_e|\dev\sym H|_F^2$.
On an affine cell, this quadratic physical-gradient integrand has parent
coordinate degree at most $2p_e$ in every coordinate. The stated Gauss
rule integrates it exactly, proving the equality in
\eqref{eq:affine-mechanical-hessian}.

For $u\in H^1_0(\Omega;\R^3)$, integration by parts gives
\[
 \sum_{i,j=1}^3\int_\Omega
       \partial_j u_i\,\partial_i u_j
 =\int_\Omega(\operatorname{div}u)^2.
\]
It follows first for smooth compactly supported fields and then by
density in $H^1_0$ \citep{brezis2011functional}. Expanding the symmetric
and deviatoric parts yields the identity
\begin{equation}
 2\int_\Omega|\dev\sym\nabla_Xu|_F^2
 =\int_\Omega\left(|\nabla_Xu|_F^2
                    +\frac13|\operatorname{div}u|^2\right).
 \label{eq:clamped-deviatoric-gradient-identity}
\end{equation}
Comparison with $\mu_0$ proves the lower bound. A zero value then forces
$u$ to be constant, and its zero trace forces $u=0$. Poincar\'e's
inequality controls the $L^2$ term in the $H^1$ norm.
\end{proof}

This proof uses the mechanical energy directly. A fixed nonnegative
background mean-volume penalty adds
$\kappa_0B_q^T\mathsf V^{-1}B_q$ to the resting Hessian and preserves
positivity. Finite-strain prestress, contact, and follower loads require
their own hypotheses.

Now consider the affine assembled volume maps of
\cref{thm:assembled-volume-geometry}. Keep all $m$ cell-pressure
equations and the complete external displacement clamp. Write
$q_h^Q(u)$ and $q_h^I(u)$ for the raw and exact cell-volume changes of
\eqref{eq:global-volume-maps}. Their common resting Jacobian is $B_q$,
with rank $m-1$. Let $V_e>0$ be the reference cell volumes and set
\[
 \mathsf V=\operatorname{diag}(V_e),\qquad S=\mathbf1,
 \qquad W_S=\sum_eV_e=|\Omega|,
\]
where $\mathbf1\in\R^m$ is the vector of ones.
The aggregate raw constraint
\[
 C_h(u)=\mathbf1^Tq_h^Q(u)
\]
is the homogeneous cubic in \eqref{eq:aggregate-cubic-constraint}; the
supported witness makes it nonzero on $K=\ker B_q$. One cell with the
stated nonzero cubature defect suffices for this conclusion.

\begin{corollary}[Assembled sextic penalty limit]
\label{cor:assembled-sextic-limit}
Assume the affine assembled hypotheses just stated, and let the common
background energy satisfy the mechanical hypotheses of
\cref{thm:finite-penalty-limit}, for example as in
\cref{lem:affine-mechanical-positivity}. At bulk coefficient
$\kappa_\varepsilon=\beta/\varepsilon^4$, the raw localized minimum
problem has limiting energy
\begin{equation}
 J_\beta^{Q,0}(v)
 =\frac12v^TMv-\ell^Tv
  +\frac{\beta}{2|\Omega|}C_h(v)^2,
 \qquad v\in K.
 \label{eq:sextic-volume-limit}
\end{equation}
The exact-volume problem with the same background energy has the same
space $K$ and limiting energy
$J^{I,0}(v)=v^TMv/2-\ell^Tv$.
\end{corollary}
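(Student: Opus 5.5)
The plan is to apply \cref{thm:finite-penalty-limit} twice, once with $q=q_h^Q$ and once with $q=q_h^I$. In both cases $\mathsf V=\operatorname{diag}(V_e)$, and the background energy is the common $E_h$ with $M=D^2E_h(0)\succ0$ by \cref{lem:affine-mechanical-positivity}. The first task is to identify the data. By \cref{thm:assembled-volume-geometry} both maps share the resting Jacobian $B_q=B_h$ of rank $m-1$. Hence $s=1$, $K=\ker B_h$ is the same space for both problems, and $\ker B_q^T=\spann\{\mathbf1\}$ because $\mathbf1^TB_h=0$ and the rank deficiency is one. Taking $S=\mathbf1$ gives $W_S=\mathbf1^T\mathsf V\mathbf1=\sum_eV_e=|\Omega|$. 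Both maps are polynomial, hence $C^k$ for every $k$. We also fix the ball $\mathcal N$ small enough to lie in the regular-coordinate neighborhood and in the admissible strain neighborhood, so that $E_h$ is defined and $E_h(u)\ge c_0\|u\|_2^2$ there.

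For the raw map, the key step is the reduced compatibility expansion \eqref{eq:reduced-cubic-compatibility}: $\rho_h(v)=\mathbf1^Tq_h^Q(\Phi(v,0))=C_h(\Phi(v,0))$. To justify this, I would recall that $C_h$ is a homogeneous cubic: the exact aggregate vanishes by \eqref{eq:general-exact-volume-relation}, and each cubature difference is homogeneous cubic on affine cells. Since $\Phi(v,0)=v+O(\|v\|_2^2)$ and $DC_h(w)=O(\|w\|_2^2)$, it follows that $\rho_h(v)=C_h(v)+O(\|v\|_2^4)$. The cubic $P_3=C_h|_K$ is nonzero because the witness $U_1+U_2+U_3$ lies in $K$ (its face traces vanish, so $B_hv=0$) and satisfies $C_h=\gamma_*\ne0$. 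Thus $k=3$, the prescribed scaling is $\beta/\varepsilon^{2k-2}=\beta/\varepsilon^4$, matching $\kappa_\varepsilon$, and the penalty in \eqref{eq:general-penalty-energy} becomes exactly \eqref{eq:singular-penalty-energy} after the load term is added. Substituting $P_3=C_h|_K$ and $W_S^{-1}=|\Omega|^{-1}$ into \eqref{eq:general-penalty-limit} yields \eqref{eq:sextic-volume-limit}.

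For the exact map, the point to settle is that \cref{thm:finite-penalty-limit} still applies even though no nonzero order exists. The remark after that theorem covers an expansion with $P_k\equiv0$, and here $\rho^I(v)=\mathbf1^Tq_h^I(\Phi^I(v,0))\equiv0$ identically by \eqref{eq:general-exact-volume-relation}. So I would run the theorem with $k=3$ and $P_3\equiv0$, at the same bulk scaling. Its lower bound uses only the nonnegativity of the penalty together with the expansion of $E$. Its recovery sequence $\Phi^I(\varepsilon v,0)$ has zero residual, because $\Pi q_h^I=0$ and $\mathbf1^Tq_h^I=0$ together force $q_h^I=0$. This gives $J^{I,0}(v)=v^TMv/2-\ell^Tv$ on the same $K$.

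I expect the main obstacle to be bookkeeping rather than new analysis. Three checks are needed: that the $O(\|v\|^4)$ remainder in \eqref{eq:reduced-cubic-compatibility} is the $o(\|v\|^3)$ the theorem requires; that the exact case fits the theorem's hypotheses despite the theorem assuming $P_k$ nonzero, which I would resolve by noting that its proof never uses $P_k\ne0$; and that the neighborhood $\mathcal N$ can be chosen common to both problems and inside the admissible set.
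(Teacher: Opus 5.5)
Your proposal is correct and follows the paper's proof. The paper likewise uses $\psi(v,0)=O(\|v\|_2^2)$ and homogeneity of $C_h$ to get $\mathbf1^Tq_h^Q(\Phi(v,0))=C_h(v)+O(\|v\|_2^4)$, identifies $P_3=C_h|_K$, and applies \cref{thm:finite-penalty-limit} with $k=3$, $S=\mathbf1$, $W_S=|\Omega|$, using its zero-polynomial case for the exact map because $\mathbf1^Tq_h^I\equiv0$ (your extra checks on the witness $U_1+U_2+U_3$ and the exact recovery sequence are consistent refinements).
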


\begin{proof}
The normal correction satisfies $\psi(v,0)=O(\|v\|_2^2)$. Homogeneity
of the cubic gives
\[
 \mathbf1^Tq_h^Q(\Phi(v,0))
 =C_h(v+\psi(v,0))=C_h(v)+O(\|v\|_2^4).
\]
Thus the first reduced compatibility term is $P_3=C_h|_K$.
For the exact-volume map, the fixed external trace gives
$\mathbf1^Tq_h^I\equiv0$, so its reduced relation vanishes identically.
Apply \cref{thm:finite-penalty-limit} with $k=3$, including its
zero-polynomial case for the exact comparison.
\end{proof}

Choose any $v\in K$ with $C_h(v)\ne0$ and load $\ell=Mv$.
The exact limiting energy has unique minimum $v$. The derivative of the
raw limiting energy there, in the admissible direction $v$, is
\[
 DJ_\beta^{Q,0}(v)[v]
 =\frac{3\beta}{|\Omega|}C_h(v)^2>0.
\]
Consequently $v$ is separated from the compact raw minimizing set.
If $d_\beta>0$ is their distance in the $M$ norm, the localized minima
satisfy
\[
 \liminf_{\varepsilon\to0}
 \left\|\frac{u_{\varepsilon,\beta}^Q-
                  u_{\varepsilon,\beta}^I}{\varepsilon}\right\|_M
 \ge d_\beta.
\]
This is a fixed-mesh response consequence. The constants and regular
coordinate neighborhoods needed for a simultaneous mesh/load limit
would require further uniform estimates.

\subsection{Agreement at fixed finite bulk}

For clarity, let $q^Q$ and $q^I$ be any two smooth volume maps with
$q^Q(0)=q^I(0)=0$ and common derivative $B_q$. Hold the background
energy $E$, residual weight $\mathsf V$, and finite bulk coefficient
$\kappa\ge0$ fixed. The complete penalized energy has multiplier
$\pi=\kappa\mathsf V^{-1}q(u)$ and Hessian
\[
 D^2E(u)+\kappa Dq(u)^T\mathsf V^{-1}Dq(u)
       +\sum_{e=1}^m\pi_eD^2q_e(u).
\]
Both pressure contributions are retained. At zero the last term vanishes,
and the two resting Hessians coincide:
\[
 M_\kappa=M+\kappa B_q^T\mathsf V^{-1}B_q\succ0.
\]
The implicit-function theorem applied to the stationary equations gives
unique nearby branches, whose Hessians remain positive for small load,
with expansions
\begin{equation}
 u_\varepsilon^Q=\varepsilon M_\kappa^{-1}\ell+o(\varepsilon),
 \qquad
 u_\varepsilon^I=\varepsilon M_\kappa^{-1}\ell+o(\varepsilon).
 \label{eq:fixed-bulk-response-agreement}
\end{equation}
If $E$ and the maps are $C^3$, the remainders are $O(\varepsilon^2)$.
For $\ell=Mv$ with $v\in K$, one has $M_\kappa v=Mv=\ell$,
so their common leading response is $\varepsilon v$. The extra sextic
term in \eqref{eq:sextic-volume-limit} describes the nonuniform joint
small-load and large-bulk limit.

\subsection{A normalized extremal response}

The general limiting energy also gives an explicit normalized response
when the load is selected from an extremal compatibility mode.
The required norm identity for symmetric multilinear forms on real
Hilbert spaces is classical: their norm equals the supremum on repeated
unit arguments, usually called Banach's theorem. We use it through the
symmetric-tensor formulation discussed by
\citet[Theorem~9]{friedland2011rankone}.

\begin{corollary}[Normalized response for compatibility order $k$]
\label{cor:general-penalty-mode}
In \cref{thm:finite-penalty-limit}, choose $v_*\in K$ with
$\|v_*\|_M=1$ attaining
\[
 p_* =\max_{\substack{v\in K\\\|v\|_M=1}}
                      \|P_k(v)\|_{W_S^{-1}}>0,
 \qquad \ell=Mv_*.
\]
For a dimensionless parameter $0<c\le1/(k-1)$, set
$\beta=c/(kp_*^2)$. Then \eqref{eq:general-penalty-limit} has the
unique global minimum $t_cv_*$, where $t_c\in(0,1)$ is the unique root
\begin{equation}
 t_c+c\,t_c^{2k-1}=1.
 \label{eq:general-penalty-mode}
\end{equation}
For a comparison whose reduced relation vanishes identically and whose
$B_q$ and $M$ are unchanged, the limiting minimum is $v_*$. The
normalized $M$-distance between the limiting responses is $1-t_c$.
\end{corollary}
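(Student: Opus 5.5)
The plan is to reduce the limiting problem \eqref{eq:general-penalty-limit} to a one-dimensional problem along the ray through $v_*$, using the extremality of $v_*$ together with Banach's norm identity. Write $\phi(v)=\|P_k(v)\|_{W_S^{-1}}^2$; this is a homogeneous polynomial of degree $2k$ on $K$. With $\ell=Mv_*$ the limit energy becomes
\[
J_\beta^0(v)=\tfrac12\|v\|_M^2-\langle v_*,v\rangle_M+\tfrac\beta2\phi(v).
\]
Two facts will be used. First, $\phi(v)\le p_*^2\|v\|_M^{2k}$ by the definition of $p_*$ and homogeneity. Second, the maximizer $v_*$ is a critical point of $\phi$ on the $M$-unit sphere, so Lagrange/Euler homogeneity gives $D\phi(v_*)=2k\,p_*^2\,Mv_*$ as a covector. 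This is because $D\phi(v_*)[v_*]=2k\phi(v_*)=2kp_*^2$, and the tangential derivative vanishes.

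\textbf{Step 1: the lower bound.} For any $v\in K$, write $s=\|v\|_M$. Then $\langle v_*,v\rangle_M\le s$, with equality only if $v=sv_*$. Hence
\[
J_\beta^0(v)\ge g(s):=\tfrac12s^2-s+\tfrac{\beta p_*^2}2 s^{2k}.
\]
Equality holds exactly on the ray $v=sv_*$ with $s\ge0$. On that ray $\phi(sv_*)=p_*^2s^{2k}$, so $J_\beta^0(sv_*)=g(s)$. The global minimum of $J_\beta^0$ is therefore $\min_{s\ge0}g(s)$, and every minimizer has the form $s_0v_*$ where $s_0$ minimizes $g$ on $[0,\infty)$. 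The strictness of the Cauchy--Schwarz step rules out minimizers off the ray, unless $s_0=0$, and $s_0=0$ is excluded because $g'(0)=-1<0$.

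\textbf{Step 2: the scalar problem.} With $\beta p_*^2=c/k$, we get $g'(s)=s-1+c\,s^{2k-1}$. This is strictly increasing, since $g''=1+c(2k-1)s^{2k-2}>0$. So $g$ is strictly convex on $[0,\infty)$, and its unique critical point is the root $t_c$ of \eqref{eq:general-penalty-mode}. We have $g'(0)=-1$ and $g'(1)=c>0$, which places $t_c$ in $(0,1)$. Thus $t_cv_*$ is the unique global minimizer.

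The hypothesis $c\le1/(k-1)$ is not needed for uniqueness in this argument. I would therefore check whether the authors use it only for a nondegeneracy or Hessian statement, and simply note that it is harmless here. The step I expect to be the main obstacle is confirming that no minimizer escapes Step 1. That reduction rests on the inequality $\phi(v)\le p_*^2\|v\|_M^{2k}$; the paper's citation of Banach/Friedland indicates it is meant to guarantee the maximum is attained, and to identify $p_*$ with the multilinear norm if needed. With $p_*$ defined as a maximum over the compact unit sphere, only homogeneity is actually required.

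\textbf{Step 3: the comparison problem.} When the reduced relation vanishes identically and $B_q$ and $M$ are unchanged, $K$ is the same space and the limit energy is $\tfrac12\|v\|_M^2-\langle v_*,v\rangle_M$. Its unique minimizer is $v_*$. The normalized distance between the two limiting responses is then $\|v_*-t_cv_*\|_M=1-t_c$.
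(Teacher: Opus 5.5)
There is a genuine gap in Step~1. The bound $J_\beta^0(v)\ge g(\|v\|_M)$ would need $\phi(v)\ge p_*^2\|v\|_M^{2k}$. The definition of $p_*$ as a maximum gives only the reverse inequality $\phi(v)\le p_*^2\|v\|_M^{2k}$, which you state yourself. Cauchy--Schwarz controls the load term. Off the ray through $v_*$, however, the penalty can be strictly smaller, so rotating away from $v_*$ can lower the energy. The reduction to the ray, and with it uniqueness, is therefore unsupported.

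This cannot be fixed by bookkeeping, and the restriction on $c$ is not harmless. Take $K=\R^2$, $M=I$, $s=1$, $W_S=1$, $k=2$ and $P_2(v)=v_1^2-v_2^2$ (for instance $q(u)=u_1^2-u_2^2$ and $E(u)=\tfrac12|u|^2$). Then $p_*=1$, $v_*=e_1$ and $\beta=c/2$. Using $ct_c^3=1-t_c$, one finds $J_\beta^0(t_ce_1)=\tfrac14t_c^2-\tfrac34t_c$, which tends to $0$ as $c\to\infty$. Meanwhile $J_\beta^0(\tfrac12,\tfrac12)=-\tfrac14$. So for large $c$ (already at $c=27$) the ray point is not the global minimum, and some bound on $c$ is essential.

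What is missing is a second-order estimate that controls off-ray competitors. This is where the paper actually uses Banach's theorem: not for attainment of $p_*$, but to bound the polarized form. Applied to $\langle a,P_k(\cdot)\rangle_{W_S^{-1}}$, it gives $\|D^2P_k(v)[w,w]\|_{W_S^{-1}}\le k(k-1)p_*\|v\|_M^{k-2}\|w\|_M^2$. Homogeneity alone does not yield this constant.

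Dropping the nonnegative term $\beta\|DP_k(v)[w]\|_{W_S^{-1}}^2$ then gives $D^2J_\beta^0(v)[w,w]\ge[1-(k-1)c\|v\|_M^{2k-2}]\|w\|_M^2$. Hence $J_\beta^0$ is strictly convex on the open $M$-unit ball whenever $c\le1/(k-1)$.

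Radial stationarity and Euler's identity give $\|v\|_M^2-\langle v_*,v\rangle_M+k\beta\|P_k(v)\|_{W_S^{-1}}^2=0$ at any global minimizer. Cauchy--Schwarz and $p_*>0$ then place every minimizer in that open ball. Your identity $D\phi(v_*)=2kp_*^2Mv_*$ shows that $tv_*$ is stationary precisely when $t+ct^{2k-1}=1$. Strict convexity on the convex ball allows at most one critical point there, so $t_cv_*$ is the unique global minimum.

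Your Steps~2 and~3 are correct and can be kept: existence and location of $t_c\in(0,1)$, the comparison minimum $v_*$, and the distance $1-t_c$.
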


\begin{proof}
Existence of a maximizer follows from compactness of the $M$-unit sphere
in $K$. For each unit vector $a$ in the residual inner product
$\langle\cdot,\cdot\rangle_{W_S^{-1}}$, polarize the scalar polynomial
$\langle a,P_k(v)\rangle_{W_S^{-1}}$ to a symmetric $k$-linear form.
Its diagonal norm is at most $p_*$, so Banach's theorem bounds its
multilinear norm by $p_*$. Taking the supremum over such $a$ gives
\begin{align}
 \|P_k(v)\|_{W_S^{-1}}&\le p_*\|v\|_M^k,\nonumber\\
 \|D^2P_k(v)[w,w]\|_{W_S^{-1}}
 &\le k(k-1)p_*\|v\|_M^{k-2}\|w\|_M^2,
 \qquad v,w\in K.
 \label{eq:compatibility-polynomial-derivative-bound}
\end{align}
The second estimate follows by placing $k-2$ copies of $v$ and two
copies of $w$ in the polarized form; its value at zero is interpreted
by continuity.

Twice differentiating the limiting penalty yields
\[
 D^2J_\beta^0(v)[w,w]
 =\|w\|_M^2+\beta\|DP_k(v)[w]\|_{W_S^{-1}}^2
    +\beta\langle P_k(v),D^2P_k(v)[w,w]\rangle_{W_S^{-1}}.
\]
The derivative bounds imply
\[
 D^2J_\beta^0(v)[w,w]
 \ge\bigl[1-(k-1)c\|v\|_M^{2k-2}\bigr]\|w\|_M^2.
\]
Hence the energy is strictly convex in the open $M$-unit ball.
Every global minimum lies in that ball. Indeed, radial stationarity
and Euler's identity for $P_k$ give
\[
 \|v\|_M^2-\langle v_*,v\rangle_M
       +k\beta\|P_k(v)\|_{W_S^{-1}}^2=0.
\]
Cauchy--Schwarz excludes $\|v\|_M>1$. Equality at unit norm would
require $v=v_*$ and $P_k(v_*)=0$, contradicting $p_*>0$.

Stationarity of $\|P_k\|_{W_S^{-1}}^2$ on the unit sphere, together
with its radial derivative, gives for every $w\in K$
\[
 \langle DP_k(v_*)[w],P_k(v_*)\rangle_{W_S^{-1}}
 =kp_*^2\langle v_*,w\rangle_M.
\]
Homogeneity therefore makes $t v_*$ stationary exactly when
$t+ct^{2k-1}=1$. The left side is strictly increasing for $t\ge0$;
its values at zero and one bracket 1. Its unique root lies in $(0,1)$.
The energy is coercive, so a global minimum exists. All such minima
lie in the strictly convex ball and must coincide with this stationary
point. The comparison energy is quadratic on $K$ with minimum $v_*$,
and $\|v_*\|_M=1$ proves the distance formula.
\end{proof}

The maximization defining $v_*$ selects a mathematical load; finding that
global polynomial maximizer is not asserted to be inexpensive.
For $k=3$ and $c=1/2$, \eqref{eq:general-penalty-mode} gives
$t_c\simeq0.8174710190$ and a normalized separation of approximately
$18.2529\%$. This extremal normalization differs from the conservative
explicit-load two-cell calculation, whose normalized separation is about
$1.069\times10^{-4}$. Each calculation has its own load, strength, and
stationarity evidence.

An exact-volume comparison supplies the zero reduced relation directly
through its total-volume identity. More generally, a smooth comparison map
of constant rank $\rank B_q$ near zero has the same property in its own
regular coordinates: its local image is a graph over
$\operatorname{range}B_q$, and the graph's only nearby point with zero
range projection is zero. Thus solving its regular equations also solves
its complete constraint equations.

\section{A complete cubic-cell model}
\label{sec:sharp-models}

The preceding sections connect the supported cubic to assembled
geometry and mechanical response. On a \(\mathbb Q_3\) cell, the entire
interior volume equation can also be written explicitly.
Its normal form identifies inactive coordinates and resolves the
regular, quadratic and cubic local error regimes. Both results below
concern the full interior space of one cell; assembled sharpness was
proved in \cref{cor:assembled-cubic-sharpness}.

Fix one affine cell \(X_e(\xi)=b_e+A_e\xi\) and prescribe zero displacement
on all its faces. Define
\[
\phi_0(t)=1-t^2,\qquad
\phi_1(t)=t(1-t^2).
\]
Every scalar Q3 polynomial with zero face trace is the product
\(\prod_{\ell=1}^3(1-\xi_\ell^2)\) times a Q1 polynomial. Consequently
every displacement in this cell-interior
space has the unique representation
\begin{equation}
u\circ X_e
=A_e\sum_{i,j,k\in\{0,1\}}
d_{ijk}\phi_i(\xi_1)\phi_j(\xi_2)\phi_k(\xi_3),
\qquad d_{ijk}\in\mathbb R^3.
\label{eq:q3-bubble-coordinates}
\end{equation}
There are eight scalar basis functions and 24 displacement coordinates.
The factor \(A_e\) fixes the same dimensionless amplitude convention as
\eqref{eq:physical-bubble-coefficient}.

\begin{theorem}[Complete Q3 interior volume normal form]
\label{thm:q3-volume-normal-form}
Use four Gauss points per coordinate and define the coefficient matrices
by their columns:
\begin{equation}
\mathsf A=[d_{001},d_{010},d_{100}],\qquad
\mathsf B=[d_{110},d_{101},d_{011}].
\label{eq:q3-coefficient-matrices}
\end{equation}
Then the complete cell-volume changes on \eqref{eq:q3-bubble-coordinates} are
\begin{equation}
q_{h,e}^I(u)=0,\qquad
q_{h,e}^Q(u)=\frac{V_e}{8}\chi_4\,
                   \mathsf A:\operatorname{cof}\mathsf B,\qquad
\chi_4=\frac{131072}{4501875}.
\label{eq:q3-complete-volume-normal-form}
\end{equation}
The vectors \(d_{000}\) and \(d_{111}\) do not enter this scalar equation.
\end{theorem}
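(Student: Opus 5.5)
The plan has three stages: reduce \(q^Q_{h,e}\) to a pure cubature defect, expand that defect in determinant brackets of the coefficient vectors, and then use parity and degree counting to show that only the brackets appearing in \(\mathsf A:\operatorname{cof}\mathsf B\) survive.

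\textbf{Reduction.} Write \(\widehat w=\sum d_{ijk}\psi_{ijk}\) with \(\psi_{ijk}=\phi_i(\xi_1)\phi_j(\xi_2)\phi_k(\xi_3)\), so that \(J_{y,e}=A_e(I_3+D_\xi\widehat w)\). Exact volume is fixed by the face trace, so \(q^I_{h,e}=0\) as in \eqref{eq:general-exact-volume-relation}. Multiplicativity gives \(q^Q_{h,e}(u)=\det A_e\,\mathcal G_4[\det(I_3+D_\xi\widehat w)-1]\). The linear and quadratic terms have coordinate degree at most \(2p=6\le7\), so \(\mathcal G_4\) integrates them exactly, and their exact integrals vanish by the null-Lagrangian identity applied to each homogeneous degree separately. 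This is \eqref{eq:bubble-pure-volume-cubic}. Hence
\[
q^Q_{h,e}=\frac{V_e}{8}\,\mathcal L[\det D_\xi\widehat w],
\]
and, because the exact integral of this determinant is also zero, \(\mathcal L\) may be replaced by \(\mathcal G_4\) wherever convenient.

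\textbf{Bracket expansion.} Expand by columns. The quantity \(\det D_\xi\widehat w\) equals the sum over ordered triples \((\alpha,\beta,\gamma)\) of multi-indices of
\[
\det[d_\alpha,d_\beta,d_\gamma]\;\partial_1\psi_\alpha\,\partial_2\psi_\beta\,\partial_3\psi_\gamma .
\]
Each scalar weight factorizes into three one-dimensional integrands of the form \(\phi_a'\phi_b\phi_c\), one per coordinate. I then impose two filters.

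\emph{Parity.} The function \(\phi_0\) is even and \(\phi_1\) is odd, and the symmetric Gauss rule kills odd integrands. So in each coordinate the number of indices equal to \(1\) among the three factors must be odd, that is, \(1\) or \(3\).

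\emph{Degree.} A one-dimensional factor has degree at most \(7\), and is then integrated exactly by the four-point rule, unless all three of its indices equal \(1\) (degree \(8\)). Consequently, if no coordinate carries three ones, the product rule equals the exact integral, and the total contribution of such triples is zero.

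Let the weight of a multi-index be its number of ones. The surviving brackets have total weight \(5\), \(7\) or \(9\), with at least one coordinate carrying three ones.
\begin{itemize}[nosep]
\item Weight \(9\) forces the bracket \(\det[d_{111},d_{111},d_{111}]=0\).
\item Weight \(5\) with summands \(1+2+2\) gives exactly the pairings of a column of \(\mathsf A\) with two columns of \(\mathsf B\).
\item The remaining splits must be eliminated: \(3+1+1\), \(3+2+0\) and \(1+1+3\) at weight \(5\), and \(3+2+2\) and \(3+3+1\) at weight \(7\).
\end{itemize}
Splits containing a repeated vector vanish trivially. For the remaining ones, I would check the coordinate pattern directly. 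For example, \(d_{111}\) contributes a \(1\) in every coordinate, so with \(d_{100}\) and \(d_{000}\) the coordinate counts are \((2,1,1)\), which violates parity. I expect each of the other cases to fail the parity test or the three-ones test in the same way. Some residual combinations may instead cancel only because \(\det D_\xi\widehat w\) is a divergence. This case elimination, which is what shows that \(d_{000}\) and \(d_{111}\) drop out, is the step I expect to be the main obstacle.

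\textbf{Identification of the form and normalization.} As a cross-check on the structure, the cubic transforms by \(\det G\) under \(d\mapsto Gd\) for \(G\in\GLp\). It is also covariant under simultaneous permutations of coordinates, together with the induced permutation of the columns of \(\mathsf A\) and \(\mathsf B\), and under the sign flips \(\xi_\ell\mapsto-\xi_\ell\). Once only \(\mathsf A\)\(\mathsf B\)\(\mathsf B\) brackets survive, these symmetries force the form \(c\,\mathsf A:\operatorname{cof}\mathsf B\) for a single constant \(c\). To fix \(c\), I would use the witness of \Cref{thm:tensor-volume-admission} with \(p=3\), \(n=4\):
\begin{itemize}[nosep]
\item \(d=3\), so \(f=g=\phi_1\);
\item \(W_1=e_1\psi_{100}\), \(W_2=e_2\psi_{110}\), \(W_3=e_3\psi_{101}\);
\item \(\mathsf A=[0,0,e_1]\) and \(\mathsf B=[e_2,e_3,0]\), so \(\mathsf A:\operatorname{cof}\mathsf B=e_1\cdot(e_2\times e_3)=1\).
\end{itemize}
\Cref{thm:tensor-volume-admission} then gives \(c=\chi_4\). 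A direct evaluation of \eqref{eq:gauss-witness-coefficient} confirms \(\chi_4=131072/4501875\).
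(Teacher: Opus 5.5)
Your proposal is correct, but it fixes the coefficient by a different route from the paper.

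\textbf{How the paper proceeds.} The paper groups the Cauchy--Binet expansion by triples of scalar basis functions. In each coordinate it computes the explicit Gauss moment vector:
\begin{itemize}
\item zero, by parity;
\item $C(3e_j-\mathbf 1_3)\in\mathbf 1_3^\perp$, for a single $\phi_1$;
\item $\lambda\mathbf 1_3$, for three $\phi_1$ factors.
\end{itemize}
Vanishing then follows from linear algebra: three columns in $\mathbf 1_3^\perp$, or two parallel columns. Each surviving triple receives the explicit coefficient $\pm9\lambda C^2$, which is checked to equal $\chi_4$.

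\textbf{How you proceed.} You use only qualitative information to isolate the three brackets of $\mathsf A:\operatorname{cof}\mathsf B$. This information is parity, the degree-$8$ versus degree-$\le7$ split applied to $\mathcal L$ rather than $\mathcal G_4$, and vanishing of brackets with a repeated vector. You then get the common coefficient from symmetry plus normalization by the witness of \Cref{thm:tensor-volume-admission}. With $p=3$, $n=4$ you correctly find $W_1=e_1\psi_{100}$, $W_2=e_2\psi_{110}$, $W_3=e_3\psi_{101}$ and $\mathsf A:\operatorname{cof}\mathsf B=1$.

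\textbf{Trade-offs.} Your route avoids all moment computations. The price is dependence on the earlier theorem and on a covariance fact you assert but do not check. A coordinate permutation $\sigma$ sends $\mathsf A\mapsto\mathsf AP$ and $\mathsf B\mapsto\mathsf BP$ with the same column permutation matrix $P$. This holds because column $j$ of $\mathsf A$ and of $\mathsf B$ are both labelled by coordinate $4-j$. Moreover $\det P=\operatorname{sgn}\sigma$ and $\mathsf AP:\operatorname{cof}(\mathsf BP)=\det P\,\mathsf A:\operatorname{cof}\mathsf B$, while the cubic also acquires the factor $\operatorname{sgn}\sigma$. Since $S_3$ permutes the three linearly independent brackets transitively, their coefficients coincide. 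The paper's route is self-contained, gives each sign directly, and independently confirms $9\lambda C^2=\chi_4$.

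\textbf{The case elimination is routine.} The step you flag as the main obstacle needs no divergence cancellation.
\begin{itemize}
\item Your illustrative triple $\{d_{111},d_{100},d_{000}\}$ has weight $4$, so it is not among your listed cases. Also, $3+1+1$ and $1+1+3$ are the same split.
\item For the $3+2+0$ and $3+1+1$ splits with distinct vectors, the coordinate counts are $(2,2,1)$ up to order, which fails parity.
\item For $3+2+2$ with distinct vectors, the counts are $(3,2,2)$ up to order, which also fails parity.
\item The split $3+3+1$, and any split with a repeated vector, gives a zero bracket.
\item In the $2+2+1$ split, parity (equivalently your sign flips) keeps only the weight-one index whose $1$ lies in the coordinate shared by the two weight-two indices.
\end{itemize}
So exactly the three brackets of $\mathsf A:\operatorname{cof}\mathsf B$ survive, not every pairing of an $\mathsf A$ column with two $\mathsf B$ columns.
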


\begin{proof}
The exact cell volume is fixed by the zero boundary trace. By
\eqref{eq:bubble-pure-volume-cubic}, only the cubic displacement term
contributes to the Gauss4 volume change.
We compute that term for all eight scalar basis functions.

For a triple of one-dimensional factors
\((\phi_{\alpha_1},\phi_{\alpha_2},\phi_{\alpha_3})\), with
\(\alpha_r\in\{0,1\}\), use the one-dimensional four-point Gauss rule
\(\mathcal G_4^{(1)}\) to form the three-vector of derivative moments
\[
m_r=\mathcal G_4^{(1)}
\left[\phi_{\alpha_r}'\prod_{s\ne r}\phi_{\alpha_s}\right],
\qquad r=1,2,3.
\]
If the number of factors \(\phi_1\) is even, parity gives \(m=0\).
If there is exactly one such factor, in slot \(j\), the polynomials have
degree at most six and exact integration gives
\[
m=C(3e_j-\mathbf1_3),\qquad C=\frac{32}{105}.
\]
Here \(e_j\) is the \(j\)-th coordinate vector of \(\mathbb R^3\).
If all three factors are \(\phi_1\), the exact moment is zero and
\[
m=\lambda\mathbf1_3,\qquad \lambda=\frac{128}{3675}.
\]
Indeed, \(\phi_1'\phi_1^2\) has degree eight with leading coefficient
\(-3\). The first-missed-moment calculation in
\Cref{thm:tensor-volume-admission} gives
\(\mathcal G_4^{(1)}[\phi_1'\phi_1^2]
=3\int_{-1}^1\pi_4^2=128/3675\), where \(\pi_4\) is the monic
degree-four Legendre polynomial.

For three scalar tensor basis functions, the integrated determinant of
their three gradients is the determinant of the three moment columns,
one column for each coordinate.
With exactly one \(\phi_1\) in a coordinate, that column belongs to
\(\mathbf1_3^\perp\). Three such columns have determinant zero.
Two columns from the all-\(\phi_1\) case are parallel and also give zero.
Thus a nonzero determinant requires exactly one all-\(\phi_1\) coordinate
and two distinct single-\(\phi_1\) slots in the other coordinates.

Writing the scalar basis indices in lexicographic order, the only nonzero
triples and their coefficients are
\[
\begin{array}{c|r}
(001,011,101)&-9\lambda C^2\\
(010,011,110)& 9\lambda C^2\\
(100,101,110)&-9\lambda C^2 .
\end{array}
\]
Their common absolute coefficient is
\(9\lambda C^2=131072/4501875=\chi_4\).
This classifies all \(\binom83=56\) distinct basis triples.
Write \(\widehat v=A_e^{-1}(u\circ X_e)\) for the dimensionless parent
displacement. Expanding its gradient determinant over three-column
minors, the Cauchy--Binet formula gives
\begin{align*}
\mathcal G_4[\det D_\xi\widehat v]
={}&-\chi_4\det[d_{001},d_{011},d_{101}]\\
&+\chi_4\det[d_{010},d_{011},d_{110}]
-\chi_4\det[d_{100},d_{101},d_{110}].
\end{align*}
Expansion of the cofactor by columns identifies this expression as
\(\chi_4\,\mathsf A:\operatorname{cof}\mathsf B\).
None of the nonzero triples contains \(d_{000}\) or \(d_{111}\).
Finally the physical push-forward in \eqref{eq:q3-bubble-coordinates}
multiplies this determinant integral by
\(\det A_e=V_e/8\), proving \eqref{eq:q3-complete-volume-normal-form}.
\end{proof}

The inactive coefficients still change the displacement and material energy.
Their absence concerns only this scalar volume equation.
The 18 remaining coefficients have a useful explicit feasible-distance
description.

\subsection{A constructive bound and its local exponents}

Put
\[
\mathfrak p(\mathsf A,\mathsf B)=\mathsf A:\operatorname{cof}\mathsf B,
\qquad
\mathcal Z=\{(\mathsf A,\mathsf B):\mathfrak p(\mathsf A,\mathsf B)=0\},
\]
and use the product Frobenius norm
\(\|(\mathsf A,\mathsf B)\|_F^2
=\|\mathsf A\|_F^2+\|\mathsf B\|_F^2\).
Distance in this norm is denoted \(\operatorname{dist}_F\).
The six inactive coefficients can be held fixed in every correction.

\begin{proposition}[Sharp Q3 error-bound strata]
\label{prop:q3-error-strata}
For every pair of coefficient matrices,
\begin{equation}
\operatorname{dist}_F((\mathsf A,\mathsf B),\mathcal Z)
\le\sqrt2\,|\mathfrak p(\mathsf A,\mathsf B)|^{1/3}.
\label{eq:q3-constructive-error-bound}
\end{equation}
At a feasible point, let \(\theta\) be the largest exponent for which
there are a constant \(C_{\rm loc}>0\) and a neighborhood throughout
which
\[
\operatorname{dist}_F((\mathsf A,\mathsf B),\mathcal Z)
\le C_{\rm loc}|\mathfrak p(\mathsf A,\mathsf B)|^\theta
\]
holds. Then \(\theta=1\) when
\(D\mathfrak p\ne0\), \(1/2\) when \(D\mathfrak p=0\) and
\((\mathsf A,\mathsf B)\ne(0,0)\), and \(1/3\) when
\(\mathsf A=\mathsf B=0\).
These conclusions include arbitrary values of the inactive coefficients.
More generally, for a scalar block-tri-affine polynomial with nonzero
cubic part, the optimal local exponent at a feasible point is \(1\),
\(1/2\), or \(1/3\) according as its first nonzero derivative has order
one, two, or three.
\end{proposition}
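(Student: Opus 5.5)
The plan splits into the global estimate \eqref{eq:q3-constructive-error-bound} and the three local regimes. Throughout, the inactive coefficients \(d_{000},d_{111}\) are frozen, since by \Cref{thm:q3-volume-normal-form} they do not enter \(\mathfrak p\).

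For the global bound, write \(\mathsf A=[a_1,a_2,a_3]\) and \(\mathsf B=[b_1,b_2,b_3]\). The cofactor expansion exhibits \(\mathfrak p\) as a sum of determinants, each containing one column of \(\mathsf A\) and two columns of \(\mathsf B\). Every determinant is multilinear in its columns. I would therefore perturb three distinct columns along unit matrix directions:
\[
 a_1\mapsto a_1+s_1e_1,\qquad b_2\mapsto b_2+s_2e_2,\qquad b_3\mapsto b_3+s_3e_3 .
\]
The resulting function of \((s_1,s_2,s_3)\) is tri-affine: each amplitude enters a single column, so no squares occur. Its mixed coefficient comes only from \(a_1\cdot(b_2\times b_3)\) and equals \(\pm e_1\cdot(e_2\times e_3)=\pm1\). \Cref{lem:tri-affine-four-corner} then gives a zero with \(\|s\|_\infty\le|\mathfrak p|^{1/3}\). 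This yields the constant \(\sqrt3\) at once, as in \eqref{eq:scalar-trilinear-error-bound}.

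The stated \(\sqrt2\) is the step I expect to be the main obstacle. I would first try to exploit where the root lies on the four-corner segment, or choose a sign pattern so that one amplitude can be taken small. Failing that, I would perturb only the two \(\mathsf B\) columns along a rank-two direction while pairing them with a fixed \(\mathsf A\) column, and optimize the resulting one-parameter cubic. If neither route gives \(\sqrt2\), I would prove the bound with \(\sqrt3\) and flag the sharper constant as needing a separate argument.

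For the local exponents at a feasible point \(x\), I would treat the cases in order.
\begin{itemize}
\item \(D\mathfrak p(x)\ne0\): the implicit function theorem along the gradient direction gives \(\theta\ge1\). The bound \(|\mathfrak p(y)|\le C\operatorname{dist}_F(y,\mathcal Z)\), which holds because \(\mathfrak p\) is Lipschitz near \(x\), excludes any \(\theta>1\).
\item \(x=0\): the upper exponent \(1/3\) follows from \eqref{eq:q3-constructive-error-bound}. Optimality follows exactly as in \Cref{cor:assembled-cubic-sharpness}. Take \(w\) with \(\mathfrak p(w)\ne0\); since \(\mathcal Z\) is a closed cone, \(\operatorname{dist}_F(tw,\mathcal Z)=t\operatorname{dist}_F(w,\mathcal Z)\), while \(\mathfrak p(tw)=t^3\mathfrak p(w)\).
\item Singular nonzero points, where \(D\mathfrak p(x)=0\) and \(x\ne0\): see below.
\end{itemize}

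For singular nonzero points I would first show that \(D^2\mathfrak p(x)\ne0\). Since \(\mathfrak p\) is cubic, \(D^2\mathfrak p(x)[h,h]=D^3\mathfrak p[x,h,h]\). Some column of \(x\) is nonzero, and pairing it with suitable unit columns in the other two determinant slots gives a nonzero value. Because pure block second derivatives vanish (block-tri-affine structure), a nonzero Hessian is a mixed form and is indefinite. Hence there are directions \(h_\pm\) with \(D^2\mathfrak p(x)[h_\pm,h_\pm]=\pm c\), \(c>0\).

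For the upper bound with exponent \(1/2\), at a nearby \(y\) I would solve \(\mathfrak p(y+th_\pm)=0\), choosing the sign opposite to \(\mathfrak p(y)\). Here \(\mathfrak p(y+th)=\mathfrak p(y)+tD\mathfrak p(y)[h]+\tfrac{t^2}{2}(\pm c+O(|y-x|))+O(t^3)\), with \(|D\mathfrak p(y)|=O(|y-x|)\). An intermediate-value argument on \(|t|\le C|\mathfrak p(y)|^{1/2}\) should then produce a root. The delicate point is the linear term. If \(|D\mathfrak p(y)|\) dominates \(|\mathfrak p(y)|^{1/2}\), I would instead step along the gradient, which gives distance at most \(|\mathfrak p(y)|/|D\mathfrak p(y)|\le|\mathfrak p(y)|^{1/2}\).

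For the lower bound, take \(y_s=x+sh_+\). Then \(\mathfrak p(y_s)\asymp s^2\) and \(|D\mathfrak p|=O(s)\) on an \(O(s)\)-ball, so \(\operatorname{dist}_F(y_s,\mathcal Z)\gtrsim s\asymp|\mathfrak p(y_s)|^{1/2}\), which excludes every exponent larger than \(1/2\).

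The general block-tri-affine statement uses the same three arguments, organized by the order of the first nonzero derivative.
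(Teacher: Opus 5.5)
Your treatment of the three local regimes is essentially sound, but the proposal does not prove the global estimate \eqref{eq:q3-constructive-error-bound} as stated. You obtain the constant $\sqrt3$, not $\sqrt2$, and the routes you sketch would not close the gap.

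Used as a black box on a fixed triple of unit column directions, the four-corner lemma cannot beat $\sqrt3$. For $f(s)=r+s_1s_2s_3$, every zero has $\|s\|_2\ge\sqrt3\,|r|^{1/3}$ by the arithmetic--geometric mean inequality. So locating the root on the segment or changing the sign pattern does not help. Moving only columns of $\mathsf B$ cannot work uniformly either. For $\mathsf A=\epsilon I_3$ and $\mathsf B=I_3$ the residual is $3\epsilon$, yet the distance from $I_3$ to $\{\mathsf B':\operatorname{tr}\cof\mathsf B'=0\}$ does not shrink with $\epsilon$.

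The missing idea is a dichotomy between two feasible competitors adapted to the point. It uses the fact that $\mathfrak p=\mathsf A:\cof\mathsf B$ is linear in $\mathsf A$ with gradient $\cof\mathsf B$. Put $c_B=\|\cof\mathsf B\|_F$; if $c_B=0$ the pair is already feasible. Otherwise:
\begin{itemize}
\item projecting $\mathsf A$ onto $(\cof\mathsf B)^\perp$ gives a feasible pair at distance $|\mathfrak p|/c_B$;
\item replacing $\mathsf B$ by its best rank-one approximation annihilates the cofactor, at cost $(\sigma_2^2+\sigma_3^2)^{1/2}\le\sqrt2\,\sigma_2\le\sqrt2\,c_B^{1/2}$, because $c_B^2=\sigma_1^2\sigma_2^2+\sigma_1^2\sigma_3^2+\sigma_2^2\sigma_3^2\ge\sigma_2^4$.
\end{itemize}
Splitting at $c_B=|\mathfrak p|^{2/3}$ gives $\min\{|\mathfrak p|/c_B,\sqrt2\,c_B^{1/2}\}\le\sqrt2\,|\mathfrak p|^{1/3}$.

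For the local exponents your $\sqrt3$ bound is enough. It gives the cube-root upper bound at $\mathsf A=\mathsf B=0$, and, through \eqref{eq:scalar-trilinear-error-bound}, in the cubic stratum of the general block-tri-affine statement. Your cone and Lipschitz sharpness arguments are valid. Two remarks on the square-root stratum:
\begin{itemize}
\item The vanishing diagonal that forces indefiniteness must be taken entrywise, or over the column blocks $(a_i,b_i)$, not over the blocks $\mathsf A,\mathsf B$. The $\mathsf B\mathsf B$ block of the Hessian is $D\cof(\mathsf A)$, which is generally nonzero. The paper's argument is that $\mathfrak p$ is affine in each scalar entry, so the Hessian has zero trace.
\item Your case split between a curvature step and a gradient step works if the threshold on $|D\mathfrak p(y)|$ is of order $\sqrt{\Lambda}\,|\mathfrak p(y)|^{1/2}$, with $\Lambda$ a local Hessian bound. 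The gradient step then finds a zero within about $2|\mathfrak p|/|D\mathfrak p|$, not $|\mathfrak p|/|D\mathfrak p|$. The paper avoids the split altogether. Since $D^2\mathfrak p(z)[-e,-e]=D^2\mathfrak p(z)[e,e]$, one can flip the sign of the curvature direction so that its first directional derivative is nonpositive. Taylor's theorem then gives $\mathfrak p(z+te_-)\le\mathfrak p(z)-\mu_*t^2/2$ directly.
\end{itemize}
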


\begin{proof}
Write \(c_B=\|\operatorname{cof}\mathsf B\|_F\).
If \(c_B=0\), the pair is already feasible.
Otherwise, orthogonally projecting \(\mathsf A\) onto
\((\operatorname{cof}\mathsf B)^\perp\) gives a feasible pair at distance
\(|\mathfrak p|/c_B\).
A second feasible pair is obtained by keeping only the largest singular
component of \(\mathsf B\).
If its singular values are \(\sigma_1\ge\sigma_2\ge\sigma_3\ge0\), this
correction costs \((\sigma_2^2+\sigma_3^2)^{1/2}\).
Since
\[
c_B^2=\sigma_1^2\sigma_2^2+\sigma_1^2\sigma_3^2
                         +\sigma_2^2\sigma_3^2\ge\sigma_2^4,
\]
that cost is at most \(\sqrt2\,c_B^{1/2}\).
Consequently
\[
\operatorname{dist}_F((\mathsf A,\mathsf B),\mathcal Z)
\le\min\{|\mathfrak p|/c_B,\sqrt2\,c_B^{1/2}\}
\le\sqrt2\,|\mathfrak p|^{1/3}.
\]
The last inequality follows by splitting at \(c_B=|\mathfrak p|^{2/3}\).

We next examine a feasible base point.
In the Frobenius coordinates,
\[
D\mathfrak p=(\operatorname{cof}\mathsf B,\,
                         \mathsf B\times\mathsf A),
\]
where \(\times\) is the cofactor polarization from
\Cref{sec:discrete-volume-setting}.
At a regular point, the implicit-function theorem supplies a local linear
error bound \citep{dontchev2014implicit}. A transverse ray has residual
and feasible distance both of first order, proving optimality of exponent one.

At a critical point, represent the Hessian on the two nine-entry matrix
blocks. It is
\begin{equation}
D^2\mathfrak p=
\begin{bmatrix}
0&D\operatorname{cof}(\mathsf B)\\
D\operatorname{cof}(\mathsf B)&D\operatorname{cof}(\mathsf A)
\end{bmatrix}.
\label{eq:q3-normal-form-hessian}
\end{equation}
The cofactor derivative is self-adjoint because it is the Hessian of the
determinant. The map \(R\mapsto D\operatorname{cof}(R)\) is injective:
for each entry of \(R\), evaluation of \(D^2\det(R)\) on two matrix units
in the complementary rows and columns recovers that entry up to sign.
Thus \eqref{eq:q3-normal-form-hessian} is nonzero whenever
\((\mathsf A,\mathsf B)\ne(0,0)\).
Every diagonal Hessian entry is zero, since \(\mathfrak p\) is affine in
each scalar entry. A nonzero symmetric trace-zero Hessian has both signs.

Choose unit coefficient directions \(e_+,e_-\) with strictly positive
and negative second variations. Continuity supplies a neighborhood and
\(\mu_*>0\) on which those variations are respectively at least \(\mu_*\)
and at most \(-\mu_*\).
At a nearby point with \(\mathfrak p>0\), choose the sign of \(e_-\) so
the first directional derivative is nonpositive. Along its segment,
Taylor's theorem gives
\[
\mathfrak p(z+te_-)\le\mathfrak p(z)-\mu_* t^2/2.
\]
Here \(z\) denotes the coefficient pair.
A zero occurs at distance at most \(\sqrt{2\mathfrak p(z)/\mu_*}\).
For negative residual use \(e_+\).
Shrinking the starting neighborhood keeps these segments inside the
curvature neighborhood, proving the local square-root upper bound.

For sharpness, choose a ray from the critical base with nonzero second
variation. Its residual is \(\Theta(t^2)\).
If feasible points lay at distance \(o(t)\), their quadratic Taylor
expansion at that base would force this second variation to vanish.
The feasible base itself gives the matching order-\(|t|\) upper distance.
This excludes every exponent above \(1/2\).

At \(\mathsf A=\mathsf B=0\), take
\(\mathsf A=tI_3,\mathsf B=tI_3\).
The residual is \(3t^3\), while homogeneity of the closed zero set gives
distance \(|t|\operatorname{dist}_F((I_3,I_3),\mathcal Z)>0\).
Together with \eqref{eq:q3-constructive-error-bound}, this proves the
optimal cube-root exponent. The inactive coordinates have no effect on
these distances or Taylor expansions.

For the stated generalization, block-tri-affinity gives zero diagonal
Hessian blocks. At a critical point with nonzero Hessian, the same
trace-zero and sign-direction argument proves the sharp square-root
bound. The regular case again follows from the implicit-function theorem.
If the first two derivatives vanish at a feasible base \(z_0\), the exact
Taylor expansion is \(f(z_0+\delta)=P_3(\delta)\), where \(P_3\) is its
nonzero homogeneous trilinear part.
Equation \eqref{eq:scalar-trilinear-error-bound} gives the cube-root
upper bound, and a nonzero-cubic ray in the translated zero cone proves
sharpness exactly as above.
\end{proof}

Fixed-dimensional norm equivalence transfers these local exponents to the
nodal or physical displacement norm on the fixed cell.
At an interior point of an admissible coefficient neighborhood, sufficiently
small corrections remain admissible. Uniformity across a mesh instead uses
the explicitly scaled construction of \Cref{thm:cellwise-feasibility}.
The scalar strata describe this complete local volume equation; they do not
classify every singular point of a coupled assembled constraint system.

\section{Tensor-element evidence}
\label{sec:evidence}

The numerical comparisons examine the added volume relation, the
constructive correction and the singular mechanical response.
The principal assembled model is the unit cube split into two affine
cells along the first coordinate. Continuous \(\mathbb Q_3\) displacements
with equispaced nodes and a complete outer clamp have \(60\) free
coordinates; both pressure equations are retained. Four Gauss points
per coordinate define raw volume and five integrate it exactly.
A one-cell \(\mathbb Q_4\) comparison retains its one pressure equation
and \(81\) interior displacement coordinates, with five-point raw and
six-point exact-volume rules.

Both models use the isochoric neo-Hookean energy of
\cref{sec:discrete-volume-setting}, with \(\mu=2\), under the same material
rule in each raw/exact comparison: Gauss four for \(\mathbb Q_3\),
Gauss five for \(\mathbb Q_4\).
Calculations use an independent polynomial finite-element
implementation in IEEE binary64. Rational integration and selected
\(80\)-digit checks are described in \cref{app:numerical-details}.

The two-cell observations in \cref{fig:tensor-rank-response} summarize
the link between rank and response. For the rank comparison,
\(u_\tau=\tau(U_2+U_3)\), using the supported directions in the first
cell. For the response comparison, let
\[
 M=D^2E_h(0),\qquad
 v_0=\frac{U_1+U_2+U_3}{\|U_1+U_2+U_3\|_M},\qquad
 \ell=Mv_0,\qquad \|v\|_M=(v^TMv)^{1/2}.
\]
The load is \(\varepsilon\ell\) and the bulk coefficient is
\(\kappa=\beta/\varepsilon^4\). The plotted response uses the
conservative strength \(\beta=3.4539763764\); its complete parameter
table is retained in \cref{app:tensor-response-protocol}.

\begin{figure}[H]
\centering
\includegraphics[width=\linewidth]{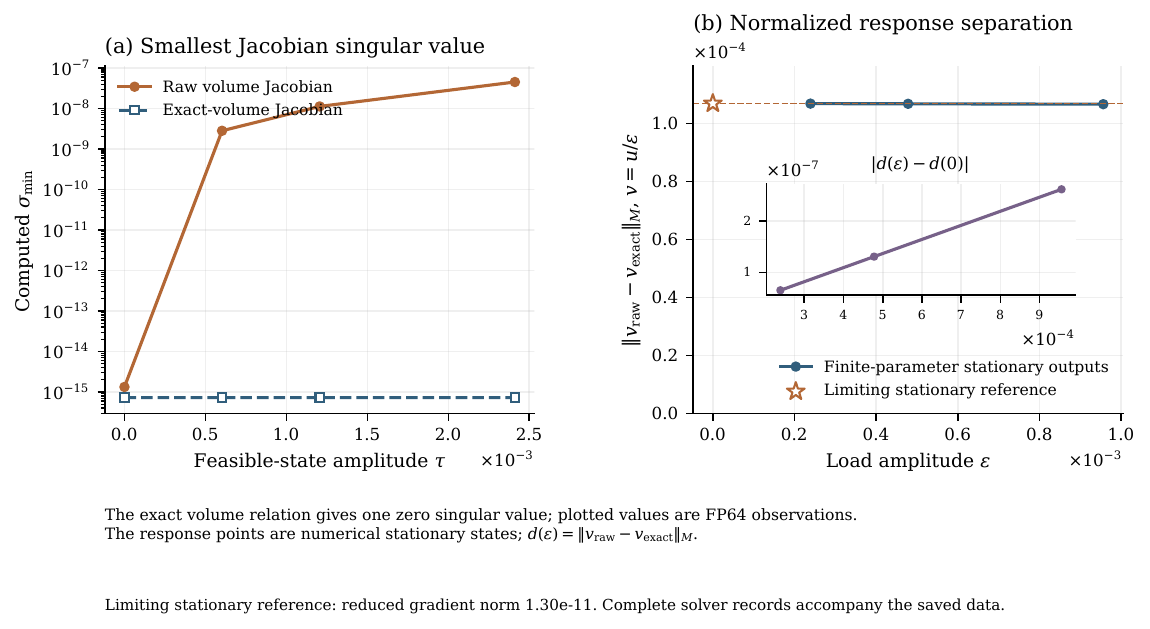}
\caption{Assembled tensor-element rank and response. The left panel shows
the smallest volume-Jacobian singular values at the feasible states
$u_\tau$. The right panel gives the $M$-norm separation of normalized raw
and exact-volume stationary responses, with the independently computed
limiting stationary reference. Its reduced gradient residual is reported.
The inset shows the absolute difference between the finite and limiting
separations. The exact-map null singular value is observed at rounding scale.}
\label{fig:tensor-rank-response}
\end{figure}

\subsection{The additional assembled constraint}

At three successively halved feasible-state amplitudes, the smallest
raw Jacobian singular value decreases from
\(4.518206883\times10^{-8}\) to \(2.823879839\times10^{-9}\);
the exact-map values remain near \(7.28\times10^{-16}\).
The raw value falls by approximately a factor of four per halving, as predicted by
\cref{thm:assembled-volume-geometry}. The \(\mathbb Q_4\) comparison
has the same quadratic scale. The observed rest-state offset is kept
in the detailed derivative comparison, rather than interpreted as
a change in the asymptotic order.

\Cref{app:tensor-rank-records} gives every amplitude, the missing-component
action, the two-cell flux minor, exact coefficient values and the
higher-order checks. Independent rational integration verifies the
specified fourth- and sixth-order witnesses; exact-order and
low-transverse-order controls have zero supported defect.

\subsection{Constructed feasibility and unchanged exact volumes}

Let \(X=(X_1,X_2,X_3)\), \(e_1=(1,0,0)^T\), and define the shared-face
direction
\[
 \widetilde w(X)=\min(2X_1,2-2X_1)
              4X_2(1-X_2)4X_3(1-X_3)e_1.
\]
Its outer trace vanishes. The sum of absolute parent-monomial
coefficients bounds each nonzero gradient entry by eight, so take
\(L_w=8\sqrt3\), \(w=\widetilde w/L_w\) and \(u_0=\delta w\).
The three starting scales are
\(\delta=1.379475369\times10^{-12}\) and its halves and quarters,
chosen from the maximum-gradient correction estimate.

At the largest scale, the four-corner correction changes the two raw
volumes from magnitudes \(4.42467\times10^{-14}\) to
\(-2.55\times10^{-21}\) and \(6.03\times10^{-21}\).
Their exact changes remain \(+4.42467\times10^{-14}\) and
\(-4.42467\times10^{-14}\), with all face traces fixed.
The correction \(\Delta u\) has \(H^1\) seminorm
\(2.23509\times10^{-4}\) and full norm \(2.24053\times10^{-4}\).
Its binary64 polynomial maximum-gradient bound is \(0.0158\),
without an outward-rounded interval certificate.
Zero is a closer feasible field if interior-face changes are allowed:
the norm measures this fixed-trace construction.

\begin{figure}[H]
\centering
\includegraphics[width=\linewidth]{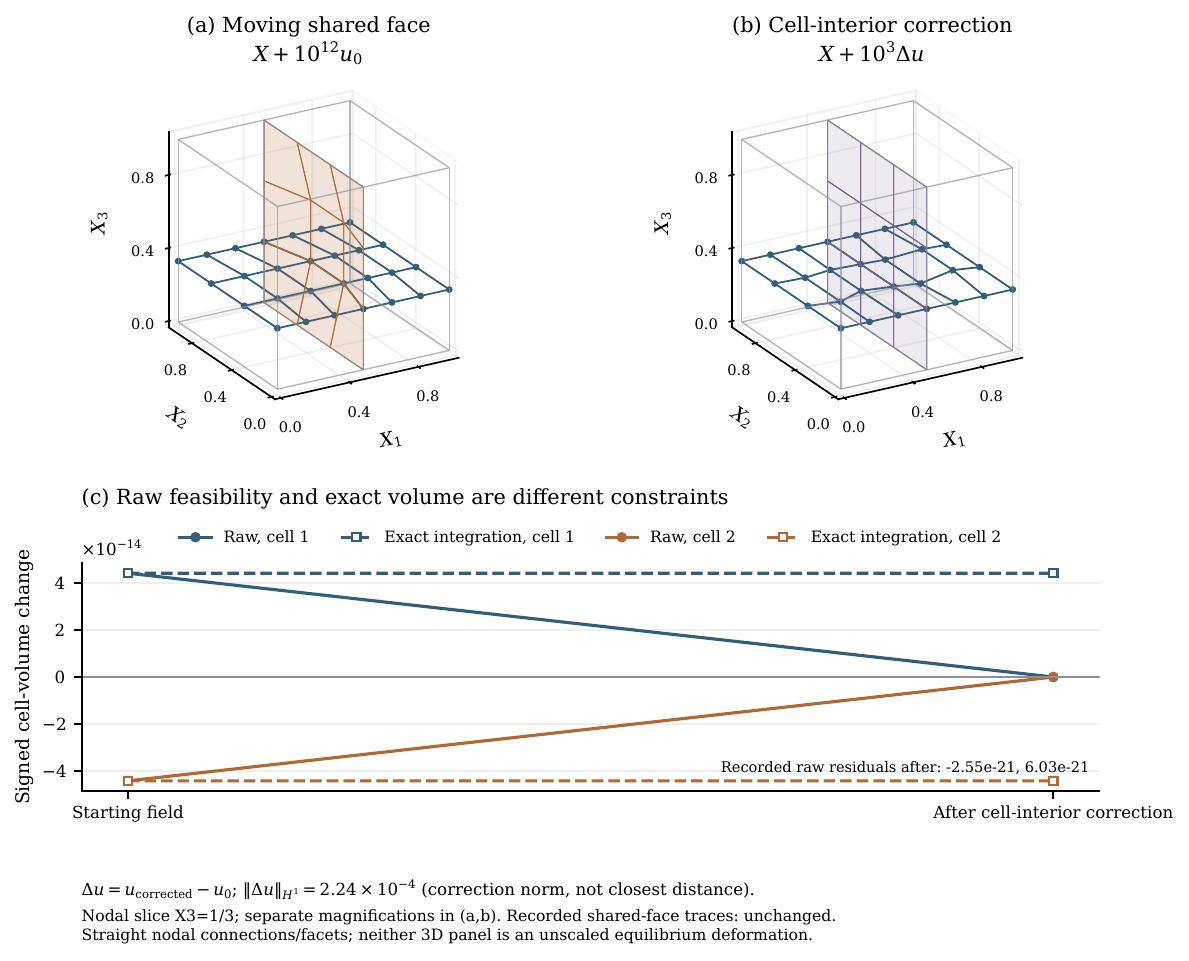}
\caption{Moving-face and cell-interior volume mechanisms at the largest
starting scale. Panel (a) displays $X+10^{12}u_0$; panel (b) displays
$X+10^3\Delta u$, where $\Delta u$ is the correction alone.
The nodal slice is $X_3=1/3$; segments and facets join retained nodes.
Panel (c) compares the signed volume changes before and after correction.
The displayed $H^1$ quantity is the full displacement-and-gradient
norm of the constructed correction.
Different geometric magnifications are stated in the panels.}
\label{fig:moving-face-correction}
\end{figure}

\subsubsection{Refinement}
\label{sec:refined-volume-observations}

Divide the unit cube into \(N^3\) affine cells, \(N\in\{1,2,4\}\),
with continuous \(\mathbb Q_3\) displacements, the complete outer clamp
and the same Gauss-four/five volume comparison.
One starting field is the supported ray
\(\tau(U_1+U_2+U_3)\) in the first cell. The other interpolates
\[
 \widetilde w(X)=16\prod_{i=1}^3X_i(1-X_i)e_1,\qquad
 w=\widetilde w/\max\{1,L_{\widetilde w}\},\qquad u_0=\delta w,
\]
where \(L_{\widetilde w}\) bounds the gradient by its polynomial
coefficients. This field moves interior faces for \(N=2,4\).
Four dyadic scales start at
\(\tau_0=9.246120462\times10^{-4}\) and
\(\delta_0=3.287737203\times10^{-12}\).
The complete amplitude and represented-input bounds appear in
\cref{app:refinement-protocol}. In all \(24\) pairs, the starting and
corrected deformations are injective and orientation preserving,
with displacement-gradient operator norm below \(1/4\).

\Cref{fig:journal-correction} shows cubic supported-ray residuals and
linear constructed correction norms. The moving-face fields become
raw feasible to the stated numerical scale while their exact
cell-volume changes remain fixed; independent \(80\)-digit evaluation
preserves them through all \(65\) reported significant digits.
The one-cell full-space field has exact zero volume defect.
Its residual near \(10^{-30}\) increases slightly after a rounding-driven
correction. That control, and the distinction between a constructed norm
and a closest distance, remain part of the comparison.

\begin{figure}[H]
\centering
\includegraphics[width=\linewidth]{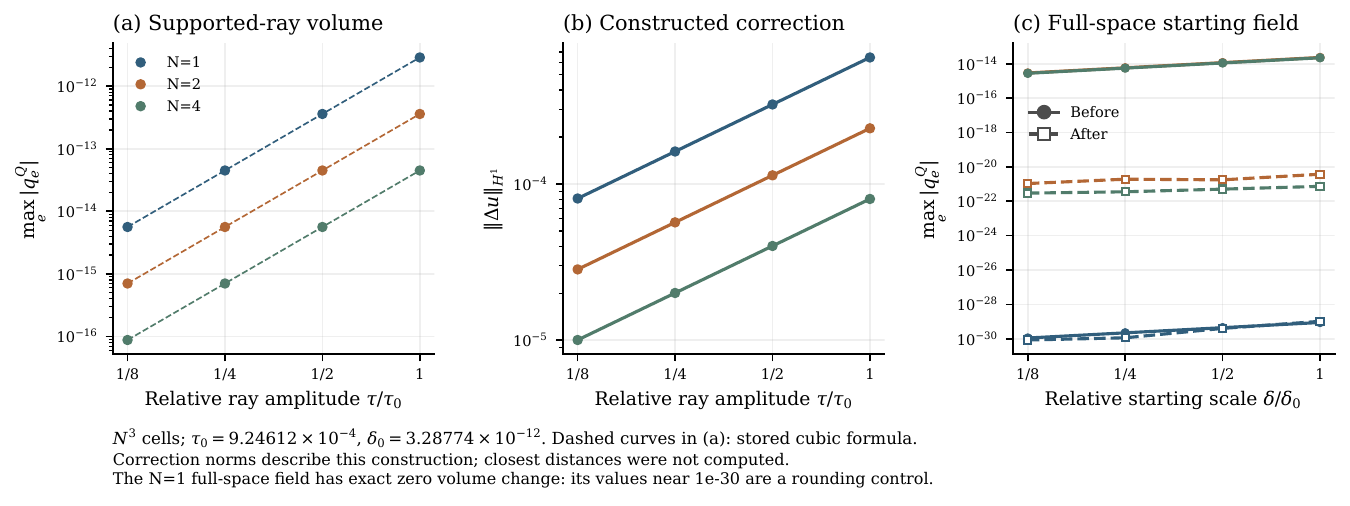}
\caption{Refined $\mathbb Q_3$ volume and correction observations.
Here $N$ is the number of cells per coordinate, and each horizontal
coordinate is divided by its recorded first amplitude.
The dashed curves in (a) are the cubic formula for the supported ray.
Panel (b) gives the $H^1$ norm of the constructed change $\Delta u$.
Panel (c) preserves the direct raw residuals before and after correction,
including the one-cell exact-zero control.
Closest-point distances were not computed.}
\label{fig:journal-correction}
\end{figure}

\subsection{Singular and fixed-bulk response}
\label{sec:tensor-response-evidence}

The response experiment uses the witness load \(\ell=Mv_0\) above,
with the corresponding \(M\)-normalized triple on each model.
The six conservative two-cell solves have scaled residual at most
\(1.61\times10^{-12}\). Their normalized separation approaches
\(1.069054666\times10^{-4}\), about \(0.0107\%\).
The limiting reference has a small nonzero gradient, whose effect
is quantified in \cref{sec:conservative-response-precision}.
These are stationary calculations; the general minimum theorem
does not certify finite-parameter minimality.

\subsubsection{Penalty strength and the fixed-bulk control}
\label{sec:extended-response-observations}

Let $P_3=C_h|_{\ker B_h}$ be the reduced cubic from
\cref{sec:singular-penalty}, and let $W_S=\sum_eV_e=1$.
Write $b$ for the Frobenius upper bound on
$|P_3(v)|/\sqrt{W_S}$ on the $M$-unit sphere, and define
\[
 s_0=\frac{|P_3(v_0)|}{\sqrt{W_S}},\qquad
 \beta_{\rm s}=\frac1{24b^2},\qquad
 \beta_{\rm l}=\frac1{6s_0^2}.
\]
For $\mathbb Q_3$, these strengths are $3.4539763764$ and
$16994.9915686$; for $\mathbb Q_4$, they are $0.6586461161$ and
$112440.650192$.
The conservative choice has the reduced unit-ball Hessian lower estimate
$3/4$. The load-normalized choice is a stronger stationary-branch
illustration without that convexity guarantee.
The loads are the explicit witnesses defined above.

Each model and strength uses four dyadic load amplitudes
\(\varepsilon_j=\varepsilon_0/2^j\), \(j=0,1,2,3\), with first
amplitudes and stationary initialization specified in
\cref{app:tensor-response-protocol}.
The fixed-bulk control uses \(\kappa/\mu=1000\).

\begin{table}[H]
\centering\small
\caption{Extended normalized stationary-response differences
$d_j=\|u_Q/\varepsilon_j-u_I/\varepsilon_j\|_M$.
The limiting column is a separate reduced stationary reference.
For fixed bulk, the leading raw/exact gap is zero and no measured
nonzero limiting gap is asserted.}
\label{tab:extended-response}
\begin{tabular}{@{}llrrrr@{}}
\toprule
Model & Regime & $d_0$ & $d_3$ & Limiting reference & Largest $\kappa/\mu$\\
\midrule
$\mathbb Q_3$ & $\beta_{\rm s}$ & $1.06643\,10^{-4}$ & $1.06873\,10^{-4}$ & $1.06905\,10^{-4}$ & $8.46743\,10^{15}$\\
$\mathbb Q_3$ & $\beta_{\rm l}$ & $0.182606$ & $0.182805$ & $0.182833$ & $4.16633\,10^{19}$\\
$\mathbb Q_3$ & Fixed bulk & $7.91385\,10^{-9}$ & $1.23779\,10^{-10}$ & $0$ & $1000$\\
$\mathbb Q_4$ & $\beta_{\rm s}$ & $6.48265\,10^{-6}$ & $6.50389\,10^{-6}$ & $6.50692\,10^{-6}$ & $4.66038\,10^{14}$\\
$\mathbb Q_4$ & $\beta_{\rm l}$ & $0.156291$ & $0.156405$ & $0.156421$ & $1.30586\,10^{21}$\\
$\mathbb Q_4$ & Fixed bulk & $8.24132\,10^{-14}$ & $3.36969\,10^{-13}$ & $0$ & $1000$\\
\bottomrule
\end{tabular}
\end{table}

The \(48\) stationary outputs include the six cases in
\cref{tab:tensor-response}. Six exact-volume solves report no progress
at scaled residuals \(4.92\times10^{-13}\)--\(1.11\times10^{-12}\);
these six outcomes remain marked as non-successes.
Every field passes the whole-cell admissibility check in
\cref{app:refinement-protocol}, with displacement-gradient operator
norm below \(1/4\).

The load-normalized comparisons give approximately $18.3\%$ and $15.6\%$
normalized separation. Their absolute displacement differences still shrink:
at the smallest strong loads the $M$-norm differences are
$2.18460\times10^{-5}$ and $1.26692\times10^{-5}$.
The large bulk/shear ratios in \cref{tab:extended-response} are part of
this conclusion. At fixed bulk, the $\mathbb Q_3$ gap decreases toward
zero, while the $\mathbb Q_4$ values fluctuate at the numerical floor.
No convergence rate is inferred from that latter sequence.

\begin{figure}[H]
\centering
\includegraphics[width=\linewidth]{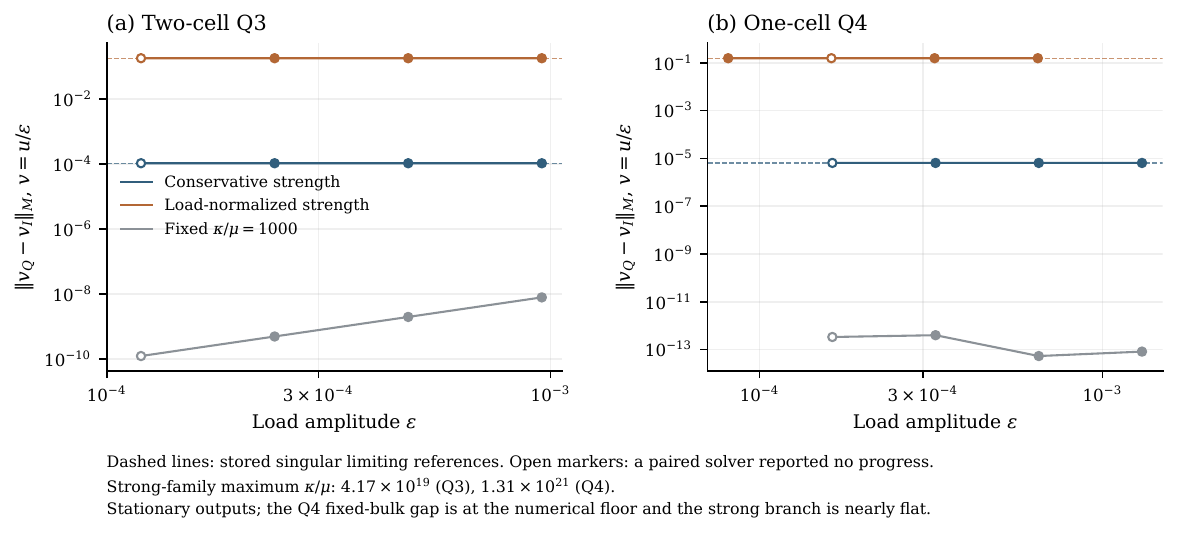}
\caption{Finite-strain $\mathbb Q_3$ and $\mathbb Q_4$ response comparisons.
The vertical quantity is the $M$-norm gap between normalized displacements
$v_Q=u_Q/\varepsilon$ and $v_I=u_I/\varepsilon$.
Dashed lines are stored singular limiting references.
Open markers preserve pairs with a solver non-success flag.
The load-normalized families use extreme bulk stiffness; the fixed-bulk
curves provide their separate control. These are stationary observations,
with no finite local or global minimum certification.}
\label{fig:journal-response}
\end{figure}

Pressure recovery and finite-to-limit sensitivity are assessed in
\cref{sec:strong-response-precision}. The stronger \(\mathbb Q_4\)
branch is nearly flat: its broad raw/exact gap is better resolved
than the fine correction to its limiting stationary reference.

\section{A second-order contrast on curved quadratic tetrahedra}
\label{sec:cubature}

Curved quadratic tetrahedra provide the second-order contrast to the
tensor construction. On the explicit template below, volume and its
first derivative are exact at rest, while a quadratic compatibility
defect changes feasible motion and produces a quartic penalty limit.
We develop the local defect, assemble the template and recover exact
volume from the same four Jacobian samples.

We first derive the local identity needed by that construction.
On the parent
\[
\That=\{\xi\in\R^3:\xi_i\ge0,\ \xi_1+\xi_2+\xi_3\le1\},\qquad |\That|=1/6,
\]
write \(\lambda_0=1-\xi_1-\xi_2-\xi_3\), \(\lambda_i=\xi_i\) for
\(i=1,2,3\), and let
\(\mathbb P_r(\That)\) denote total degree at most \(r\).
Tet10 uses \(N_i=\lambda_i(2\lambda_i-1)\) at the four vertices and
\(N_{ij}=4\lambda_i\lambda_j\) on its six edges. For nodal positions
\(X_A,y_A\in\R^3\), \(A=1,\ldots,10\), set
\[
X_h=\sum_A N_AX_A,\qquad y_h=\sum_A N_Ay_A,\qquad
J_X=\nabla_\xi X_h,\quad J_y=\nabla_\xi y_h.
\]
For \(\det J_X>0\), the physical deformation gradient and the gradients
of quadratic variations \(u_h,v_h\) are
\begin{equation}
F=J_yJ_X^{-1},\qquad H=J_uJ_X^{-1},\qquad L=J_vJ_X^{-1}.
\label{eq:isoparametric-F}
\end{equation}
Here \(J_u,J_v\) are the parent gradients.
The tetrahedral rule \(\Qfour\) has weight \(1/24\) at each permutation of
\begin{equation}
(\alpha_4,\beta_4,\beta_4,\beta_4),\qquad
\alpha_4=\frac{5+3\sqrt5}{20},\qquad
\beta_4=\frac{5-\sqrt5}{20}.
\label{eq:q4-points}
\end{equation}
It is exact through total degree two \citep{shunn2012symmetric}.

\begin{proposition}[Determinant pullback]
\label{prop:det-pullback}
For fixed \(J_X\in\GLp\) and \(J_y=FJ_X,\ J_u=HJ_X,\ J_v=LJ_X\),
\begin{equation}
\det J_X\,D^2\det(F)[H,L]=D^2\det(J_y)[J_u,J_v].
\label{eq:det-pullback}
\end{equation}
\end{proposition}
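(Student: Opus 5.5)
The plan is to use multiplicativity of the determinant together with its homogeneity, and then read off second derivatives by polarization. For any matrices \(H,L\) and scalars \(s,t\), the pulled-back parent gradients satisfy
\[
J_y+sJ_u+tJ_v=(F+sH+tL)J_X .
\]
Taking determinants gives the identity
\[
\det(J_y+sJ_u+tJ_v)=\det J_X\,\det(F+sH+tL),
\]
valid for all \(s,t\in\R\). Both sides are polynomials in \((s,t)\), since the determinant is polynomial on all of \(\mathbb M\). Hence no invertibility of the perturbed matrices is needed, and only \(\det J_X\ne0\) enters. That condition holds because \(J_X\in\GLp\).

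Next, differentiate this identity once in \(s\) and once in \(t\), and evaluate at \(s=t=0\). The mixed partial of \(\det(A+sH'+tL')\) at zero is by definition the bilinear second derivative \(D^2\det(A)[H',L']\). The left side therefore yields \(D^2\det(J_y)[J_u,J_v]\). The right side yields \(\det J_X\,D^2\det(F)[H,L]\), because \(\det J_X\) is a constant factor independent of \(s,t\). Equating the two gives \eqref{eq:det-pullback}.

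For consistency with \cref{sec:discrete-volume-setting}, I would also record the explicit form \(D^2\det(A)[H,L]=A:(H\times L)\), obtained from the cofactor polarization. This gives an alternative check. The identity \((HJ_X)\times(LJ_X)=(H\times L)\,\cof J_X\) follows by polarizing \(\cof(RS)=\cof R\,\cof S\). Combining it with \(\cof J_X=\det J_X\,J_X^{-T}\) gives \(J_yJ_X^{-1}\,\det J_X\), which contracts to the same result.

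There is no real obstacle. The only point needing care is to justify differentiating the polynomial identity in two independent directions, which is immediate because both sides are polynomial in \((s,t)\).
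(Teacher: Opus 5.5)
Your proof is correct and is the paper's argument: you differentiate $\det(J_y+sJ_u+tJ_v)=\det(F+sH+tL)\det J_X$ once in each of $s$ and $t$ at zero. Your additional cofactor-polarization check is also valid, but the proof does not need it.
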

\begin{proof}
Differentiate \(\det(J_y+sJ_u+tJ_v)=\det(F+sH+tL)\det J_X\)
once in each of \(s,t\) at zero.
\end{proof}

An affine matrix field is determined by its vertex values:
\(A(\lambda)=\sum_{a=0}^3\lambda_aA_a\).
Put \(\bar A=\frac14\sum_aA_a\), write
\(\mathcal I_{\That}[f]=\int_{\That}f\), and define
\begin{equation}
d_4(A)=\Qfour[\det A]-\mathcal I_{\That}[\det A].
\label{eq:defect-definition}
\end{equation}

\begin{theorem}[Exact symmetric four-point determinant defect]
\label{thm:cubic-defect}
For every affine matrix field \(A\) on \(\That\),
\begin{equation}
d_4(A)=c_4\sum_{a=0}^3\det(A_a-\bar A),\qquad
c_4=\frac{3\sqrt5-5}{1800}.
\label{eq:cubic-defect}
\end{equation}
\end{theorem}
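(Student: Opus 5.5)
The plan is to split \(A\) into its mean and a fluctuation, use the exactness of \(\Qfour\) through total degree two to kill every term except the purely cubic one, and then evaluate that cubic term by a symmetry argument. Write \(A(\lambda)=\bar A+\sum_a\lambda_a D_a\) with \(D_a=A_a-\bar A\), which uses \(\sum_a\lambda_a=1\). The fluctuations satisfy \(\sum_aD_a=0\). Expanding \(\det(\bar A+G)\) with \(G=\sum_a\lambda_aD_a\) gives four pieces:
\[
\det\bar A,\qquad \cof\bar A:G,\qquad \bar A:\cof G,\qquad \det G,
\]
whose polynomial degrees in \(\xi\) are \(0,1,2,3\). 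The rule \(\Qfour\) is exact through total degree two, so the first three pieces contribute nothing to \(d_4\). Hence \(d_4(A)=d_4(G)\), where \(G\) is a homogeneous cubic form in the barycentric coordinates. This reduction already explains why only the \(D_a\) appear.

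Next I would expand \(\det G\) trilinearly:
\[
\det G=\sum_{a,b,c}\lambda_a\lambda_b\lambda_c\,\tau(D_a,D_b,D_c),
\]
where \(\tau\) is the symmetric trilinear polarization of \(\det\), so that \(\tau(H,H,H)=\det H\). This gives
\[
d_4=\sum_{a,b,c}\mu_{abc}\,\tau(D_a,D_b,D_c),\qquad
\mu_{abc}=(\Qfour-\mathcal I_{\That})[\lambda_a\lambda_b\lambda_c].
\]
Both the rule and the simplex are invariant under permutations of the four barycentric coordinates. The moment defect \(\mu_{abc}\) therefore depends only on the multiplicity pattern of the indices: \((3)\), \((2,1)\) or \((1,1,1)\). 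Call the three values \(\mu_3,\mu_{21},\mu_{111}\).

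To relate these values I would use the identity \(\lambda_a\lambda_b\sum_c\lambda_c=\lambda_a\lambda_b\). Its right side has degree two, so it is integrated exactly and \(\sum_c\mu_{abc}=0\) for every pair \(a,b\). Taking \(a=b\) gives \(\mu_3+3\mu_{21}=0\). Taking \(a\ne b\) gives \(2\mu_{21}+2\mu_{111}=0\). Consequently \(\mu_{abc}=\mu_{111}\,s_{abc}\), where \(s_{abc}\) is a fixed signed pattern. An equivalent and cleaner route is the expansion \(\mu_{abc}=\alpha\,\delta_{abc}+(\text{terms with a free index})\). Terms with a free index vanish against \(\sum_aD_a=0\), since \(\tau\) is trilinear. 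The pure diagonal term then leaves
\[
d_4=c\sum_a\tau(D_a,D_a,D_a)=c\sum_a\det D_a .
\]
Concretely, I would show that the tensor \(\mu_{abc}\) decomposes as \(c\,\delta_{abc}\) plus terms such as \(\delta_{ab}\) times a constant, or a constant alone, using the row-sum relations and symmetry. Every non-diagonal term is killed on contraction with a zero-sum family.

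The main step is verifying that decomposition and computing \(c\). A symmetric 3-tensor on 4 indices with the given symmetry is
\[
x\,\delta_{abc}+y\,(\delta_{ab}+\delta_{bc}+\delta_{ac})+z .
\]
Contracting with \(D\)'s summing to zero kills the \(y\) and \(z\) terms, leaving \(c=x=\mu_3-3\mu_{21}+2\mu_{111}\). I would then compute \(\mu_3\), for instance via the \(\xi_1^3\) moment defect, and use the relations above to obtain \(c=4\mu_3/3\). For the moment itself, \(\Qfour[\xi_1^3]=\tfrac1{24}(\alpha_4^3+3\beta_4^3)\) and \(\int_{\That}\xi_1^3=1/120\). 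A short surd calculation should then produce \((3\sqrt5-5)/1800\), and that arithmetic is the step I expect to need the most care.
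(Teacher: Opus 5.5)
Your route is essentially the paper's. You center at $\bar A$, discard every term of degree at most two by the exactness of $\Qfour$, sort the cubic moment errors by multiplicity pattern, and use $\sum_aD_a=0$ to collapse the trilinear sum to $\sum_a\det D_a$. The resulting coefficient $\mu_3-3\mu_{21}+2\mu_{111}$ matches the paper's $e_3-3e_{21}+2e_{111}$. The paper reaches it through $S_{21}=-S_3$ and $S_{111}=S_3/3$, which is exactly your contraction argument.

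Your partition-of-unity relations $\sum_c\mu_{abc}=0$ are a small genuine economy. They give $\mu_{21}=-\mu_3/3$ and $\mu_{111}=\mu_3/3$, so only one moment defect has to be computed. The paper instead evaluates all three from its table.

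Your final simplification, however, is wrong. Substituting your own relations gives
\[
c=\mu_3-3\Bigl(-\frac{\mu_3}{3}\Bigr)+2\,\frac{\mu_3}{3}=\frac83\,\mu_3,
\]
not $\tfrac43\mu_3$.

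With $\alpha_4^3=(20+9\sqrt5)/200$ and $\beta_4^3=(5-2\sqrt5)/200$ you get $\Qfour[\xi_1^3]=(35+3\sqrt5)/4800$. Hence $\mu_3=(3\sqrt5-5)/4800$ and $c=\tfrac83\mu_3=(3\sqrt5-5)/1800=c_4$. Your factor $\tfrac43$ would give half the stated constant, and the surd calculation would then fail to match.

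A one-line check settles the factor. Take $D_a=d_aI_3$ with $d=(3,-1,-1,-1)$. Then $G=(4\lambda_0-1)I_3$, so $d_4=64\mu_3$, while $\sum_a\det D_a=24$. This forces $c=\tfrac83\mu_3$.

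With that correction your argument is complete.
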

\begin{proof}
Let \(B_a=A_a-\bar A\), so \(\sum_aB_a=0\), and
\(B(\lambda)=\sum_a\lambda_aB_a\).
All terms of \(\det(\bar A+B)\) containing \(\bar A\) have degree
at most two in \(\lambda\). Hence
\begin{equation}
d_4(A)=(\Qfour-\mathcal I_{\That})[\det B].
\label{eq:centered-error}
\end{equation}
Let \(e_3,e_{21},e_{111}\) be the respective errors for
\(\lambda_i^3,\lambda_i^2\lambda_j,\lambda_i\lambda_j\lambda_k\),
with distinct indices where indicated. Symmetry and the moment formula
\[
\int_{\That}\prod_a\lambda_a^{m_a}
=\frac{\prod_am_a!}{(3+\sum_am_a)!}
\]
give
\[
\begin{array}{c|c|c}
f&\Qfour[f]&\mathcal I_{\That}[f]\\ \hline
\lambda_i^3&(\alpha_4^3+3\beta_4^3)/24&1/120\\
\lambda_i^2\lambda_j&
(\alpha_4^2\beta_4+\alpha_4\beta_4^2+2\beta_4^3)/24&1/360\\
\lambda_i\lambda_j\lambda_k&
(3\alpha_4\beta_4^2+\beta_4^3)/24&1/720 .
\end{array}
\]
Denote normalized determinant polarization by
\(\mathcal P_{\det}\), with
\(\mathcal P_{\det}(M,M,M)=\det M\), and set
\[
S_3=\sum_i\det B_i,\quad
S_{21}=\sum_{i\ne j}\mathcal P_{\det}(B_i,B_i,B_j),\quad
S_{111}=\sum_{i<j<k}\mathcal P_{\det}(B_i,B_j,B_k).
\]
The zero sum of the \(B_i\) gives \(S_{21}=-S_3\) and
\(S_{111}=S_3/3\). Expanding \(\det B\) yields
\(d_4(A)=(e_3-3e_{21}+2e_{111})S_3\).
Substitution of the displayed moments gives the coefficient \(c_4\).
\end{proof}

For \(A=J_y\), write its vertex values and mean as \(J_a,\bar J\).
For variations \(v_h,w_h\), use \(V_a,\bar V\) and \(W_a,\bar W\)
for their parent-gradient vertex values and means.
Derivatives of \(d_4(J_y)\) below act on the composed nodal map
\(y_h\mapsto d_4(\nabla_\xi y_h)\).

\begin{corollary}[Polarized cubic tangent]
\label{cor:polarized-defect}
For quadratic variations,
\begin{equation}
D^2d_4(J_y)[v_h,w_h]
=c_4\sum_{a=0}^3D^2\det(J_a-\bar J)
[V_a-\bar V,W_a-\bar W].
\label{eq:polarized-defect}
\end{equation}
The formula also holds at singular centered matrices.
\end{corollary}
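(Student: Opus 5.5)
The plan is to differentiate the identity of \Cref{thm:cubic-defect} twice in the nodal variables. The key observation is that the map \(y_h\mapsto\nabla_\xi y_h\) is linear, and the map from an affine matrix field to its vertex values and mean is linear as well. Concretely, \(J_a\) and \(\bar J\) depend linearly on the nodal positions \(y_A\). A nodal variation \(v_h\) therefore changes \(J_a\) by \(V_a\) and \(\bar J\) by \(\bar V\). Applying \Cref{thm:cubic-defect} to the affine field \(A=J_y+sJ_v+tJ_w\) then gives the polynomial identity
\[
d_4(J_y+sJ_v+tJ_w)=c_4\sum_{a=0}^3\det\bigl((J_a-\bar J)+s(V_a-\bar V)+t(W_a-\bar W)\bigr)
\]
for all real \(s,t\). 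This uses that the parent gradient of a quadratic (Tet10) field is affine, so each member of the family is admissible in \Cref{thm:cubic-defect}.

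Next I take \(\partial_s\partial_t\) at \(s=t=0\) on both sides. On the left this gives \(D^2d_4(J_y)[v_h,w_h]\) by definition of the composed nodal derivative. On the right, each summand yields \(D^2\det(J_a-\bar J)[V_a-\bar V,W_a-\bar W]\), by the same bilinear-differentiation step used in \Cref{prop:det-pullback}. Together these give \eqref{eq:polarized-defect}.

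For the final sentence of the statement, I note that \(\det\) is a polynomial on all of \(\mathbb M\). Its second derivative \(D^2\det(M)[H,L]=M:(H\times L)\) is therefore defined and continuous at every \(M\), invertible or not, since \(H\times L\) is the cofactor polarization from \Cref{sec:discrete-volume-setting}. Likewise \Cref{thm:cubic-defect} is a polynomial identity with no invertibility hypothesis. So neither side of \eqref{eq:polarized-defect} requires \(J_a-\bar J\in\GLp\), and in particular the \(\cof M=(\det M)M^{-T}\) formula is never needed.

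There is no real obstacle here. The only step needing care is to make explicit that the centering \(A\mapsto(A_a-\bar A)_a\) is linear. Because of this, the variation of the centered vertex matrix is exactly \(V_a-\bar V\), and no extra terms arise from differentiating \(\bar J\).
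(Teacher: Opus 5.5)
Your proposal is correct and follows the paper's own argument. It uses the linear dependence of the vertex gradients and their mean on the nodal values, differentiates the identity of \Cref{thm:cubic-defect} twice, and appeals to the determinant being polynomial on all matrices to cover singular centered matrices; your explicit two-parameter family $J_y+sJ_v+tJ_w$ just writes out the differentiation step that the paper states in one line.
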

\begin{proof}
The vertex gradients and their means depend linearly on nodal values.
Differentiate \eqref{eq:cubic-defect}; the determinant is polynomial
on all matrices.
\end{proof}

\subsection{Exact value and force with a nonzero tangent defect}

\begin{proposition}[A second-variation defect invisible to first-order checks]
\label{prop:invisible-first-order-defect}
If all centered vertex Jacobians of a quadratic map have rank at most
one, then its determinant cubature defect and its first variation vanish.
Its second variation can be nonzero.
\end{proposition}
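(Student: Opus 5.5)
The plan rests on the exact defect formula of \cref{thm:cubic-defect} and its polarized form \cref{cor:polarized-defect}. Write $B_a=J_a-\bar J$ for the centered vertex Jacobians of the current map. By \eqref{eq:cubic-defect}, the value is
\[
d_4(J_y)=c_4\sum_a\det B_a .
\]
Differentiating in a variation $v_h$ gives
\[
Dd_4(J_y)[v_h]=c_4\sum_a \cof(B_a):(V_a-\bar V).
\]
If every $B_a$ has rank at most one, then $\det B_a=0$. Every $2\times2$ minor of $B_a$ also vanishes, so $\cof B_a=0$. Hence the defect and its first variation vanish for every variation, which proves the first claim.

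For the second variation we use \eqref{eq:polarized-defect}:
\[
D^2d_4(J_y)[v_h,w_h]=c_4\sum_a (V_a-\bar V):\bigl(B_a\times(W_a-\bar W)\bigr).
\]
Here $\times$ is the cofactor polarization, which is linear in $B_a$. For a rank-one matrix it need not vanish. For example, $B=e_1e_1^T$ gives $B\times L$ with entries built from the $2\times2$ minors of $L$ in rows and columns $2,3$.

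The remaining task is an explicit example, and this is the only real obstacle. It needs a centered configuration $\{B_a\}$ with $\sum_aB_a=0$, each $B_a$ of rank at most one, that is realized as centered vertex gradients of a quadratic map. We also need variations whose centered gradients make the sum nonzero. We would first pick the base map so that $J_y$ is affine with vertex values
\[
J_a=\bar J+s_a e_1e_1^T,\qquad \sum_a s_a=0,\quad s_0\ne0 .
\]
This is realized by a quadratic displacement $y_h=X+\tfrac12\,\xi^TG\xi\,e_1$, with quadratic part chosen so that only the $(1,1)$ entry varies affinely in $\lambda$. One choice is $\tfrac12\xi_1^2e_1$, whose gradient varies as $\xi_1 e_1e_1^T$; its centered vertex values are multiples of $e_1e_1^T$.

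Next we take $v_h=w_h$ with centered gradients $V_a-\bar V$ that have a nonzero lower $2\times2$ block varying across vertices. A suitable choice is a quadratic field with parent gradient $\xi_1(e_2e_2^T+e_3e_3^T)$; its components are $\tfrac12\xi_1^2$-type terms adjusted appropriately. The sum then reduces to
\[
c_4\sum_a s_a\,t_a^2\cdot 2 ,
\]
where $t_a$ are the centered vertex values of $\xi_1$. With $t=(-\tfrac14,\tfrac34,-\tfrac14,-\tfrac14)$ and $s=t$, we get $\sum t_a^3\ne0$. This is the step to check by direct computation. Since $c_4\ne0$, the second variation is nonzero. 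Any affine geometry with $\det J_X>0$ transfers the conclusion to physical gradients by \cref{prop:det-pullback}.
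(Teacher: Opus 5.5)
Your first claim is argued exactly as in the paper: rank at most one forces $\det B_a=0$ and $\cof B_a=0$, so $d_4$ and $Dd_4$ vanish. Your base map, with $X$ the identity parent map, is the paper's $y=(\xi_1+\alpha\xi_1^2,\xi_2,\xi_3)$ at $\alpha=1/2$. Its centered vertex Jacobians are $t_ae_1e_1^T$ with $t=(-\tfrac14,\tfrac34,-\tfrac14,-\tfrac14)$, and $\det J_y=1+\xi_1>0$.

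The gap is the variation, which is precisely the realizability step you identified as the obstacle.

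\textbf{Why your choice fails.} No vector field has parent gradient $\xi_1(e_2e_2^T+e_3e_3^T)$. Its second row would need $\partial_{\xi_1}v_2=0$ and $\partial_{\xi_2}v_2=\xi_1$, which contradicts equality of mixed partials. The gradient of a quadratic field is affine and fixed by its vertex values, so prescribing the whole centered gradient as $t_a(e_2e_2^T+e_3e_3^T)$ is equally impossible. Terms of type $\tfrac12\xi_1^2$ only produce $\xi_1$-derivatives, so they cannot supply the diagonal entries you need.

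\textbf{The repair.} Prescribe only the lower-right $2\times2$ block, which is all the formula sees. For $B=b\,e_1e_1^T$,
$B\times L=b\bigl(L_{33}e_2e_2^T-L_{32}e_2e_3^T-L_{23}e_3e_2^T+L_{22}e_3e_3^T\bigr)$.
Hence $L{:}(B\times L)=2b(L_{22}L_{33}-L_{23}L_{32})$, and the first column of $L$ drops out. Take $v=w=(0,\xi_1\xi_2,\xi_1\xi_3)$, whose gradient is $\xi_1(e_2e_2^T+e_3e_3^T)+\xi_2e_2e_1^T+\xi_3e_3e_1^T$. Its centered lower block is $t_aI_2$, so
$D^2d_4(J_y)[v,v]=2c_4\sum_at_a^3=3c_4/4\ne0$.

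This is the paper's example up to polarization. The paper takes $v=(0,\xi_1\xi_2,0)$ and $w=(0,0,\xi_1\xi_3)$ and obtains $D^2d_4(J_y)[v,w]=3\alpha c_4/4$. Those two fields have vanishing diagonal values, so your diagonal value is twice the mixed one at $\alpha=1/2$.

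Finally, your closing appeal to \cref{prop:det-pullback} is not needed, since $d_4$ is already defined on parent Jacobians.
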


\begin{proof}
Rank at most one gives zero determinant and zero cofactor for every
centered vertex Jacobian. The defect formula and its first derivative
therefore vanish.
For an explicit example, take parent coordinates
$\xi=(\xi_1,\xi_2,\xi_3)$, fix a scalar $\alpha>0$, and define
\[
y(\xi)=(\xi_1+\alpha\xi_1^2,\xi_2,\xi_3),\quad
v(\xi)=(0,\xi_1\xi_2,0),\quad
w(\xi)=(0,0,\xi_1\xi_3).
\]
For $\alpha>0$ the Jacobian determinant is $1+2\alpha\xi_1>0$.
The centered Jacobians have only one nonzero row. Direct substitution
in \eqref{eq:polarized-defect} gives
\[
D^2d_4(J_y)[v,w]
=2\alpha c_4\left[(3/4)^3+3(-1/4)^3\right]
=3\alpha c_4/4,
\]
which is nonzero.
\end{proof}

A one-component quadratic perturbation of an affine map has this
rank-one centered structure. The example explains why exact volume
and force observations do not certify the tangent quadrature.
For a mesh, let $e$ index its cells and write $J_{y,e}$ for the current
parent-coordinate Jacobian on cell $e$.
Under a complete clamp, a uniform shift of the prescribed linear
pressure bias contributes only a null-Lagrangian constant to the
exactly integrated energy. In four-point integration its tangent
contribution is that shift times $\sum_eD^2d_4(J_{y,e})$.
An exactly compensated reference form can retain that term in the declared
discrete equation \citep{liu2026twominor}. Replacing the scalar pressure
volume by exact integration defines a different discretization.
\Cref{sec:pressure-tangent} gives exact-volume restoration;
\cref{tab:pressure-routes} compares it with compensated reference forms.

The affine-annihilation result and its complete second-jet
factorization are proved in \cref{sec:tet10-quotient}, alongside the
supporting pressure-tangent analysis.

\subsection{The assembled volume maps}
\label{sec:pressure-volume}

Use a conforming Tet10 mesh with positive reference Jacobians and a
complete outer clamp. Let \(X,y\) denote the assembled reference and
current nodal configurations, and \(X_e,y_e\) their quadratic cell maps
from \(\That\), with parent gradients \(J_{X,e},J_{y,e}\).
For $r\in\{Q,I\}$, define
\[
V_e^Q(y)=\Qfour[\det J_{y,e}],\qquad
V_e^I(y)=\int_{\That}\det J_{y,e}\,d\xi,
\qquad V_{e0}^r=V_e^r(X).
\]
The vector of cell-volume changes is denoted here by
\begin{equation}
g_e^r(y)=V_e^r(y)-V_{e0}^r,\qquad
g^r(y)=q_h^r(y-X).
\label{eq:tet10-volume-notation}
\end{equation}
Derivatives act on free nodal coordinates. The boundary-volume identity
from \cref{sec:discrete-volume-setting} gives
\begin{equation}
\sum_e g_e^I(y)=0.
\label{eq:exact-volume-identity}
\end{equation}
This holds throughout the admissible connected neighborhood of $X$.
By \Cref{thm:cubic-defect}, the raw aggregate instead satisfies
\begin{equation}
\sum_e g_e^Q(y)
=\sum_e\bigl[d_4(J_{y,e})-d_4(J_{X,e})\bigr].
\label{eq:aggregate-volume-defect}
\end{equation}

For the mechanical example, use unit shear modulus and the same
four-point isochoric energy for both volume maps. With
$\bar I_1(F)=(\det F)^{-2/3}F{:}F$, set
\begin{equation}
E_{\rm iso}^Q(y)=
\frac12\sum_e\Qfour[
(\bar I_1(J_{y,e}J_{X,e}^{-1})-3)\det J_{X,e}].
\label{eq:template-isochoric-energy}
\end{equation}
These comparisons retain every cell-average equation and require no
prescribed pressure bias.

\subsection{A necessary second-order compatibility condition}

The feasible tangent cone consists of limits of scaled feasible
displacements. Taylor expansion compares it with the Jacobian kernel.

\begin{lemma}[Compatibility of a redundant constraint]
\label{lem:constraint-compatibility}
Let $N,m$ be positive integers and let $g:\R^N\to\R^m$ be twice
continuously differentiable near
$x_0$, with $g(x_0)=0$. Put $B=Dg(x_0)$ and let
$a\in\R^m$ satisfy $a^TB=0$. If a differentiable feasible curve
$x(t)$ has $x(0)=x_0$, $g(x(t))=0$, and derivative $v=x'(0)$,
then
\[
Bv=0,\qquad v^TD^2(a^Tg)(x_0)v=0.
\]
The same conditions hold for any limit
$v=\lim_j(x_j-x_0)/t_j$ with $g(x_j)=0$ and $t_j\downarrow0$.
\end{lemma}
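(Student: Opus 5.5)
The plan is to expand the scalar function $t\mapsto a^Tg(x(t))$ to second order; it vanishes identically along the curve. Write $\phi(t)=a^Tg(x(t))$. Since $g(x(t))=0$, also $g(x(t))-g(x_0)=0$, and the first-order Taylor expansion $g(x(t))=tBv+o(t)$ gives $Bv=0$ after dividing by $t$ and letting $t\to0$.

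For the second condition, I would not assume $x$ is twice differentiable. Instead I use the Taylor expansion of $a^Tg$ at $x_0$ with Peano remainder, which is available because $g$ is $C^2$:
\[
a^Tg(x)=a^Tg(x_0)+a^TB(x-x_0)+\tfrac12(x-x_0)^TD^2(a^Tg)(x_0)(x-x_0)+o(\|x-x_0\|^2).
\]
The constant term vanishes because $g(x_0)=0$, and the linear term vanishes because $a^TB=0$. This is the key point: the redundancy of the row relation removes the first-order term, so no information about $x-x_0$ beyond first order is needed.

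Next I substitute $x=x(t)$. Differentiability gives $x(t)-x_0=tv+o(t)$, so $\|x(t)-x_0\|=O(t)$. The left side is zero, hence
\[
0=\tfrac12(x(t)-x_0)^TD^2(a^Tg)(x_0)(x(t)-x_0)+o(t^2).
\]
Dividing by $t^2$ and letting $t\to0$ yields $v^TD^2(a^Tg)(x_0)v=0$, because the quadratic form is continuous. For the sequential version I do the same with $x_j$: since $(x_j-x_0)/t_j\to v$, we have $\|x_j-x_0\|=O(t_j)$. Dividing both expansions by $t_j$ and $t_j^2$ respectively gives $Bv=0$ and the second identity.

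The only delicate step is controlling the remainder. The $o(\|x-x_0\|^2)$ term must be $o(t^2)$, which holds because $\|x-x_0\|=O(t)$. In the degenerate case $v=0$ both conclusions are trivial, so no separate treatment is needed. The curve statement is in fact a special case of the sequential statement, so I would prove the sequential version once and deduce the curve version.
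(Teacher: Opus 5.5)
Your proposal is correct and is essentially the paper's argument: the paper also expands the scalar $a^Tg$ to second order at $x_0$, uses $g(x_0)=0$ and $a^TB=0$ to kill the constant and linear terms, substitutes $x(t)-x_0=tv+o(t)$ (respectively $x_j-x_0=t_jv+o(t_j)$), and divides by $t^2$. The only difference is ordering: you prove the sequential version once and specialize to curves, whereas the paper does the curve first and then notes the sequential case. Your explicit Peano-remainder control is a detail the paper leaves implicit.
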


\begin{proof}
Write $x(t)-x_0=tv+o(t)$. First-order Taylor expansion gives $Bv=0$.
The scalar function $a^Tg$ has zero value and derivative at $x_0$.
Its second-order expansion is therefore
\[
0=a^Tg(x(t))
=\tfrac12t^2v^TD^2(a^Tg)(x_0)v+o(t^2).
\]
Division by $t^2$ proves the assertion. The same expansion with
$x_j-x_0=t_jv+o(t_j)$ proves the sequential statement.
\end{proof}

A linearized feasible direction can therefore fail nonlinear compatibility.

\subsection{An explicit curved template}

Divide the unit cube into $2^3$ subcubes and each subcube into the six
Freudenthal tetrahedra. Let $e_i$, $i=1,2,3$, denote the Cartesian
unit vectors. More explicitly, for each permutation
$(i,j,k)$ of $(1,2,3)$ and subcube lower corner $b$, take the vertices
$b$, $b+e_i/2$, $b+(e_i+e_j)/2$, and
$b+(e_1+e_2+e_3)/2$, orienting each cell positively.
Add all edge midpoints to obtain the quadratic nodes.
At each uncurved nodal coordinate $s=(s_1,s_2,s_3)$ prescribe
\begin{equation}
X(s)=s+e_3\prod_{i=1}^3s_i(1-s_i),
\label{eq:dyadic-template}
\end{equation}
and interpolate quadratically within every tetrahedron.
Only the third coordinate is curved. The outer boundary remains fixed.
There are $48$ cells, $125$ nodes, and $81$ free displacement coordinates.

For this fixed template, write $B=B_h=Dg^Q(X)$ and
$C_0=D^2(\sum_e g_e^Q)(X)$. The unit-shear pointwise reference form at
$F=I_3$ is
\[
Q_{I_3}(H,L)=H{:}L+\frac13(\tr H)(\tr L).
\]
For nodal variations $v,w$, put $H_v=J_vJ_{X,e}^{-1}$ and
$H_w=J_wJ_{X,e}^{-1}$ in cell $e$. Define
\begin{equation}
G_{0,e}(v,w)=\Qfour\!\left[
 Q_{I_3}(H_v,H_w)\det J_{X,e}\right],
\qquad G_0=\sum_eG_{0,e}
\label{eq:template-gauge}
\end{equation}
after injection of the free nodal coordinates. For a bulk modulus
$\kappa\ge0$, also set
\[
G_\kappa=G_0+
\kappa B^T\operatorname{diag}\bigl((V_{e0}^Q)^{-1}\bigr)B.
\]
Define \(z\) by its three nonzero nodal values:
\begin{equation}
\begin{array}{c|c}
\text{uncurved node }s & z(s)\\ \hline
(1/2,1/4,1/4)&(0,-2,-527/256)\\
(3/4,1/4,1/4)&(-1,-1,-1)\\
(3/4,1/2,1/4)&(2/3,0,2533/3840).
\end{array}
\label{eq:sparse-volume-witness}
\end{equation}
All remaining nodal values are zero.

\begin{lemma}[Exact template properties]
\label{lem:pressure-template}
The template \eqref{eq:dyadic-template} is one-to-one, with
$\det J_{X,e}\ge119/1024$ on every parent cell.
Its cell volumes and their first derivatives agree for the two rules:
$V_e^Q(X)=V_e^I(X)$ and $Dg_e^Q(X)=Dg_e^I(X)$.
Let $\mathbf1\in\R^{48}$ have every entry equal to one. Moreover,
\begin{equation}
\rank B=47,\quad \mathbf1^TB=0,\quad
Bz=0,\quad z^TC_0z=-c_4/4,\quad
-\tfrac13G_0\preceq C_0\preceq\tfrac13G_0 .
\label{eq:template-facts}
\end{equation}
The gauge energy of $z$ is below $229$.
\end{lemma}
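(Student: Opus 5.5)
The lemma is a finite collection of exact identities for one explicit template, so I would establish it by a certified symbolic computation in rational (and $\mathbb Q(\sqrt5)$) arithmetic. I would split the work into a geometric part, a derivative-agreement part, and the linear-algebra facts in \eqref{eq:template-facts}.

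\emph{Geometry and injectivity.} On each of the $48$ Freudenthal cells the map \eqref{eq:dyadic-template} is the identity in the first two coordinates plus a quadratic interpolant $\phi_e$ of the bubble $\prod s_i(1-s_i)$ in the third. Hence
\[
J_{X,e}=J_{\rm aff,e}+e_3\nabla_\xi\phi_e^T ,
\qquad
\det J_{X,e}=\det J_{\rm aff,e}\,(1+\partial_{X_3}\phi_e).
\]
Since $\det J_{\rm aff,e}=1/48$ in parent units, the bound $\det J_{X,e}\ge119/1024$ on the parent, suitably normalized, reduces to a lower bound on $1+\partial_{X_3}\phi_e$. That function is affine on each cell, so its minimum is attained at a vertex. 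I would tabulate the vertex values exactly and read off the minimum.

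For injectivity I would use the structure of the map. It is $s\mapsto(s_1,s_2,s_3+\phi(s))$ with $\phi$ continuous and piecewise quadratic, so it suffices that $t\mapsto t+\phi(s_1,s_2,t)$ is strictly increasing on every vertical line. This follows from positivity of $1+\partial_3\phi$ on each cell, which the vertex computation already supplies.

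\emph{Agreement of volumes and first derivatives.} On a curved cell, $\det J_{y}$ has total degree at most three, and the $\Qfour$ rule is exact only through degree two. To isolate the defect, I would apply \Cref{thm:cubic-defect} and write $V_e^Q-V_e^I=d_4(J_{X,e})$. Its derivative in direction $v$ is $c_4\sum_a\cof(J_a-\bar J):(V_a-\bar V)$.

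Here only the third row of $J_X$ is nonaffine, so every centered vertex Jacobian has rank at most one. \Cref{prop:invisible-first-order-defect} then gives vanishing of both $d_4(J_{X,e})$ and its first variation. This proves $V_e^Q(X)=V_e^I(X)$ and $Dg_e^Q=Dg_e^I$.

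\emph{Rank and the kernel relations.} From the agreement just proved, $B=Dg^I(X)$. The identity $\mathbf1^TB=0$ then follows by differentiating \eqref{eq:exact-volume-identity}. For the rank, I would adapt the face-flux argument of \Cref{thm:assembled-volume-geometry}: edge-midpoint bubbles on interior faces generate scaled incidence columns of the connected dual graph, which gives rank $47$. Alternatively, I would exhibit an exact nonzero $47$-minor.

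The equations $Bz=0$ and $z^TC_0z=-c_4/4$ I would obtain by direct exact evaluation, since $z$ is supported on the few cells touching the three listed nodes. Using \eqref{eq:aggregate-volume-defect} and \Cref{cor:polarized-defect},
\[
C_0[z,z]=c_4\sum_e\sum_aD^2\det(J_a-\bar J)[Z_a-\bar Z,Z_a-\bar Z].
\]
The constants $527/256$ and $2533/3840$ were presumably chosen to enforce the linear conditions, so this is bookkeeping over a handful of cells.

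\emph{Main obstacle: the form bound $-\tfrac13G_0\preceq C_0\preceq\tfrac13G_0$.} I would prove it cellwise. By \Cref{prop:det-pullback}, on each cell $C_{0,e}$ is a four-point sum of $c_4\,D^2\det(F)[H_a-\bar H,H_a-\bar H]\det J_X$ in physical gradients, where $F$ is the rank-one-centered reference term. The pointwise estimate $|D^2\det(F)[H,H]|\le\lambda\|H\|_F^2$ holds for $F$ near $I_3$, and $\|H\|_F^2\le Q_{I_3}(H,H)$.

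The remaining step is to compare the centered vertex sum $\sum_a\|H_a-\bar H\|^2$ with the $\Qfour$-integrated $Q_{I_3}$ energy. Both are quadratic forms in the affine gradient field, so the ratio is a fixed constant that I would compute from the quadrature weights. The small $c_4\approx1/1000$ should leave ample room for the factor $\tfrac13$.

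The final claim, $G_0(z,z)<229$, is another direct exact evaluation. The difficulty here is care with the curved $J_{X,e}^{-1}$ rather than anything conceptual, so I would fall back on exact symbolic evaluation of each cell Gram matrix if the analytic constant proves loose.
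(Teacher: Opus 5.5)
Your geometry and injectivity argument is fine, and so is the agreement of $V_e^Q,V_e^I$ and their first derivatives via the rank-one centered vertex Jacobians and \Cref{prop:invisible-first-order-defect}. That is a valid alternative to the paper's observation that $\det J_X$ and $\cof J_X$ are affine, so $\Qfour$ integrates the volume and its variation exactly. The identity $\mathbf1^TB=0$ and the direct evaluations of $Bz$, $z^TC_0z$ and $z^TG_0z$ are also fine. One slip: in parent units $\det J_{{\rm aff},e}=6\cdot\frac1{48}=\frac18$, so the determinant claim reads $1+\partial_{X_3}\phi_e\ge119/128$ at the vertices.

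The rank argument, however, cannot be the face-flux one. Quadratic tetrahedra have no face-interior nodes. An edge-midpoint function is supported on every cell around its edge, so its volume-gradient column is not a two-cell incidence column. More decisively, the same $48$-cell connectivity with the uncurved geometry has exact volume-Jacobian rank only $44$: this is the $\alpha=0$ control in the paper's geometric-scope comparison, with three zero-mean pressure modes besides the constant. Any argument that does not use the curving would also give $47$ there, so it must fail. The rank $47$ is activated by the curvature, and only your fallback proves it: exact rational elimination with $47$ pivots, which is what the paper does.

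The main step, $-\frac13G_0\preceq C_0\preceq\frac13G_0$, is not established by your sketch.
\begin{itemize}
\item In $D^2d_4$ the determinant curvature is evaluated at the centered vertex Jacobian $J_a-\bar J$. This matrix is small and of rank one, not near $I_3$, and the constant you need is proportional to its size.
\item The pullback through \Cref{prop:det-pullback} is unnecessary. On a curved cell there is also no single right factor that turns $V_a-\bar V$ into $H_a-\bar H$.
\item The ratio to $G_{0,e}$ is not a fixed constant determined by the quadrature weights; it depends on each cell's geometry.
\end{itemize}

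The paper works in parent coordinates, with $d_e=\min_a\det J_a$, $Q_e=\max_a\|J_a\|_F^2$ and $M_e=\max_a\|J_a-\bar J\|_1$.
\begin{itemize}
\item The estimates $|D^2\det(A)[B,B]|\le2\|A\|_F\|B\|_F^2$ and $\sum_a\|W_a-\bar W\|_F^2\le120\,I_e(v)$ give $|D^2d_4(v,v)|\le240c_4M_eI_e(v)$. Here $I_e(v)=\int_{\That}\|J_v\|_F^2$, which $\Qfour$ integrates exactly.
\item The estimates $\|H\|_F^2\le Q_{I_3}(H,H)$ and $\|J_v\|_F\le\|H\|_F\|J_X\|_F$ give $G_{0,e}(v,v)\ge(d_e/Q_e)I_e(v)$.
\end{itemize}

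Your expectation of ample room from $c_4\approx10^{-3}$ is mistaken. On this template $240c_4\approx0.23$ and $Q_e/d_e$ is of order ten, so it is the curvature factor $M_e$ that brings the ratio down. The certified cellwise maximum $99672223/301989888\approx0.330$ clears $\frac13$ by only about one percent. The inequality therefore rests on this explicit cell-by-cell rational certificate, or on exact per-cell generalized eigenvalue bounds, not on a heuristic constant.
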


\begin{proof}
The first two rows of each Jacobian are constant, so its determinant
and cofactor are affine. Vertex values therefore bound the determinant
throughout the cell. The derivative of the vertical coordinate with
respect to $s_3$ is positive; the first two coordinates and the outer
trace are unchanged. This proves global one-to-one correspondence.
Affine determinant and cofactor fields make the volume and its first
variation polynomials of degree at most one and two, respectively.
Both are integrated exactly by $Q_4$. Equation
\eqref{eq:exact-volume-identity} gives $\mathbf1^TB=0$.

The remaining finite rational calculations, including the stated
Jacobian minimum, are specified in \Cref{app:pressure-certificate}.
Exact row elimination gives $47$ pivots. Substitution of
\eqref{eq:sparse-volume-witness} gives all $48$ volume actions zero
and $z^TC_0z/c_4=-1/4$. Rational element bounds give
$z^TG_0z<229$ and a cellwise bound
$|D^2d_4(v,v)|\le r_*G_{0,e}(v,v)$ with
$r_*=99672223/301989888<1/3$.
Summation proves the last inequality.

The form $G_0$ is positive definite on the clamped space. Indeed,
$G_0(v,v)=0$ and $Q_{I_3}(H,H)\ge\|H\|_F^2$ force the physical gradient
of $v$ to vanish at all four quadrature points. Its parent gradient is
affine, and those points are affinely independent, so the gradient
vanishes throughout each cell. Continuity and the outer clamp then give
$v=0$. Direct differentiation of the unit-shear isochoric density
gives $Q_{I_3}-D^2\det(I_3)$; equivalently this is
\eqref{eq:nh-completion} at $\mu=1$.
The determinant pullback and the exact boundary-volume identity therefore give
\begin{equation}
D^2E_{\rm iso}^Q(X)=G_0-C_0
 \succeq\frac23G_0\succ0.
\label{eq:template-isochoric-hessian}
\end{equation}
\end{proof}

\begin{theorem}[Loss and restoration of volume-constraint regularity]
\label{thm:volume-regularity}
For the template, $z$ belongs to the linearized four-point volume
kernel and lies outside the tangent cone of the feasible set defined
by all $48$ nonlinear four-point volume constraints.
Along $y(t)=X+tz$, the Jacobian $Dg^Q(y(t))$ has rank $48$ for all
sufficiently small nonzero $t$, and its smallest singular value is
$\Theta(|t|)$.
The exact-volume map $g^I$ has constant rank $47$ in a neighborhood
of $X$; its local feasible set is a smooth manifold of dimension $34$.
Every vector in $\ker B$, including $z$, is tangent to a smooth curve
in that exact-volume feasible set.
\end{theorem}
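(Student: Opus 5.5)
Everything follows from \Cref{lem:pressure-template}, \Cref{lem:constraint-compatibility} and the exact total-volume identity \eqref{eq:exact-volume-identity}. We treat the raw and exact maps separately.

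\emph{Raw map, tangent cone.} Take $a=\mathbf1\in\R^{48}$. Then $a^TB=0$ by \eqref{eq:template-facts}, and $Bz=0$ places $z$ in the linearized kernel. The second derivative of $a^Tg^Q$ at $X$ is $C_0$, and $z^TC_0z=-c_4/4\ne0$. \Cref{lem:constraint-compatibility}, in its sequential form, therefore excludes $z$ from the tangent cone of the feasible set.

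\emph{Raw map, rank and singular value.} Follow the proof of \Cref{thm:assembled-volume-geometry}, with the cubic aggregate replaced by a quadratic one. Choose fixed $v_1,\ldots,v_{47}$ whose images under $B$ form a basis of $\mathbf1^\perp$, and set $T=[v_1,\ldots,v_{47},z]$ and $M_t=Dg^Q(y(t))T$.
\begin{itemize}
\item The first $47$ columns of $M_t$ tend to that basis.
\item For the last column, $Bz=0$ gives $Dg^Q(y(t))z=t\,D^2g^Q(X)[z,\cdot\,]z+O(t^2)$. Its component sum is $t\,z^TC_0z+O(t^2)=-tc_4/4+O(t^2)$.
\item Hence $\det M_t$ equals $t$ times a nonzero multiple of $\mathbf1^T(\cdot)$ in the last slot, up to $O(t^2)$, so $|\det M_t|\ge c|t|$. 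Bounded $\|M_t\|_2$ then gives $\sigma_{48}(M_t)\ge c'|t|$, and $\sigma_{48}(Dg^Q)\ge\sigma_{48}(M_t)/\|T\|_2$ gives the same order for the Jacobian.
\item For the upper bound, $\mathbf1^TDg^Q(y(t))=\mathbf1^TB+t\,z^TC_0+O(t^2)=O(t)$, since $\mathbf1^TB=0$.
\end{itemize}
So $\sigma_{48}=\Theta(|t|)$ and the rank is $48$ for small $t\ne0$. One point needs care: $g^Q$ is polynomial, so the Taylor remainders are uniform.

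\emph{Exact map.} By \Cref{lem:pressure-template}, $Dg^I(X)=B$ has rank $47$. A nonzero $47$-minor persists nearby, so the rank is at least $47$. The identity \eqref{eq:exact-volume-identity}, valid throughout the admissible neighborhood, gives $\mathbf1^TDg^I\equiv0$, so the rank is at most $47$. Thus the rank is constant and equal to $47$.

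The constant-rank theorem then finishes the exact case. Because the sum of the rows of $g^I$ vanishes identically, choose $47$ rows whose Jacobians at $X$ are independent. They form a submersion, and the remaining row is minus their sum, so it vanishes automatically. The local feasible set is therefore the zero set of a submersion onto $\R^{47}$. It is a smooth manifold of dimension $81-47=34$ with tangent space $\ker B$ at $X$. Every $v\in\ker B$, including $z$, is the velocity of a smooth curve in that manifold, for example through the inverse-function parametrization $\Phi(tv,0)$ of the regular coordinates.

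The main obstacle is the contrast between the two maps: the raw map needs the exact rational value $z^TC_0z=-c_4/4$, taken from the certificate in \Cref{lem:pressure-template}. Once that value is granted, the rank lift is the same determinant argument as in the tensor case, now at order $|t|$ instead of $\tau^2$.
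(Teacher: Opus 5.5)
Your proof is correct. The tangent-cone part is the same as the paper's: \Cref{lem:constraint-compatibility} with $a=\mathbf1$. The exact-map part is also the same (persistence of a $47$-minor, the total-volume identity, the constant-rank theorem). Your version makes that last step more explicit, as a submersion given by any $47$ rows in which the remaining row vanishes automatically.

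The genuine difference is in the singular-value step. The paper applies constant orthogonal changes of coordinates, taking the last row proportional to $\mathbf1^T$ and the last $34$ columns as a basis of $\ker B$. Bounded row and column eliminations then reduce $Dg^Q(y(t))$ to an invertible $47\times47$ block plus a single row $td+O(t^2)$. Here $d$ is, up to a constant factor, $z^TC_0$ restricted to $\ker B$, and $d\ne0$ because its action on the coordinates of $z$ is proportional to $z^TC_0z$.

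You instead transplant the determinant argument from \Cref{thm:assembled-volume-geometry}. You form the square test matrix $M_t=Dg^Q(y(t))[v_1,\ldots,v_{47},z]$ and use $\sigma_{48}(M_t)\ge|\det M_t|/\|M_t\|_2^{47}$. You transfer the bound to the Jacobian through $\|T\|_2$ and get the upper bound by testing with $\mathbf1$.

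What each buys:
\begin{itemize}
\item Your route needs only the single certified kernel vector $z$. It requires no bounds on elimination operators or their inverses, and it makes the parallel with the tensor case (order $|t|$ instead of $\tau^2$) explicit.
\item The paper's elimination gives a normal form. It shows in one stroke that exactly one singular value degenerates, linearly, while the remaining $47$ stay bounded below, and it reads off both bounds simultaneously from the same reduced matrix.
\end{itemize}
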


\begin{proof}
The failure of feasibility follows from
\Cref{lem:constraint-compatibility} with $a=\mathbf1$ and
\eqref{eq:template-facts}. Positivity and injectivity persist for
sufficiently small $t$.
For the rank assertion, apply constant orthogonal changes of row and
column coordinates to $B$. Choose the last row proportional to
$\mathbf1^T$ and the last $34$ columns as a basis of $\ker B$.
The resulting matrix at $t=0$ is
$\left[\begin{smallmatrix}A&0\\0&0\end{smallmatrix}\right]$
with $A$ an invertible $47$-by-$47$ matrix.
At small $t$, bounded row elimination removes the last row's first
$47$ entries. Its remaining row is $td+O(t^2)$ for a fixed
row vector $d\in\R^{34}$.
This row is nonzero to first order because its action on the
coordinates of $z$ is proportional to $z^TC_0z\ne0$.
Elimination of the upper-right block by a bounded column operation
gives an invertible $47$-by-$47$ block and the row $td+O(t^2)$.
The smallest singular value is therefore bounded above and below
by positive constants times $|t|$.

For $g^I$, \eqref{eq:exact-volume-identity} bounds the rank by $47$
throughout the neighborhood. Its rank at $X$ equals $47$.
Continuity preserves a nonzero $47$-by-$47$ minor, so its rank
is constantly $47$ locally. The constant-rank theorem yields the
$81-47=34$ dimensional manifold and the tangent-curve assertion.
\end{proof}

\begin{corollary}[Effect of removing the mean-pressure equation]
\label{cor:mean-pressure-projection}
Fix a weight vector $a\in\R^{48}$ with $a^T\mathbf1\ne0$.
Let $P=I_{48}-aa^T/(a^Ta)$ be the orthogonal projection onto
the pressure space $a^\perp=\{p\in\R^{48}:a^Tp=0\}$, where $I_{48}$ is
the $48$-by-$48$ identity matrix.
Equal weights give zero coefficient mean; choosing $a_e=V_{e0}^Q$
gives zero reference-volume-weighted pressure mean.
The projected map $Pg^Q$, viewed as a map into that $47$-dimensional
space, has full rank near $X$. Its local feasible manifold admits
a smooth curve with derivative $z$. Along every such curve,
\[
\sum_e g_e^Q(y(t))=-\frac{c_4}{8}t^2+o(t^2).
\]
\end{corollary}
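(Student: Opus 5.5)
Rank of the projected map. The Jacobian of \(Pg^Q\) at \(X\) is \(PB\). Since \(\operatorname{range}B=\mathbf1^\perp\) and \(a^T\mathbf1\ne0\), the restriction of \(P\) to \(\mathbf1^\perp\) is injective: a vector of \(\mathbf1^\perp\) killed by \(P\) is a multiple of \(a\), which lies in \(\mathbf1^\perp\) only if it is zero. Thus \(PB\) maps onto the 47-dimensional space \(a^\perp\). Full rank is open, so it persists near \(X\).

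Feasible curve with derivative \(z\). The constant-rank (submersion) theorem makes \(\{Pg^Q=0\}\) a smooth manifold of dimension \(81-47=34\) near \(X\). Its tangent space at \(X\) is \(\ker PB=\ker B\), by the same injectivity. Since \(Bz=0\) by \eqref{eq:template-facts}, there is a smooth curve \(y(t)\) in this manifold with \(y(0)=X\) and \(y'(0)=z\).

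Second-order expansion of the aggregate. On the curve, \(Pg^Q(y(t))=0\), so \(g^Q(y(t))=s(t)a\) for a scalar \(s(t)=a^Tg^Q(y(t))/(a^Ta)\). It follows that
\[
\sum_e g_e^Q(y(t))=(\mathbf1^Ta)s(t).
\]
Define \(\phi(t)=\mathbf1^Tg^Q(y(t))\). Then \(\phi(0)=0\), and \(\phi'(0)=\mathbf1^TBz=0\). Differentiating twice,
\[
\phi''(0)=z^TC_0z+\mathbf1^TB\,y''(0)=z^TC_0z=-c_4/4,
\]
where the \(y''(0)\) term vanishes because \(\mathbf1^TB=0\) and \(z^TC_0z=-c_4/4\) is given by \eqref{eq:template-facts}. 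Taylor's theorem then gives
\[
\phi(t)=-\tfrac{c_4}{8}t^2+o(t^2).
\]
This is independent of \(a\) and of which curve is chosen. The only care needed is the vanishing of the \(y''\) term; there is no real obstacle.
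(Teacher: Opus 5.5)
Your proposal is correct and follows essentially the same route as the paper. Injectivity of $P$ on $\operatorname{range}B=\mathbf1^\perp$ gives rank $47$ with $\ker PB=\ker B$, the implicit-function (submersion) theorem supplies the feasible curve through $z$, and a second-order Taylor expansion of the aggregate using $\mathbf1^TB=0$ and $z^TC_0z=-c_4/4$ gives the formula. Your explicit remark that $\mathbf1^TB=0$ also removes the $y''(0)$ term is what the paper's phrase ``zero first derivative'' covers, and it makes clear that the expansion holds along every such curve, independently of $a$.
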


\begin{proof}
The range of $B$ is $\mathbf1^\perp$, while $\ker P=\spann\{a\}$.
These spaces intersect only at zero since $a^T\mathbf1\ne0$.
Hence $PB$ has rank $47$ and the same kernel as $B$.
The implicit-function theorem supplies the manifold and the
tangent curve. The aggregate volume has zero first derivative
and second variation $z^TC_0z=-c_4/4$, so its Taylor expansion
gives the displayed formula.
\end{proof}

Mean-pressure projection changes the raw constraint set. The displayed
aggregate is a quadrature volume change; exact total volume stays fixed.

\begin{figure}[tbp]
\centering
\includegraphics[width=\linewidth]{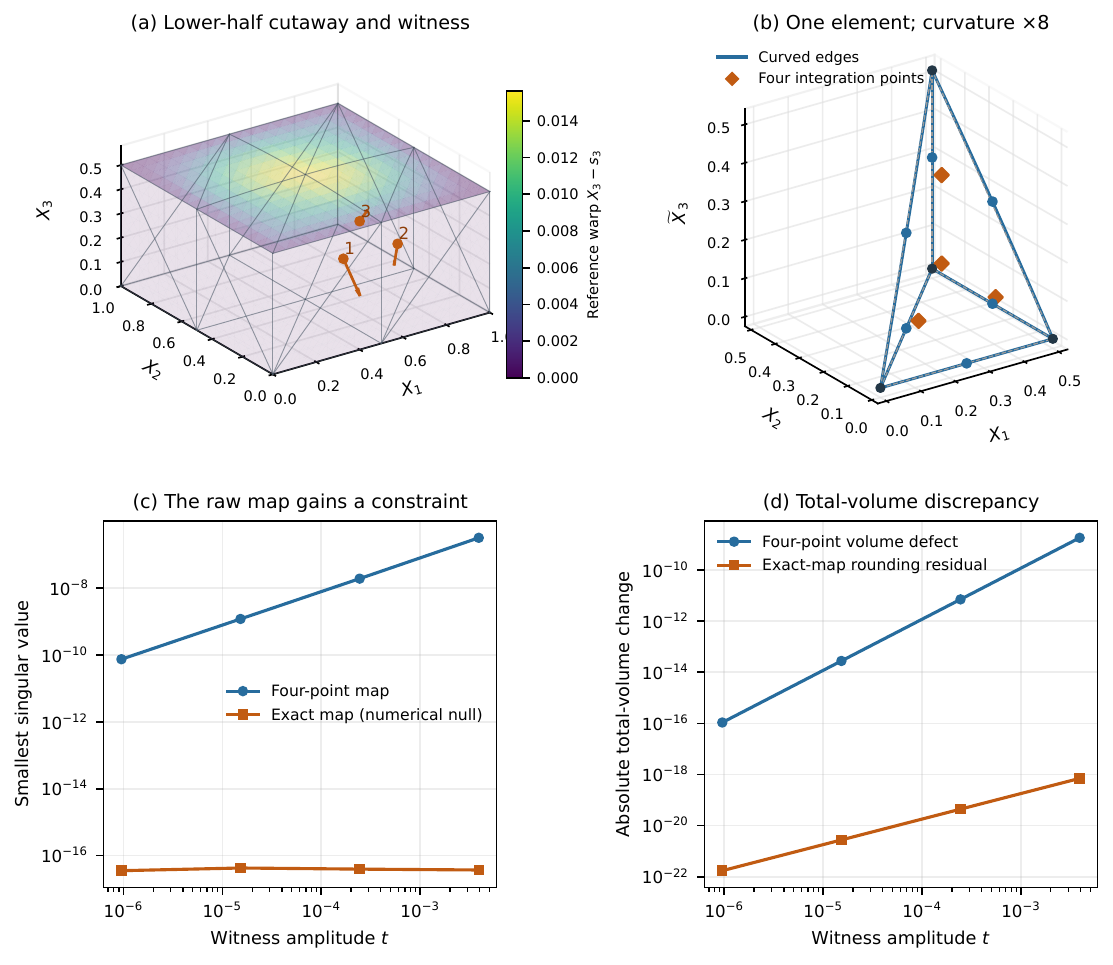}
\caption{Geometry and volume-map response of the template.
(a) A lower-half cutaway at actual geometric scale; orange nodes support
the witness $z$, with arrows scaled by $0.055$.
(b) One element and its four integration points, displayed as
$\widetilde X=X_{\rm aff}+8(X-X_{\rm aff})$, where $X_{\rm aff}$
is the map determined by the corner nodes.
(c,d) Smallest volume-Jacobian singular values and absolute total-volume
changes along the prescribed displacement $u=tz$.
The exact map's numerical null singular values and nonzero total-volume
changes are rounding-scale observations. All quantities are dimensionless;
the path is prescribed and carries no equilibrium assertion.}
\label{fig:pressure-volume-geometry}
\end{figure}
\FloatBarrier

\subsection{Residual-to-displacement error and small-load response}
\label{sec:volume-error-bound}

Full-rank implicit-function arguments give local linear error bounds
\citep{dontchev2014implicit}. The raw volume map violates such a bound.

Let $u\in\R^{81}$ denote the free nodal displacement from $X$.
Choose a sufficiently small closed ball $\mathcal N$ about zero
whose represented deformations are admissible, and define
\[
\mathcal F^r=\{u\in\mathcal N:g^r(X+u)=0\},
\qquad r\in\{Q,I\}.
\]
For a fixed positive definite matrix $M$, set
$\|u\|_M=(u^TMu)^{1/2}$ and define
$\operatorname{dist}_M(u,\mathcal F)=
\inf_{w\in\mathcal F}\|u-w\|_M$.

\begin{theorem}[Failure of a linear volume-residual error bound]
\label{thm:volume-error-bound}
Let $v\ne0$ satisfy $Bv=0$ and $v^TC_0v\ne0$.
For the template, as $t\downarrow0$,
\begin{equation}
\begin{aligned}
\|g^Q(X+tv)\|_2&=\Theta(t^2),\\
\operatorname{dist}_M(tv,\mathcal F^Q)&=\Theta(t),\\
\operatorname{dist}_M(tv,\mathcal F^I)&=O(t^2).
\end{aligned}
\label{eq:projection-orders}
\end{equation}
Consequently no constants $L>0$ and $\theta>1/2$ give the local
bound
$\operatorname{dist}_M(u,\mathcal F^Q)
\le L\|g^Q(X+u)\|_2^\theta$ for all sufficiently small $u$.
The exact-volume map does admit a local linear bound:
\[
\operatorname{dist}_M(u,\mathcal F^I)
\le L_I\|g^I(X+u)\|_2
\]
for some $L_I>0$ and all sufficiently small $u$.
Both the printed direction $z$ and the supplementary dense rational
witness $w$, described with its certificate in
\Cref{app:pressure-certificate}, satisfy the hypotheses.
\end{theorem}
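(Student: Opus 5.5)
The plan has four parts: the residual order, the two raw-distance bounds, the consequence for Hölder exponents, and the exact-volume bounds. Throughout I use the fact that $g^Q$ is smooth near $X$, with $g^Q(X)=0$ and $Dg^Q(X)=B$.

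\emph{Residual order.} Along $tv$ with $Bv=0$, Taylor expansion gives
\[
g^Q(X+tv)=\tfrac{t^2}{2}D^2g^Q(X)[v,v]+O(t^3).
\]
This yields the upper bound $O(t^2)$. For the lower bound, take the aggregate row $\mathbf1^Tg^Q$. Its second variation in direction $v$ is $v^TC_0v\ne0$, so
\[
|\mathbf1^Tg^Q(X+tv)|=\tfrac{t^2}{2}|v^TC_0v|+O(t^3),
\]
and hence $\|g^Q(X+tv)\|_2\ge m^{-1/2}|\mathbf1^Tg^Q|=\Omega(t^2)$.

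\emph{Raw distance.} The upper bound is immediate: $0\in\mathcal F^Q$, so the distance is at most $t\|v\|_M$. The lower bound is the main obstacle, and I would argue by contradiction with \Cref{lem:constraint-compatibility}. Suppose along some $t_j\downarrow0$ there are feasible $w_j\in\mathcal F^Q$ with $\|t_jv-w_j\|_M=o(t_j)$. Then $w_j/t_j\to v$, so $v$ is a sequential tangent vector of the feasible set at $X$. The sequential form of \Cref{lem:constraint-compatibility} with $a=\mathbf1$ then forces $v^TC_0v=0$, which is a contradiction. So the distance is at least $c\,t$ for small $t$.

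The only care needed is that $w_j\to0$, which holds because $\|w_j\|_M\le 2t_j\|v\|_M$ eventually. The same argument is exactly the tangent-cone exclusion in \Cref{thm:volume-regularity}, now for general $v$.

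\emph{Exponent consequence.} Suppose a bound $\operatorname{dist}_M\le L\|g^Q\|^\theta$ held with $\theta>1/2$. Along $u=tv$ it would give $ct\le L C^\theta t^{2\theta}$, which is impossible as $t\to0$.

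\emph{Exact-volume statements.} By \Cref{thm:volume-regularity}, $g^I$ has constant rank $47$ near $X$, and its range is contained in $\mathbf1^\perp$ by \eqref{eq:exact-volume-identity}. Let $\Pi$ be the orthogonal projector onto $\mathbf1^\perp$ and view $\Pi g^I$ as a submersion into that $47$-dimensional space. Near $X$, its zero set equals $\mathcal F^I$, because $\mathbf1^Tg^I\equiv0$.

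For a submersion, the implicit-function/Lyusternik--Graves theorem \citep{dontchev2014implicit} gives metric regularity: $\operatorname{dist}(u,\mathcal F^I)\le L\|\Pi g^I(X+u)\|\le L\|g^I(X+u)\|$ for small $u$. I need the feasible points produced this way to lie in $\mathcal N$; that holds after shrinking the neighborhood of $u$ to which the bound applies. Norm equivalence then converts the Euclidean distance into $\|\cdot\|_M$.

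The $O(t^2)$ distance for $\mathcal F^I$ follows by applying this linear bound together with $\|g^I(X+tv)\|=O(t^2)$. That residual estimate holds because $Dg^I(X)=B$ and $Bv=0$, as recorded in \Cref{lem:pressure-template}.

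\emph{Witnesses.} For $z$, the hypotheses $Bz=0$ and $z^TC_0z=-c_4/4\ne0$ are given in \eqref{eq:template-facts}. For the dense witness $w$, I would cite the rational certificate in \Cref{app:pressure-certificate} for $Bw=0$ and $w^TC_0w\ne0$.
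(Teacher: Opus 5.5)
Your proof is correct and follows essentially the same route as the paper. You use Taylor expansion with the aggregate row for the residual order, the sequential form of \Cref{lem:constraint-compatibility} for the raw lower bound, and an inverse-function (metric-regularity) argument on $47$ independent exact constraints for the linear bound. The only minor difference is that you obtain $\operatorname{dist}_M(tv,\mathcal F^I)=O(t^2)$ from that linear bound together with $\|g^I(X+tv)\|_2=O(t^2)$, whereas the paper reads it off the smooth feasible curve of \Cref{thm:volume-regularity}; both routes are valid.
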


\begin{proof}
The volume maps are cubic polynomials in the nodal coordinates.
Since $Bv=0$, Taylor expansion gives $g^Q(X+tv)=O(t^2)$.
Its aggregate has leading term
$t^2v^TC_0v/2\ne0$, giving the matching lower bound.

Suppose the raw distance has no positive lower bound proportional
to $t$. Then there are $t_j\downarrow0$ and $u_j\in\mathcal F^Q$
with $u_j/t_j\to v$. The sequential conclusion of
\Cref{lem:constraint-compatibility} would imply $v^TC_0v=0$,
a contradiction. The feasible point zero supplies the upper bound
$\operatorname{dist}_M(tv,\mathcal F^Q)\le t\|v\|_M$.
For the exact map, the smooth feasible curve from
\Cref{thm:volume-regularity} has displacement $tv+O(t^2)$,
which proves the third assertion. The claimed raw error bound
would require a quantity of order $t$ to be $O(t^{2\theta})$,
which is impossible for $\theta>1/2$.

For the exact linear bound, any $47$ rows of $B$ are independent:
the sole row relation is their sum.
Complete the first $47$ exact volume coordinates by $34$ linear
coordinates on $\ker B$ to obtain an invertible derivative.
The inverse-function theorem gives a locally Lipschitz inverse
coordinate map. Holding the last $34$ coordinates fixed and
setting the first $47$ to zero changes $u$ by at most a constant
times their residual norm. The final volume constraint also
vanishes by \eqref{eq:exact-volume-identity}.
Equivalence of finite-dimensional norms gives the stated bound.
\end{proof}

The constants concern this fixed template; a full-space square-root
upper bound is not asserted.

\begin{corollary}[A first-order small-load discrepancy]
\label{cor:small-load-discrepancy}
Let $E$ be a $C^3$ displacement energy near zero, with
$E(0)=0$, $DE(0)=0$, and $M=D^2E(0)\succ0$.
Take the dead load $tMv$, where $v$ satisfies the hypotheses of
\Cref{thm:volume-error-bound}.
For sufficiently small positive $t$, the exact-volume constrained
energy $E(u)-t(Mv)^Tu$ has a local minimizer
$u_I(t)=tv+O(t^2)$.
The raw constrained energy also has a minimizer in a sufficiently
small closed neighborhood, with displacement $O(t)$.
Every raw-volume feasible displacement, including any nearby
minimizer of the same loaded energy, differs from $u_I(t)$ in $M$-norm by
at least $ct$ for some $c>0$ independent of $t$.
No prescribed uniform pressure bias is required.
\end{corollary}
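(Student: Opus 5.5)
The argument has three parts: the exact-volume branch, existence of a raw minimizer, and the uniform separation. All three rest on \Cref{thm:volume-regularity} and \Cref{thm:volume-error-bound}.

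\emph{Exact-volume branch.} By \Cref{thm:volume-regularity}, $g^I$ has constant rank $47$ near $X$. Its feasible set is therefore a smooth $34$-dimensional manifold $\mathcal M^I$ with tangent space $\ker B$ at zero. I would parametrize it by the regular-coordinate map from the proof of \Cref{thm:volume-error-bound}: $\Phi^I(\eta)=\eta+\psi^I(\eta)$, where $\eta\in\ker B$ and $\psi^I(\eta)=O(\|\eta\|^2)$ lies in $(\ker B)^\perp$. Restricted to the manifold, the loaded energy is
\[
f_t(\eta)=E(\Phi^I(\eta))-t(Mv)^T\Phi^I(\eta).
\]
Its Hessian at zero is $M|_{\ker B}$ plus a term $O(t)$; no second-order constraint correction enters because $DE(0)=0$. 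The gradient equation is
\[
P_{\ker B}M\eta-tP_{\ker B}Mv+O(\|\eta\|^2+t\|\eta\|)=0.
\]
Since $v\in\ker B$, the implicit function theorem applied to $\eta=t\zeta$ gives a unique nearby critical point with $\zeta=v+O(t)$; the $C^3$ hypothesis supplies the $O(t)$ remainder. Positive definiteness of $M$ on $\ker B$ makes this critical point a strict local minimizer on $\mathcal M^I$. Hence $u_I(t)=\Phi^I(tv+O(t^2))=tv+O(t^2)$.

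\emph{Raw minimizer.} Let $\mathcal N$ be a small closed ball on which $E(u)\ge c_0\|u\|^2$. The set $\mathcal F^Q\cap\mathcal N$ is compact and contains zero, so a minimizer exists. Comparing its energy with that of zero gives
\[
c_0\|u_Q\|^2\le t\|Mv\|\,\|u_Q\|,
\]
hence $u_Q=O(t)$, and the minimizer is interior for small $t$.

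\emph{Separation.} This is the main step. By \Cref{thm:volume-error-bound} there is $c_1>0$ with $\operatorname{dist}_M(tv,\mathcal F^Q)\ge c_1t$ for small $t$. For any raw-feasible $w$, the triangle inequality gives
\[
\|w-u_I(t)\|_M\ge\|w-tv\|_M-\|u_I(t)-tv\|_M\ge c_1t-Ct^2\ge\tfrac{c_1}{2}t
\]
for small $t$, so $c=c_1/2$.

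The obstacle is that \Cref{thm:volume-error-bound} measures distance to $\mathcal F^Q$, which is restricted to the ball $\mathcal N$, whereas the corollary claims separation from \emph{every} raw-feasible displacement. Feasible $w$ outside $\mathcal N$ have $\|w\|_M\ge r_{\mathcal N}$, a fixed positive radius. Since $\|u_I(t)\|_M=O(t)$, such $w$ lie at distance at least $r_{\mathcal N}/2\ge ct$ from $u_I(t)$ for small $t$. The bound therefore holds globally, and in particular it applies to any nearby raw minimizer.
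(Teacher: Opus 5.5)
Your proposal is correct and follows the paper's proof essentially step for step. It uses an implicit-function branch for the exact-volume problem, a coercivity-and-compactness argument for an interior raw minimizer of size $O(t)$, and separation from the $\Theta(t)$ distance bound of \Cref{thm:volume-error-bound} combined with the $O(t^2)$ offset of $u_I(t)$ from $tv$. The differences are minor: you obtain the exact branch in reduced coordinates on the constant-rank feasible manifold rather than through the paper's bordered stationarity system for $47$ independent exact constraints, and you explicitly dispose of raw-feasible points outside $\mathcal N$, which the paper leaves implicit.
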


\begin{proof}
Use $47$ independent exact constraints, and write $B_{47}$ for
the first $47$ rows of $B$. At zero load, the
linearization of their stationarity system is
\[
\begin{bmatrix}M&B_{47}^T\\B_{47}&0\end{bmatrix}.
\]
For a displacement unknown $a$ and a pressure multiplier, this
matrix's homogeneous system gives $B_{47}a=0$ and then
$a^TMa=0$, so $a=0$; full row rank gives
the zero multiplier. Thus the block matrix is invertible.
The implicit-function theorem yields a
smooth stationary branch. Its derivative at zero solves the
same system with right-hand side $(Mv,0)$ and is $(v,0)$.
Positive definiteness on the constraint tangent persists,
giving a strict local minimizer.
After shrinking $\mathcal N$, Taylor expansion gives
$E(u)\ge c_E\|u\|_2^2$ for some $c_E>0$.
For small $t$, the loaded energy is positive on its boundary,
while zero is raw feasible with energy zero.
Compactness supplies an interior raw minimizer $u_Q$.
Its energy is at most zero, so
$c_E\|u_Q\|_2^2\le t\|Mv\|_2\|u_Q\|_2$ and $u_Q=O(t)$.
Finally, distance to a closed set is Lipschitz in the chosen
norm, so \eqref{eq:projection-orders} gives
$\operatorname{dist}_M(u_I(t),\mathcal F^Q)
\ge ct-O(t^2)$.
Shrinking the load interval completes the assertion.
\end{proof}

The four-point neo-Hookean energy satisfies the corollary by
\eqref{eq:template-isochoric-hessian}. Its constraints preserve cell
averages rather than imposing \(J=1\) pointwise. The finite-penalty
response follows from \cref{thm:finite-penalty-limit}, with quadratic
reduced compatibility and a quartic limiting energy.

\subsection{The quartic penalty consequence}
\label{sec:tet10-quartic-specialization}

Use the hypotheses and notation of \cref{thm:finite-penalty-limit}:
\(B_q=Dq(0)\), \(K=\ker B_q\), \(M=D^2E(0)\succ0\), fixed positive
residual weight \(\mathsf V\), and regular-coordinate inverse
\(\Phi(v,0)=v+\psi(v,0)\).
For the one-relation, second-order case, suppose \(q,E\) are
\(C^3\), \(\rank B_q=m-1\), and choose
$a\in\ker B_q^T\setminus\{0\}$. Define
\[
 d_a=a^T\mathsf V a>0,\qquad C_q=D^2(a^Tq)(0).
\]
Since $a^TB_q=0$ and $\psi(v,0)=O(\|v\|_2^2)$, Taylor expansion gives
\[
 a^Tq(\Phi(v,0))=\tfrac12v^TC_qv+O(\|v\|_2^3).
\]
With $S=a$, the reduced polynomial is $P_2(v)=v^TC_qv/2$ and
$W_S=d_a$. Thus the energy
\begin{equation}
 J_{\varepsilon,\beta}(u)
 =E(u)-\varepsilon\ell^Tu
       +\frac{\beta}{2\varepsilon^2}q(u)^T\mathsf V^{-1}q(u)
 \label{eq:finite-penalty-energy}
\end{equation}
has the localized limit
\begin{equation}
 J_\beta^0(v)
 =\frac12v^TMv-\ell^Tv
       +\frac{\beta}{8d_a}(v^TC_qv)^2,
 \qquad v\in K.
 \label{eq:quartic-volume-limit}
\end{equation}
The displacement and residual orders are $O(\varepsilon)$ and
$O(\varepsilon^2)$, with the minimum-value and cluster-point conclusions
of \cref{thm:finite-penalty-limit}. These statements also hold when
$C_q$ vanishes on $K$, in which case the displayed quartic term is zero.

For the Tet10 template of \cref{lem:pressure-template}, take
$q=q_h^Q$, $a=\mathbf1$, and
$\mathsf V=\operatorname{diag}(V_{e0})$, where $V_{e0}$ are its common
raw and exact reference volumes. Then $C_q=C_0$, the template's aggregate
second-variation matrix. The exact-volume map has the same resting
Jacobian and identically zero aggregate volume change. Any
$v\in K$ with $v^TC_0v\ne0$, including the certified sparse witness,
therefore gives a first-order normalized separation under the joint
scaling when $\ell=Mv$. The radial derivative of the raw quartic energy
at $v$ is $\beta(v^TC_0v)^2/(2d_a)>0$.

\begin{corollary}[Normalized Tet10 finite-bulk response]
\label{cor:finite-penalty-mode}
On $K$, let $v_*$ have unit $M$-norm and be a generalized eigenvector
of $(C_q/\sqrt{d_a},M)$ for a nonzero eigenvalue $\lambda$ of maximum
absolute value. The restricted eigenvector equation means
$w^TC_qv_*=\lambda\sqrt{d_a}\,w^TMv_*$ for every $w\in K$.
Set $\ell=Mv_*$ and $\beta=2c/\lambda^2$, where
$0<c\le1$. The quartic energy \eqref{eq:quartic-volume-limit} has
the unique global minimum $t_cv_*$, where
\[
 t_c+ct_c^3=1,\qquad 0<t_c<1.
\]
For an exact-volume comparison with the same $B_q$ and $M$ and an
identically zero reduced relation, the limiting minimum is $v_*$ and
the limiting relative $M$-distance is $1-t_c$.
\end{corollary}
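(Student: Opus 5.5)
The plan is to reduce the statement to \cref{cor:general-penalty-mode} with $k=2$, or, if that is not clean, to repeat its argument directly for the quadratic case. In the quartic specialization, $S=a$ and $W_S=d_a$, and $P_2(v)=v^TC_qv/2$. Therefore $\|P_2(v)\|_{W_S^{-1}}=|v^TC_qv|/(2\sqrt{d_a})$. On $K$, the symmetric form $C_q/\sqrt{d_a}$ restricted to $K$ with respect to $M$ has generalized eigenvalues. The maximum over the $M$-unit sphere of $|v^TC_qv|/\sqrt{d_a}$ equals the largest absolute eigenvalue $|\lambda|$, and it is attained at $v_*$; this is the Rayleigh quotient characterization for a pair with $M\succ0$. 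Hence $p_*=|\lambda|/2$, which is positive because $\lambda\neq0$.

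Next I check the parameters against \cref{cor:general-penalty-mode}. For $k=2$ it requires $0<c\le 1/(k-1)=1$ and $\beta=c/(2p_*^2)=c/(2\lambda^2/4)=2c/\lambda^2$. These are exactly the hypotheses stated. The root equation $t_c+c\,t_c^{2k-1}=1$ becomes $t_c+ct_c^3=1$. The load is $\ell=Mv_*$ in both statements. The limiting energy of \cref{cor:general-penalty-mode}, namely \eqref{eq:general-penalty-limit}, reads $\tfrac12v^TMv-\ell^Tv+\tfrac\beta2(v^TC_qv)^2/(4d_a)$, which coincides with \eqref{eq:quartic-volume-limit}. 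The uniqueness of the global minimum $t_cv_*$ and the comparison statement therefore transfer verbatim. For an exact-volume map with vanishing reduced relation, the limit energy is quadratic with minimizer $v_*$, so the $M$-distance is $\|v_*-t_cv_*\|_M=1-t_c$.

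The step that needs care is the identification of $v_*$ with a maximizer of $\|P_2\|_{W_S^{-1}}$, in the form used by \cref{cor:general-penalty-mode}. That corollary uses the stationarity identity $\langle DP_k(v_*)[w],P_k(v_*)\rangle=kp_*^2\langle v_*,w\rangle_M$, which holds at a maximizer. I would verify it directly here, since this also covers a sign choice. First, $DP_2(v_*)[w]=w^TC_qv_*=\lambda\sqrt{d_a}\,w^TMv_*$. Second, $P_2(v_*)=\lambda\sqrt{d_a}/2$. Combining these, the pairing with weight $1/d_a$ gives $\lambda^2\langle v_*,w\rangle_M/2=2p_*^2\langle v_*,w\rangle_M$, as required.

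The Banach-norm bounds used in the convexity step reduce here to elementary eigenvalue bounds: $|w^TC_qw|\le|\lambda|\sqrt{d_a}\|w\|_M^2$ on $K$. This gives $D^2J^0_\beta(v)[w,w]\ge(1-c\|v\|_M^2)\|w\|_M^2$, so the remaining steps of that proof, namely strict convexity in the unit ball and radial exclusion of larger norms, apply unchanged.
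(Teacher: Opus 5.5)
Your reduction is correct and is exactly the paper's opening step. The Rayleigh characterization on $K$ gives $p_*=|\lambda|/2$, so $\beta=c/(2p_*^2)=2c/\lambda^2$ and $t_c+ct_c^3=1$ are the $k=2$ case of \cref{cor:general-penalty-mode}; your direct checks of the stationarity identity and of the bound $D^2J_\beta^0(v)[w,w]\ge(1-c\|v\|_M^2)\|w\|_M^2$ are also right. The paper then adds a self-contained quartic certificate, the exact identity $J_\beta^0(v)-J_\beta^0(t_cv_*)=\tfrac12(v-t_cv_*)^T(M+s_0C_q)(v-t_cv_*)+\tfrac{\beta}{8d_a}\bigl(v^TC_qv-t_c^2v_*^TC_qv_*\bigr)^2$ with $s_0=\beta t_c^2v_*^TC_qv_*/(2d_a)$ and $M+s_0C_q\succeq(1-ct_c^2)M$ on $K$, which gives the unique global minimum (with quadratic growth) without the ball-convexity and radial-exclusion steps.
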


\begin{proof}
The maximum quadratic Rayleigh magnitude gives $p_*=|\lambda|/2$,
so the normalization agrees with \cref{cor:general-penalty-mode} at
$k=2$. The quartic case also has a direct global certificate.
Put $v_0=t_cv_*$ and
$s_0=\beta(v_0^TC_qv_0)/(2d_a)$. The restricted eigenvector equation
and the scalar equation for $t_c$ give stationarity at $v_0$ on $K$.
For every $v\in K$, direct expansion yields
\begin{align*}
 J_\beta^0(v)-J_\beta^0(v_0)
 &=\tfrac12(v-v_0)^T(M+s_0C_q)(v-v_0)\\
 &\quad+\frac{\beta}{8d_a}
              (v^TC_qv-v_0^TC_qv_0)^2.
\end{align*}
Extremality of $|\lambda|$ gives
$M+s_0C_q\succeq(1-ct_c^2)M$ on $K$. Since $c\le1$ and $t_c<1$,
this bound is strictly positive. The expansion proves the unique global
minimum. The exact-volume limit is quadratic with minimum $v_*$;
unit $M$-norm gives the stated distance.
\end{proof}

At each fixed finite $\kappa$, \eqref{eq:fixed-bulk-response-agreement}
instead gives $\varepsilon v_*+O(\varepsilon^2)$ for both volume maps
under the present $C^3$ hypotheses. The observations in
\cref{sec:finite-penalty-evidence} test both scalings with a positive
quadratic driving energy; the tensor examples use the nonlinear
isochoric energy.

\subsection{Numerical illustrations of the second-order consequences}
\label{sec:tetrahedral-observations}

The \(81\)-coordinate template supplies separate quadratic-energy
illustrations of the second-order consequences.
Using the pressure-exact resting Hessian at unit shear and bulk \(10\)
as a fixed metric \(M\), the hard-volume projection experiment minimizes
\(\|u-tw\|_M^2/2\) for the dense rational witness \(w\).
Its raw normalized distance approaches the cone reference \(0.6121334\);
the exact and mean-projected distances decrease with \(t\).
These are nonlinear constraints with a quadratic driving energy.

The corresponding finite-penalty experiment uses an extreme
\(M\)-normalized mode of \cref{cor:finite-penalty-mode}.
The two prescribed singular scalings approach normalized offsets
\(0.15229240\) and \(0.31767220\); a fixed-bulk control approaches the
common leading response. These observations, their complete parameter
tables and the independent precision checks appear in
\cref{sec:projection-evidence,sec:finite-penalty-evidence}.

The supplementary controls in \cref{app:pressure-controls,app:numerical-details}
also cover geometric variation, the
pressure-bias negative modes, variance stabilization and loaded
physical operators. They retain the complete pressure terms and
distinguish exact sign certificates from numerical eigenvalue counts.

\subsection{Exact-volume restoration}
\label{sec:pressure-tangent}

Exact integration restores the boundary-volume identity and the stated
local constraint regularity. For Tet10 it requires only the four
complete parent Jacobians already evaluated at the material points.

\begin{proposition}[Four-Jacobian volume identity]
\label{prop:four-jacobian-volume}
Let $J_{y,q}$ be the four parent-coordinate Jacobian matrices of a quadratic cell at the
points \eqref{eq:q4-points}, and let
$\bar J_y=\frac14\sum_{q=1}^4J_{y,q}$. Then
\begin{equation}
\int_{\That}\det J_y\,d\xi
=\frac1{24}\sum_{q=1}^4\det J_{y,q}
-\frac{3-\sqrt5}{72}\sum_{q=1}^4\det(J_{y,q}-\bar J_y).
\label{eq:four-jacobian-volume}
\end{equation}
Differentiation of this scalar identity gives the exact volume
gradient and Hessian with respect to every cell nodal coordinate.
\end{proposition}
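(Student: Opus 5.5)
The identity follows from \Cref{thm:cubic-defect} once one notes that the parent Jacobian of a quadratic cell is affine. Since \(y_h\) has total degree two, \(J_y=\nabla_\xi y_h\) is an affine matrix field on \(\That\). It is therefore determined by its values at any four affinely independent points, and the four points \eqref{eq:q4-points} are such points.

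\textbf{Step 1: barycentric form at the quadrature points.} Write \(J_y(\lambda)=\sum_a\lambda_aA_a\) with vertex values \(A_a\). Each quadrature point has barycentric coordinates \(\beta_4\mathbf1+(\alpha_4-\beta_4)e_q\), so
\[
J_{y,q}=\beta_4\sum_aA_a+(\alpha_4-\beta_4)A_q=4\beta_4\bar A+(\alpha_4-\beta_4)A_q .
\]
Averaging over \(q\) and using \(\alpha_4+3\beta_4=1\) gives \(\bar J_y=\bar A\). Hence
\[
J_{y,q}-\bar J_y=(\alpha_4-\beta_4)(A_q-\bar A),\qquad \alpha_4-\beta_4=\frac{\sqrt5}{5}.
\]
By cubic homogeneity of the determinant,
\[
\sum_q\det(J_{y,q}-\bar J_y)=\Bigl(\frac{\sqrt5}{5}\Bigr)^3\sum_a\det(A_a-\bar A)=\frac{\sqrt5}{25}\sum_a\det(A_a-\bar A).
\]

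\textbf{Step 2: substitute into the defect formula.} \Cref{thm:cubic-defect} gives
\[
\int_{\That}\det J_y=\Qfour[\det J_y]-c_4\sum_a\det(A_a-\bar A),
\]
and \(\Qfour[\det J_y]=\frac1{24}\sum_q\det J_{y,q}\). Step 1 rewrites the correction term as
\[
c_4\sum_a\det(A_a-\bar A)=\frac{25\,c_4}{\sqrt5}\sum_q\det(J_{y,q}-\bar J_y)=5\sqrt5\,c_4\sum_q\det(J_{y,q}-\bar J_y).
\]
The coefficient is then checked directly:
\[
5\sqrt5\,c_4=\frac{5\sqrt5(3\sqrt5-5)}{1800}=\frac{75-25\sqrt5}{1800}=\frac{3-\sqrt5}{72},
\]
which matches \eqref{eq:four-jacobian-volume}.

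\textbf{Step 3: derivatives.} The identity holds for every nodal configuration of the cell. Both sides are polynomials in the nodal coordinates, since each \(J_{y,q}\) and \(\bar J_y\) depends linearly on them. Differentiating once and twice in the nodal coordinates therefore yields the exact volume gradient and Hessian, as in \Cref{cor:polarized-defect}.

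The only delicate point is Step 1. It relies on the quadrature points being a homothetic image of the vertices about the centroid, and the ratio \(\alpha_4-\beta_4\) must be computed correctly. The cube of that ratio is exactly what converts \(c_4\) into \((3-\sqrt5)/72\).
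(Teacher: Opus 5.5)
Your proposal is correct and follows essentially the same route as the paper. You use that the Jacobian is affine, so the Gauss-point deviations from the mean are the vertex deviations scaled by $\alpha_4-\beta_4=1/\sqrt5$; you cube this to get the factor $5\sqrt5$, substitute into the exact defect formula with $5\sqrt5\,c_4=(3-\sqrt5)/72$, and differentiate the polynomial identity in the nodal coordinates. Your explicit barycentric computation, including the check $\alpha_4+3\beta_4=1$, simply spells out the scaling step that the paper states in one line.
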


\begin{proof}
The Jacobian is affine. Its Gauss-point deviations from the mean
are the corresponding vertex deviations divided by $\sqrt5$.
Their determinants differ by the factor $5\sqrt5$.
Substitute this relation into \Cref{thm:cubic-defect} and use
$5\sqrt5\,c_4=(3-\sqrt5)/72$.
Both sides are polynomial functions of the nodal coordinates,
so their derivatives agree identically.
\end{proof}

This identity supplies exact integration for this determinant
functional; general exact tetrahedral cubature is established.
Consistent pressure condensation uses the corrected scalar volume
and its derivatives together. The resulting pressure-exact equation
differs from the raw four-point equation.
An exactly compensated reference construction instead preserves the
raw equation and retains its complete tangent \citep{liu2026twominor}.

The supporting pressure analysis in \cref{app:pressure-controls}
derives the complete pressure Hessian and examines bias and stabilization.
\Cref{tab:pressure-routes} records the distinction between changing
the volume equations and retaining them in a compensated reference form.

\section{Discussion and conclusions}
\label{sec:discussion}
\label{sec:conclusions}

The contribution is the explicit connection between an integration
choice, nonlinear constraint geometry and mechanical response.
The tensor construction proves a sharp supported-defect criterion;
the full-space correction and assembled rank argument quantify its
geometric consequences; the reduced compatibility term determines
the additional leading penalty energy. The complete \(\mathbb Q_3\)
normal form resolves the local exponents, and Tet10 supplies the
second-order counterpart with exact four-Jacobian restoration.

The feasible-distance estimate concerns the prescribed quadrature
equations. Its cell-interior correction preserves exact cell volumes,
including existing volume errors, and its norm is an upper bound
rather than a closest distance. Coefficient and physical-norm control
give mesh-uniform upper constants at fixed order. Sharpness is proved
on each fixed tensor mesh, while admissibility uses separate
maximum-strain neighborhoods. For the Tet10 template, the specified
directions exclude exponents above \(1/2\); a full-space square-root
upper bound remains unproved.

The mechanical theorem concerns localized minima in a joint
small-load, large-bulk limit. Fixed finite bulk gives the common
leading response. The numerical examples use explicit witness loads
and stationary references, distinct from the extremal load in
\cref{cor:general-penalty-mode}. Their larger normalized separations
occur at extreme bulk/shear ratios while absolute displacements
shrink. Pressure recovery amplifies volume error; the mixed residual
and its conditioned displacement effect delimit the interpretation.
In particular, the nearly flat \(\mathbb Q_4\) branch resolves the broad
raw/exact gap better than its fine finite-to-limit correction.
Finite-parameter minimality is not certified by these observations.

Exact-volume restoration, mean-pressure projection, variance
stabilization and compensated reference forms change different parts
of the problem. The first restores the volume identity and the stated
local constant rank; the second removes a raw equation; the third
changes the energy with the raw constraint map retained.
MORTIS's compensated reference forms preserve the original equation
\citep{liu2026twominor}; \cref{tab:pressure-routes} makes this distinction
explicit. The comparison with \citet{foulk2021extending} concerns
their scalar ratio/variance mechanism on the stated Tet10 kinematics,
leaving their composite fields and operators outside the comparison.

The complete pressure tangent retains both its positive gradient
outer term and signed geometric term. The supporting large-bias,
cell-scale-curved construction has negative directions that no
increase of the cell-average bulk penalty removes. Loaded-state
controls have different operators and boundary conditions and
preclude an unconditional positive-tangent claim for exact volume.
These restrictions concern tangent-based remedies, not a change
to the volume constraints used in the main analysis.

The results assume fixed spaces and cubature with the stated support,
coefficient and geometry properties. Arbitrary simplex orders,
rational geometry, mixed-element correction and changing quadrature
remain separate questions. Uniform mixed inf-sup stability and a
simultaneous mesh/load lower limit are also open: the tetrahedral
catalogue has rank \(44\) in its affine control and \(47\) in the
curved cases. Within the proved scope, the constructions distinguish
quadrature feasibility, exact cell-volume preservation and singular
mechanical response without relying on reference-state derivative
agreement alone.

\appendix
\section{Complete pressure tangents and supporting stability analyses}
\label{app:pressure-controls}

This appendix gives the supporting local quotient construction,
complete pressure Hessian and Tet10 stability comparisons.
The pressure-bias and volume-variance results concern the stated
discretizations; exact-volume restoration and compensated reference
forms retain their distinct roles.

\subsection{Affine directions and second-jet coordinates}
\label{sec:tet10-quotient}

The local defect of \cref{sec:cubature} acts only on the quadratic
part of a variation. Here \(J_y\) is the parent gradient of the current
quadratic cell map. For a variation \(v_h\), write
\(V_a=D_\xi v_h(a)\) for its four vertex gradients and
\(\bar V=\frac14\sum_aV_a\); use the same convention for \(w_h\).
The polarized formula \eqref{eq:polarized-defect} gives the following
annihilation and quotient construction.

\begin{corollary}[Affine annihilation]
\label{cor:p1-annihilation}
For \(v_h\in[\mathbb P_1(\That)]^3\) and \(w_h\in[\mathbb P_2(\That)]^3\),
\begin{equation}
D^2d_4(J_y)[v_h,w_h]=0.
\label{eq:p1-annihilation}
\end{equation}
The arguments may be exchanged.
\end{corollary}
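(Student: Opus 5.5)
The plan is to use the polarized formula \eqref{eq:polarized-defect} from \cref{cor:polarized-defect}. It expresses $D^2d_4(J_y)[v_h,w_h]$ as $c_4\sum_a D^2\det(J_a-\bar J)[V_a-\bar V,W_a-\bar W]$. The key observation is that this expression depends on $v_h$ only through its \emph{centered} vertex gradients $V_a-\bar V$, $a=0,\dots,3$.

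For $v_h\in[\mathbb P_1(\That)]^3$, the parent gradient $D_\xi v_h$ is a constant matrix. Its four vertex values therefore coincide: $V_a=\bar V$ for every $a$, so every centered increment $V_a-\bar V$ vanishes. The polarized determinant Hessian $D^2\det(M)[H,L]=M:(H\times L)$ is bilinear in $(H,L)$, so each summand vanishes whenever its first increment argument is zero. No property of $J_a-\bar J$ or of $w_h$ is needed; in particular, by the remark in \cref{cor:polarized-defect}, the conclusion also holds at singular centered matrices.

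For the exchange of arguments, I will note that $D^2d_4(J_y)$ is the second derivative of the polynomial map $y_h\mapsto d_4(\nabla_\xi y_h)$, so it is symmetric. Equivalently, the symmetry of $H\times L$ in \eqref{eq:polarized-defect} gives the same conclusion when $w_h\in[\mathbb P_1]^3$ and $v_h\in[\mathbb P_2]^3$.

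I expect no real obstacle. The one point to state carefully is that \eqref{eq:polarized-defect} holds for all quadratic variations, including affine ones, since $[\mathbb P_1]^3\subset[\mathbb P_2]^3$. It also holds at every current quadratic map $y_h$, not only at rest. This is immediate because \cref{thm:cubic-defect} is an identity for every affine matrix field and the derivatives are taken of a polynomial.
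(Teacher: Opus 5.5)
Your proposal is correct and follows essentially the same route as the paper. Like the paper, you observe that an affine variation has constant parent gradient, so $V_a=\bar V$ and every term of \eqref{eq:polarized-defect} vanishes by bilinearity. The exchange of arguments then follows from the symmetry of the form.
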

\begin{proof}
An affine variation has \(V_a=\bar V\). Every term in
\eqref{eq:polarized-defect} vanishes, and the form is symmetric.
\end{proof}

For vertex and edge-midpoint values \(v_i,v_{ij}\), define
\begin{equation}
\delta_{ij}(v_h)=v_{ij}-\tfrac12(v_i+v_j),\qquad
\delta(v_h)\in\R^{18}.
\label{eq:midside-deviation}
\end{equation}
These six vector deviations determine the quadratic part modulo affine
fields, equivalently its constant vector Hessian. They are the
\emph{second-jet coordinates} used below.

\begin{theorem}[Tet10 second-jet factorization]
\label{thm:quotient-factor}
The deviations induce the isomorphism
\begin{equation}
[\Ptwo(\That)]^3/[\Pone(\That)]^3\cong\R^{18}.
\label{eq:quotient-isomorphism}
\end{equation}
On cell \(e\), a unique symmetric form \(R_e(J_y)\) satisfies
\begin{equation}
D^2d_4(J_y)[v_h,w_h]
=\delta(v_h)^TR_e(J_y)\delta(w_h).
\label{eq:quotient-factorization}
\end{equation}
In vertex/midside-deviation coordinates its matrix is
\begin{equation}
\begin{bmatrix}0&0\\0&R_e\end{bmatrix}.
\label{eq:edge-only-block}
\end{equation}
\end{theorem}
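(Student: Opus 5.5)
The plan has two parts: first the linear-algebra isomorphism \eqref{eq:quotient-isomorphism}, then a factorization of the polarized defect through it, obtained from \Cref{cor:p1-annihilation} and \eqref{eq:polarized-defect}.

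\textbf{The quotient.} The map $v_h\mapsto\delta(v_h)\in\R^{18}$ is linear on $[\Ptwo(\That)]^3$. Every affine field interpolates linearly along edges, so each midpoint value is the average of the two vertex values, and $[\Pone]^3\subseteq\ker\delta$. The map is surjective: the quadratic with vertex values zero and edge values $\delta_{ij}$ is $\sum_{i<j}4\lambda_i\lambda_j\,\delta_{ij}$, and it realizes any prescribed deviation vector. A dimension count then settles the rest. Since $\dim[\Ptwo]^3=30$ and $\dim[\Pone]^3=12$, the kernel has dimension exactly $12$. It therefore equals $[\Pone]^3$, and \eqref{eq:quotient-isomorphism} follows. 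The same computation shows that the vertex values and the deviations $\delta$ form a linear coordinate system on $[\Ptwo]^3$, with the affine part fixed by the vertex values.

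\textbf{The factorization.} Fix $J_y$ and regard $D^2d_4(J_y)[\cdot,\cdot]$ as a symmetric bilinear form on $[\Ptwo]^3$. By \Cref{cor:p1-annihilation} it vanishes whenever either argument lies in $[\Pone]^3$, so it descends to a well-defined symmetric bilinear form on the quotient. Transporting this form through the isomorphism gives a unique symmetric $18\times18$ matrix $R_e(J_y)$ satisfying \eqref{eq:quotient-factorization}. Uniqueness holds because $\delta$ is surjective.

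For an explicit formula, I note that $V_a-\bar V$ depends only on $\delta(v_h)$. Indeed, the parent gradient of $\sum 4\lambda_i\lambda_j\delta_{ij}$ is affine, and its vertex values are linear in $\delta$; the mean subtraction then removes any affine contribution. Substituting this into \eqref{eq:polarized-defect} gives $R_e$ as $c_4\sum_a$ of the $D^2\det(J_a-\bar J)$ forms, pulled back by these linear maps. This also shows that $R_e$ is polynomial in $J_y$.

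\textbf{The block form.} In the vertex/deviation coordinates, write $v_h=$ (affine part determined by vertex values) $+$ (quadratic bubble part determined by $\delta$). By bilinearity and annihilation, every pairing that involves the vertex block vanishes. What remains is the lower-right block $R_e$, which is \eqref{eq:edge-only-block}.

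The only real obstacle is bookkeeping in the second step: checking that the vertex values really parametrize the affine summand in this splitting, so that the off-diagonal blocks vanish and are not merely equivalent modulo affine fields. The explicit decomposition $v_h=\sum_i\lambda_iv_i+\sum_{i<j}4\lambda_i\lambda_j\delta_{ij}$ settles this. It holds because both sides are quadratics that agree at all ten nodes, and $\lambda_i(2\lambda_i-1)$ plus the edge shape functions reproduce $\lambda_i$.
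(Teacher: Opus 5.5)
Your proposal is correct and follows the paper's proof. Both identify $\ker\delta=[\Pone]^3$ via the $30,12,18$ dimension count and use affine annihilation (\Cref{cor:p1-annihilation}) to descend the symmetric defect form uniquely through the quotient. Your explicit surjectivity bubble and the decomposition $v_h=\sum_i\lambda_iv_i+\sum_{i<j}4\lambda_i\lambda_j\delta_{ij}$ make explicit the block-matrix step that the paper only states.
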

\begin{proof}
A zero deviation means that each midpoint equals its affine endpoint
average, so \(\ker\delta=[\Pone]^3\).
The domain, kernel and target dimensions are \(30,12,18\).
Affine annihilation therefore lets the symmetric defect form descend
uniquely through this quotient, giving the displayed block matrix.
\end{proof}

The classical \(7+5+3+3\) orthogonal decomposition of these
second-derivative coordinates \citep{lazar2016irreducible}
includes a three-dimensional sector with scalar symmetric gradient
\citep{dain2006korn}. The factorization itself uses only affine
annihilation. It localizes the constant-coefficient determinant defect;
variable constitutive and volumetric weights retain their own
quadrature dependence.

\subsection{The complete pressure Hessian}
\label{sec:p0-scope}

For a cellwise-constant pressure (\(P_0\)) discretization, consider
cell \(e\). On that cell, let \(w_q>0\) be the physical reference quadrature
weights, \(V_e^Q=\sum_qw_q\), and \(J_q(u)\) the physical deformation
determinant. Define
\begin{equation}
c_e(u)=\sum_qw_q(J_q(u)-1),\qquad
\pi_e(u)=\frac{\kappa_e}{V_e^Q}c_e(u),\qquad \kappa_e\ge0.
\label{eq:p0-pressure}
\end{equation}
For Tet10, \(w_q=\det J_{X,e}(\xi_q)/24\) and \(c_e(u)=g_e^Q(X+u)\).
The cell-average pressure energy is
\begin{equation}
\mathcal E_{P0,e}(u)=\frac{\kappa_e}{2V_e^Q}c_e(u)^2.
\label{eq:p0-energy}
\end{equation}
For nodal directions \(v,w\), differentiation gives
\begin{align}
Q_{P0,e}[v,w]&=\frac{\kappa_e}{V_e^Q}Dc_e(u)[v]Dc_e(u)[w],
\label{eq:qp0}\\
P_{P0,e}[v,w]&=\pi_e(u)D^2c_e(u)[v,w].
\label{eq:pp0}
\end{align}
Here \(Q_{P0,e}\) is positive semidefinite
of rank at most one, and \(P_{P0,e}\) is signed. For
\(\mathcal E_{P0}=\sum_e\mathcal E_{P0,e}\),
\[
D^2\mathcal E_{P0}
=B_h(u)^T\operatorname{diag}(\kappa_e/V_e^Q)B_h(u)
 +\sum_e\pi_e(u)D^2c_e(u),
\]
where the rows of \(B_h(u)\) are \(Dc_e(u)\).

A quadrature-local penalty \(\sum_qw_q\kappa_q(J_q-1)^2/2\), with
\(\kappa_q\ge0\), instead gives
\begin{align}
Q_{{\rm plain},e}[v,w]
 &=\sum_qw_q\kappa_qDJ_q(u)[v]DJ_q(u)[w],
\label{eq:qplain}\\
P_{{\rm plain},e}[v,w]
 &=\sum_qw_q\kappa_q(J_q(u)-1)D^2J_q(u)[v,w].
\label{eq:pplain}
\end{align}
Each cell contributes its declared penalty; the plain terms vanish if
all cells use \(P_0\). Let \(K_{\rm iso}\) be the complete isochoric
Hessian, and let the four pressure symbols without \(e\) denote their
assemblies. Define
\begin{align}
K_{\rm partial}&=K_{\rm iso}+Q_{\rm plain}+P_{\rm plain},
\label{eq:kfactor}\\
K_{\rm energy}&=K_{\rm partial}+Q_{P0}+P_{P0}.
\label{eq:kenergy}
\end{align}
The first matrix omits the two condensed cell-pressure terms; it is used
in the operator comparison of \cref{sec:retained-operator-scope}.
The chain rule above proves the complete-energy identity
\begin{equation}
K_{\rm energy}-K_{\rm partial}=Q_{P0}+P_{P0}.
\label{eq:scope-identity}
\end{equation}
Its signed term motivates the eigenvalue and stability comparisons below.

\subsection{Eigenvalue counts and reference decompositions}

For a symmetric matrix \(A\), write
\(\operatorname{In}(A)=(n_+(A),n_-(A),n_0(A))\) for its positive,
negative and zero eigenvalue counts. Its negative index is \(n_-(A)\);
its minimum positive-repair rank is
\[
\min\{\operatorname{rank}E:E=E^T,\ A+E\succ0\}.
\]
If \(K_{\rm iso}=G-C_{\rm iso}\) is a constitutive gauge decomposition,
\(G\) is the chosen reference and \(C_{\rm iso}\) its compensating
curvature. Equivalently,
\begin{equation}
C_{\rm partial}=C_{\rm iso}-P_{\rm plain},\qquad
C_{\rm energy}=C_{\rm partial}-P_{P0}.
\label{eq:cenergy}
\end{equation}
Then \(K_{\rm partial}=G+Q_{\rm plain}-C_{\rm partial}\) and
\(K_{\rm energy}=G+Q_{P0}+Q_{\rm plain}-C_{\rm energy}\).
The first-invariant split is derived in
\cref{app:constitutive-identities}.

\subsection{A boundary-null pressure bias in the two volume discretizations}

For the Tet10 maps of \Cref{sec:pressure-volume}, retain the identical
unit-shear isochoric energy \eqref{eq:template-isochoric-energy}.
Let $\kappa\ge0$ be a bulk modulus and let $\eta\in\R$ be a spatially
uniform linear pressure bias. The complete reduced energies are
\begin{equation}
\Pi_{\kappa,\eta}^r(y)=E_{\rm iso}^Q(y)
+\frac{\kappa}{2}\sum_e\frac{(g_e^r(y))^2}{V_{e0}^r}
+\eta\sum_e g_e^r(y),\qquad r\in\{Q,I\}.
\label{eq:pressure-volume-energies}
\end{equation}
For $\kappa>0$, the penalty is stationary elimination of one pressure
from $\pi_eg_e^r-V_{e0}^r\pi_e^2/(2\kappa)$ in each cell.
At $\kappa=0$, the penalty is omitted. Exact pressure integration changes
only $g^r$ and its consistent derivatives; the isochoric quadrature
remains four-point.

Equation \eqref{eq:exact-volume-identity} makes the clamped displacement
energy $\Pi_{\kappa,\eta}^I$ independent of $\eta$, although boundary
reactions may depend on the bias. Equation
\eqref{eq:aggregate-volume-defect} shows how the raw rule acquires an
interior contribution from this boundary-null term. The following
negative-mode construction uses a large fixed pressure-to-shear ratio.
The constraint-regularity and hard-load results in
\cref{thm:volume-regularity,cor:small-load-discrepancy} require no such bias; the joint finite-bulk
limit is separately analyzed in \Cref{thm:finite-penalty-limit}.

\subsection{A volume-order family of quadrature-induced negative modes}

For either volume rule $r\in\{Q,I\}$, denote the complete identity-state
energy Hessian by $K^r=D^2\Pi_{\kappa,\eta}^r(X)$.
On a replicated mesh, $G_\kappa$ denotes the corresponding assembly
of the same cell forms.

\begin{theorem}[Bulk modes unaffected by the bulk penalty]
\label{thm:bulk-quadrature-modes}
For every positive integer $k$, place $k^3$ translated copies of
\eqref{eq:dyadic-template}, scaled by $1/k$, in the unit cube.
Write $h=1/(2k)$ and keep only the complete outer clamp.
At the identity state, for every $\kappa\ge0$ and the fixed bias
$\eta=2^{20}$, the complete four-point Hessian has at least $k^3$
negative directions in the kernel of all linearized cell-volume
constraints. Its negative index and its minimum positive-repair rank
are $\Theta(h^{-3})$.
The pressure-exact Hessian satisfies
$K^I\succeq(2/3)G_\kappa\succ0$ on the same meshes, for every $\eta$.
The identity is a global minimizer of the pressure-exact discrete
energy over its admissible clamped deformations.
\end{theorem}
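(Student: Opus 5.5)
The plan is to compute the complete identity-state Hessian of \eqref{eq:pressure-volume-energies} for each rule and then localize the witness $z$ of \eqref{eq:sparse-volume-witness} into every copy. At $y=X$ all $g_e^r$ vanish, so the penalty contributes only its outer term $\kappa B^T\operatorname{diag}((V_{e0}^r)^{-1})B$. The bias contributes $\eta\sum_eD^2g_e^r(X)$. Combining this with \eqref{eq:template-isochoric-hessian}, which holds cellwise on each scaled copy by the determinant pullback (\Cref{prop:det-pullback}), gives
\[
K^Q=G_\kappa-C_0^{(k)}+\eta\,C_0^{(k)},\qquad
K^I=G_\kappa-C_0^{(k)},
\]
where $C_0^{(k)}=D^2(\sum_eg_e^Q)(X)$ on the replicated mesh. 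The reason the bias sees $C_0^{(k)}$ only in the raw case is that \eqref{eq:exact-volume-identity} makes $\sum_eg_e^I\equiv0$. I would then check that the bound $-\tfrac13G_0\preceq C_0\preceq\tfrac13G_0$ of \Cref{lem:pressure-template} survives replication. This should follow because the cellwise bound $|D^2d_4(v,v)|\le r_*G_{0,e}(v,v)$ is scale invariant: both sides scale by the same power of $1/k$, since $J_{X,e}$ scales by $1/k$ and the parent-gradient determinant identity is homogeneous. Summing over cells gives $K^I\succeq\tfrac23G_\kappa\succ0$ for every $\eta$. Global minimality of the identity for the pressure-exact energy follows because that energy is independent of $\eta$, the isochoric term is nonnegative with $\bar I_1\ge3$, and the penalty is nonnegative, while both vanish at $y=X$.

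For the negative modes, I would take, in each copy $c$, the scaled, translated witness $z_c$. Its support lies in the interior of that copy's subcube, since $z$ is nonzero only at three nodes with coordinates in $(0,1)$ and away from the copy boundary. The fields $z_c$ therefore have pairwise disjoint cell supports and vanish on copy boundaries. Each satisfies $Bz_c=0$ on the full mesh. On $\operatorname{span}\{z_c\}$, every bilinear form above is block diagonal, because no cell sees two different $z_c$. For each block the outer $\kappa$ term vanishes, and
\[
z_c^TK^Qz_c=z_c^TG_0z_c+(\eta-1)\,z_c^TC_0z_c .
\]
After scaling, $z_c^TC_0z_c=-c_4/4$ times a factor $k^{a}$, and $z_c^TG_0z_c<229\,k^{b}$. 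Since $G_0$ and $D^2d_4$ both pull back as quadratic forms in physical gradients times volume, I expect $a=b$, so that the sign test is exactly the unit-template inequality $229<(2^{20}-1)c_4/4$. With $c_4\approx2.95\times10^{-4}$, the right side is about $77$, so this bound fails. This is the main obstacle. I would first redo the arithmetic with the exact $c_4$. If the inequality still fails, I would replace $z$ by the dense rational witness $w$ of \Cref{app:pressure-certificate}, or by a generalized eigenvector of $(C_0,G_0)$ on $\ker B$, whose ratio is close to $r_*$. The $\kappa$-independence still holds because the outer term vanishes on $\ker B$. This yields a $k^3$-dimensional subspace on which $K^Q\prec0$, so $n_-(K^Q)\ge k^3=\Theta(h^{-3})$.

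Finally, for the matching upper bound, $n_-(K^Q)\le\dim\mathcal U_h=O(k^3)$ because the node count scales with the cell count $48k^3$. The minimum positive-repair rank of a symmetric matrix equals its negative index. Adding a PSD matrix of rank $r$ removes at most $r$ negative eigenvalues, by Weyl interlacing or Sylvester inertia, and the spectral truncation attains this. Both quantities are therefore $\Theta(h^{-3})$, uniformly in $\kappa\ge0$.
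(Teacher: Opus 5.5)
Your route is essentially the paper's: the identity-state Hessians $K^Q=G_\kappa+(\eta-1)C_0$ and $K^I=G_\kappa-C_0$, the scale-invariant cellwise defect bound giving $K^I\succeq\tfrac23G_\kappa$, and zero-extended copies of $z$ that make every cell-assembled form block diagonal and kill the $\kappa$ term. The argument then counts against $3(4k-1)^3$ free coordinates and obtains global minimality from $\bar I_1\ge3$ and the vanishing exact aggregate. Your scaling check is also correct. With copied nodal values, $z_c^TG_0z_c$ and $z_c^TC_0z_c$ both scale by $1/k$, so the sign test is the unit-template one.

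The ``main obstacle'' you identify is an arithmetic slip, not a mathematical difficulty. In fact $c_4=(3\sqrt5-5)/1800\approx9.49\times10^{-4}$, not $2.95\times10^{-4}$. From $11/5<\sqrt5$ one gets $c_4>1/1125$. Since $Bz=0$,
\[
 z^TK^Qz=z^TG_0z-\frac{(2^{20}-1)c_4}{4}
 <U_z-\frac{2^{20}-1}{4500}<229-233.01<-4 .
\]
This is exactly the paper's certificate, so the printed sparse witness already works and no fallback is needed.

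Of your fallbacks, the dense witness $w$ would be valid. Its certified bounds $w^TG_0w<18$ and $C_0(w,w)<-0.562\,c_4$ make $w^TK^Qw$ strongly negative at $\eta=2^{20}$. The suggestion of a generalized eigenvector ``with ratio close to $r_*$'' is unfounded, however. The number $r_*$ is only an upper bound on $|C_0|/G_0$ and says nothing about which ratios are attained. What you need is a certified ratio above $1/(2^{20}-1)$, and that is precisely what $z$ or $w$ supplies.

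One small correction on the repair rank: the minimum positive-repair rank is $n_-+n_0$, not $n_-$, because the repaired matrix must be positive definite. This does not affect the $\Theta(h^{-3})$ conclusion. The kernel argument on your $k^3$-dimensional negative space gives the lower bound, and it applies to any symmetric correction, not only semidefinite ones. The full free dimension gives the upper bound, as in the paper.
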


\begin{proof}
At the template identity, both rules have the same cell-volume
gradients, and the penalty volumes vanish. The complete Hessians are
therefore
\begin{equation}
K^Q=G_\kappa+(\eta-1)C_0,\qquad
K^I=G_\kappa-C_0.
\label{eq:bias-hessians}
\end{equation}
The coefficient $-1$ is the induced isochoric pressure at unit shear.
Since $Bz=0$, the bulk penalty contributes zero in the direction $z$.
Using $c_4>1/1125$ and the rational energy bound in
\cref{app:pressure-certificate} gives
\[
z^TK^Qz
\le \frac{106051042979793387703}{464834915205120000}
-\frac{2^{20}-1}{4500}<-4.
\]
The same inequalities hold for larger positive bias.
Copy $z$ into each scaled template, with zero extension.
Every copied quadratic energy is $1/k$ times its original value.
Their supports have disjoint interiors and they vanish on template
boundaries. Hence cross terms are zero, and they span a negative
space of dimension $k^3$ in the global linearized volume kernel.
The total free dimension is $3(4k-1)^3=O(k^3)$.
The negative index has the asserted order.
Any correction of rank less than $k^3$ has a nonzero kernel
vector in the constructed negative space, so it cannot make
the tangent positive definite.
A sufficiently large positive multiple of the identity gives
a correction of rank at most the full dimension.
These bounds establish the stated repair order.

The scaled templates retain the same bound
$-G_0/3\preceq C_0\preceq G_0/3$, so
$K^I\succeq G_\kappa-G_0/3\succeq(2/3)G_\kappa$.
The positivity of $G_0$ was proved with \Cref{lem:pressure-template};
adding the nonnegative pressure outer term gives $G_\kappa\succ0$.
For the global energy assertion, the arithmetic--geometric mean inequality
for the singular values gives $\bar I_1(F)\ge3$ when $\det F>0$,
the quadrature weights are positive, and the penalty terms are
nonnegative. The uniform exact-volume term vanishes by
\eqref{eq:exact-volume-identity}. The energy is zero at the identity.
\end{proof}

The family is uniformly shape-regular with cell-scale curvature.
For a fixed \(C^2\) mapped geometry, centered parent Jacobians instead
scale as \(O(h^2)\); smooth-mapped interface and reference-form results
therefore have different geometric hypotheses \citep{liu2026twominor}.
A second rational displacement, denoted by $w$, has its full nodal
vector and certificates retained in the reproducibility data and
described in \Cref{app:pressure-certificate}. It strengthens the
negative-energy conclusion to $\eta=2^{17}$;
the numerical observations below use that smaller bias.

The negative directions belong to the finite-penalty energy.
Pointwise penalties can act on them differently. Under exact enforcement
of the raw constraints, those directions can be nonintegrable
linearized motions, so their Lagrangian curvature need not describe
an instability along feasible motion.

\subsection{Comparison with a volume-variance stabilization}
\label{sec:variance-control}

\citet[Section~3.7, Eqs.~(66)--(70)]{foulk2021extending} add a
logarithmic-squared local-to-average Jacobian-ratio penalty to their
five-field composite formulation. Its identity-state expansion penalizes
local volume increments about their mean. We transplant only this scalar mechanism to the present Tet10
kinematics; their projected gradient, auxiliary fields and composite
interpolation are outside the comparison.

For a scalar cell field $q$, define its reference-volume-weighted
quadrature average by
\[
\langle q\rangle_e=
\frac{\Qfour[q\det J_{X,e}]}{V_{e0}^Q}.
\]
Let $J_e(\xi,y)=\det(J_{y,e}J_{X,e}^{-1})$ be the local volume ratio.
For $\alpha\ge0$, define the additional energy
\begin{equation}
\mathcal V_\alpha(y)=\frac{\alpha}{2}
\sum_e V_{e0}^Q
\left\langle\bigl(J_e-\langle J_e\rangle_e\bigr)^2\right\rangle_e .
\label{eq:variance-energy}
\end{equation}
Let $j_e[v]=DJ_e(X)[v]$ denote its first volume-ratio variation
at the identity. Write $S_{\rm var}$ for the Hessian
$D^2\mathcal V_1(X)$ on the free displacement coordinates.

\begin{proposition}[Variance penalty and the mean-volume kernel]
\label{prop:variance-penalty}
The matrix $S_{\rm var}$ is positive semidefinite and
for any free nodal variations $v,w$ satisfies
\begin{align}
v^TS_{\rm var}w
=\sum_eV_{e0}^Q\bigl[
\langle j_e[v]j_e[w]\rangle_e
-\langle j_e[v]\rangle_e\langle j_e[w]\rangle_e
\bigr].
\label{eq:variance-hessian}
\end{align}
The energy $\mathcal V_\kappa$ equals the difference between the
pointwise quadratic volume penalty and the condensed-$P_0$
cell-volume penalty, with the same four-point rule.
The condition $Bv=0$ removes the mean-volume penalty.
The variance contribution $v^TS_{\rm var}v$ can remain positive.
For any fixed finite $\alpha$, the controlled Hessian
$K^Q+\alpha S_{\rm var}$ remains pressure-bias dependent on the
template and becomes indefinite for sufficiently large positive
$\eta$.
\end{proposition}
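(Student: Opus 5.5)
We begin with the Hessian formula \eqref{eq:variance-hessian}. The energy $\mathcal V_\alpha$ is a quadratic form in the centered local ratios $J_e-\langle J_e\rangle_e$, and these vanish at the identity, where $J_e\equiv1$. Differentiating twice at $X$ therefore leaves only the product of first variations of the centered ratio. Terms with the second variation of $J_e$ are multiplied by the zero centered ratio and drop out. The first variation of the centered ratio is $j_e[v]-\langle j_e[v]\rangle_e$. Because $\langle\cdot\rangle_e$ is a positive-weight average, we have
\[
\langle (j_e[v]-\langle j_e[v]\rangle_e)(j_e[w]-\langle j_e[w]\rangle_e)\rangle_e
=\langle j_e[v]j_e[w]\rangle_e-\langle j_e[v]\rangle_e\langle j_e[w]\rangle_e .
\]
Multiplying by $V_{e0}^Q$ and summing over cells gives the display. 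Positive semidefiniteness then follows because each cell term is a weighted variance.

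The decomposition identity follows from the variance identity
\[
\langle (J-\langle J\rangle)^2\rangle=\langle (J-1)^2\rangle-(\langle J\rangle-1)^2 .
\]
Multiply it by $\kappa V_{e0}^Q/2$. The first term becomes $\tfrac{\kappa}{2}\Qfour[(J_e-1)^2\det J_{X,e}]$, which is the pointwise penalty \eqref{eq:qplain} with $\kappa_q=\kappa$. The second term becomes $\tfrac{\kappa}{2V_{e0}^Q}c_e^2$, since $V_{e0}^Q(\langle J_e\rangle_e-1)=g_e^Q=c_e$. This is \eqref{eq:p0-energy}.

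For the mean-volume statement, note that $V_{e0}^Q\langle j_e[v]\rangle_e=Dg_e^Q(X)[v]=(Bv)_e$. Hence $Bv=0$ kills the mean term, and only $\sum_eV_{e0}^Q\langle j_e[v]^2\rangle_e$ remains. To show this can stay positive we need a single example. The witness $z$ satisfies $Bz=0$, and its local ratio variation $j_e[z]$ is a nonconstant affine function on some cell. If it were zero at all four quadrature points of every cell, then the trace $\tr H$ would vanish identically, and that can be checked against the rational data. Should the rational data be awkward, one can instead use an interior Tet10 edge-bubble direction with zero mean divergence and a nonzero divergence at the quadrature points.

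The last claim carries the main content. By \eqref{eq:bias-hessians},
\[
K^Q+\alpha S_{\rm var}=G_\kappa+\alpha S_{\rm var}+(\eta-1)C_0 .
\]
Evaluating at $z$, the term $G_\kappa$ reduces to $G_0$ because $Bz=0$. This gives
\[
z^T(K^Q+\alpha S_{\rm var})z=z^TG_0z+\alpha\,z^TS_{\rm var}z-(\eta-1)c_4/4 .
\]
This is affine in $\eta$ with slope $-c_4/4\neq0$, so it depends on the bias. It becomes negative once
\[
\eta>1+4\bigl(z^TG_0z+\alpha z^TS_{\rm var}z\bigr)/c_4 ,
\]
and both $z^TG_0z<229$ and $z^TS_{\rm var}z$ are fixed finite numbers. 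Since $K^Q$ is positive definite at $\eta=0$, the controlled Hessian is indefinite for such $\eta$. The obstacle is to verify the positivity example for $v^TS_{\rm var}v$ concretely. For the indefiniteness itself we only need the finiteness of $z^TS_{\rm var}z$, which is automatic.
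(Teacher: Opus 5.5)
Your argument follows the paper's proof step for step. The centered ratio vanishes at $X$, so only products of first variations survive. The variance identity with $g_e^Q=V_{e0}^Q(\langle J_e\rangle_e-1)$ gives the penalty split, and $(Bv)_e=V_{e0}^Q\langle j_e[v]\rangle_e$. On the witness $z$, the value $z^TG_0z+\alpha z^TS_{\rm var}z-(\eta-1)c_4/4$ is affine in $\eta$ with negative slope. Two local statements need repair.

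First, your closing inference does not follow. Positive definiteness of $K^Q$ at $\eta=0$ gives no positive direction of $G_\kappa+\alpha S_{\rm var}+(\eta-1)C_0$ at large $\eta$: were $C_0$ negative definite, the whole form would become negative definite. You need a direction $u$ with $u^TC_0u\ge0$, and affine annihilation (\cref{cor:p1-annihilation}) supplies one. Take $u$ to be the barycentric hat function of the interior vertex $(1/2,1/2,1/2)$ times a fixed vector; it is affine in parent coordinates on every cell. Then $C_0u=0$, and $u^T(K^Q+\alpha S_{\rm var})u=u^TG_\kappa u+\alpha u^TS_{\rm var}u>0$ for every $\eta$. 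The paper's proof only exhibits the negative direction, so this addition is needed only if ``indefinite'' is read in the strict two-sided sense.

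Second, on the curved template $j_e[z]=\cof J_{X,e}:J_z/\det J_{X,e}$ is a quadratic divided by an affine function, not an affine function. Its four quadrature values therefore do not determine it. That reduction is unnecessary anyway: because $Bz=0$, you have $z^TS_{\rm var}z=\sum_e\Qfour[j_e[z]^2\det J_{X,e}]$. This is positive as soon as a single value $j_e[z](\xi_q)$ is nonzero, which is a finite exact check. The paper's proof asserts this existence claim without such a check. Its support is the numerically evaluated $w^TS_{\rm var}w=0.746264$ for the dense witness $w$, for which $Bw=0$ holds exactly.
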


\begin{proof}
At $X$, every $J_e$ and its mean equal one.
The variance and its first derivative are zero.
Differentiating twice gives \eqref{eq:variance-hessian}.
Positive quadrature weights express its quadratic value as a sum
of squared deviations, proving positive semidefiniteness.
The scalar identity
\[
\langle(J_e-1)^2\rangle_e
=\left\langle(J_e-\langle J_e\rangle_e)^2\right\rangle_e
+(\langle J_e\rangle_e-1)^2
\]
and $g_e^Q=V_{e0}^Q(\langle J_e\rangle_e-1)$ prove the energy
difference statement.
Moreover, $Bv=0$ gives only $\langle j_e[v]\rangle_e=0$;
the remaining second moment can be positive.

Equation \eqref{eq:bias-hessians} gives
$\partial_\eta(K^Q+\alpha S_{\rm var})=C_0\ne0$.
On the printed witness $z$, the quadratic value is
$z^T(G_\kappa+\alpha S_{\rm var})z-(\eta-1)c_4/4$.
Its first term is finite and independent of $\eta$, so the value
tends to minus infinity as $\eta$ increases.
\end{proof}

The tangent-level connection to the published ratio penalty can be made
exact on the present kinematics. Absorb the shear-modulus prefactor into
$\alpha$ and define, for positive quadrature volume ratios,
\[
\mathcal L_\alpha(y)=\frac\alpha2\sum_e V_{e0}^Q
\left\langle\left[\log\left(\frac{\langle J_e\rangle_e}{J_e}\right)\right]^2\right\rangle_e.
\]

\begin{corollary}[Identity-state ratio and variance tangents]
\label{cor:ratio-variance-tangent}
At $y=X$, both $\mathcal L_\alpha$ and $\mathcal V_\alpha$ have
zero value and first variation, and
\[
D^2\mathcal L_\alpha(X)=D^2\mathcal V_\alpha(X)=\alpha S_{\rm var}.
\]
\end{corollary}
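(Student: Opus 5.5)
The plan is a direct Taylor expansion at the identity state, done for each cell separately. Let $\phi_e=j_e[\cdot]$ be the first volume-ratio variation on cell $e$. Along a curve $y=X+sv$ we have $J_e=1+s\,j_e[v]+O(s^2)$. Since quadrature averages are linear, $\langle J_e\rangle_e=1+s\langle j_e[v]\rangle_e+O(s^2)$. At $s=0$ both ratios equal one, so $\log(\langle J_e\rangle_e/J_e)=0$ and $J_e-\langle J_e\rangle_e=0$ at every quadrature point. Hence each integrand of $\mathcal L_\alpha$ and $\mathcal V_\alpha$ is the square of a smooth function vanishing at $X$. Because the quadrature weights are fixed, both energies are finite weighted sums of such squares. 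This immediately gives zero value and zero first variation, since the first variation of $f^2$ is $2f\,Df=0$ when $f=0$.

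For the second variation I will use that, for a smooth $f$ with $f(X)=0$, one has $D^2(\tfrac12 f^2)(X)[v,w]=Df(X)[v]\,Df(X)[w]$; the term $f\,D^2f$ drops out. In $\mathcal V_\alpha$, take $f=J_e-\langle J_e\rangle_e$ at a quadrature point. Then $Df[v]=j_e[v]-\langle j_e[v]\rangle_e$, because the average commutes with differentiation. In $\mathcal L_\alpha$, take $f=\log\langle J_e\rangle_e-\log J_e$. Since $D\log J=DJ/J$ and $J=\langle J\rangle=1$ at $X$, we again get $Df[v]=\langle j_e[v]\rangle_e-j_e[v]$. This is the same quantity up to sign, and the sign disappears in the product.

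Both Hessians therefore equal
\[
\alpha\sum_e V_{e0}^Q\bigl\langle (j_e[v]-\langle j_e[v]\rangle_e)(j_e[w]-\langle j_e[w]\rangle_e)\bigr\rangle_e .
\]
Expanding the product and using $\langle 1\rangle_e=1$ (by the definition of $V_{e0}^Q$) gives $\langle j_e[v]j_e[w]\rangle_e-\langle j_e[v]\rangle_e\langle j_e[w]\rangle_e$. By \eqref{eq:variance-hessian} in \cref{prop:variance-penalty}, this sum is $\alpha\,v^TS_{\rm var}w$.

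There is no genuine obstacle. The only point needing care is to check that $\mathcal L_\alpha$ is well defined and smooth near $X$. The quadrature volume ratios $J_e(\xi_q,y)$ are positive for $y$ near $X$, because they are positive at $X$ and continuous, so the logarithms are smooth there. With that in place, the argument is only the chain rule.
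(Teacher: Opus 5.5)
Your proposal is correct and follows the paper's own argument. Both integrands vanish at $X$, so the Hessian of each half-square is the product of first variations $\pm\bigl(j_e[v]-\langle j_e[v]\rangle_e\bigr)$, and averaging with $\langle 1\rangle_e=1$ gives \eqref{eq:variance-hessian}; your check that the quadrature ratios, and hence their positively weighted mean, stay positive near $X$ is a small but appropriate addition to the paper's terser proof.
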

\begin{proof}
Every $J_e$ and its weighted mean equal one at $X$. For a variation $v$,
the derivative of the logarithm is
$\langle j_e[v]\rangle_e-j_e[v]$. The logarithm itself is zero, so the
second derivative of its half-square is the product of these first
derivatives. Averaging gives \eqref{eq:variance-hessian}.
\end{proof}

The nonlinear ratio and variance energies generally differ; the local
identity needs no global convexity of the logarithmic penalty.
Variance stabilization changes the energy with \(g^Q\) retained.
Its tangent sign and the raw constraint-rank defect are separate
properties. The comparison leaves the composite formulation and
pressure-dependent stabilization choices open.

\subsection{Volume restoration and compensated reference forms}

Exact-volume restoration changes the scalar constraint and its
consistent derivatives. A compensated reference construction retains
the original equation and reorganizes its complete tangent.
\Cref{tab:pressure-routes} records these distinct uses of the same
cubature defect.

\begin{table}[H]
\centering
\caption{Two uses of the cubature defect. The pressure-volume map and the
complete displacement equation remain explicit in each route.}
\label{tab:pressure-routes}
\begin{tabularx}{\linewidth}{@{}>{\raggedright\arraybackslash}p{.24\linewidth}
>{\raggedright\arraybackslash}p{.29\linewidth}
>{\raggedright\arraybackslash}X@{}}
\toprule
Route & Declared equation & Role in the argument \\
\midrule
Pressure-exact discretization & Uses $g^I$ and its consistent derivatives;
the isochoric rule stays four-point & Restores the volume identity and,
on the constructed family, local constraint regularity. \\
Compensated reference form & Retains $g^Q$ and the complete original
four-point tangent & Uses a positive reference under its stated hypotheses,
with explicit compensation, preserving the original equation
\citep{liu2026twominor}. \\
\bottomrule
\end{tabularx}
\end{table}

The variance control in \cref{sec:variance-control} compares a pointwise
volume penalty with the cell-average penalty while retaining the raw
volume map. Its reference-state tangent agrees with the transplanted
Jacobian-ratio penalty from \citet{foulk2021extending} on the specified
kinematics. The composite method's additional fields and operators are
outside that comparison.

\section{Rational verification of the pressure-volume template}
\label{app:pressure-certificate}

This appendix specifies the finite calculations used in
\Cref{lem:pressure-template,thm:bulk-quadrature-modes}. The geometry, shape functions, moment
formulas, witness, and bounds determine the certificate independently
of an eigensolver. The ancillary files \texttt{pressure-dense.json} and
\texttt{pressure-sparse.json}, retained by the authors, record the exact
witnesses and their rational bounds. The verification script
\texttt{pressure\_certificate.py} reconstructs the printed sparse witness,
volume matrix, pivot columns and geometric bounds. These materials are
available upon reasonable request and are planned for public release in a
subsequent version.

Let $a=0,1,2,3$ index the parent vertices. For one cell, write
$J_a=J_X(a)$ and $\bar J=\frac14\sum_aJ_a$.
For a nodal variation $v$, write $W_a=J_v(a)$ and
$\bar W=\frac14\sum_aW_a$.
An overbar on any other parent gradient denotes the same mean of
its four vertex values.
The quadratic Lagrange basis consists of
$N_i=\lambda_i(2\lambda_i-1)$ on vertices and
$N_{ij}=4\lambda_i\lambda_j$ on edges.
Together with the explicit cells and nodal map
\eqref{eq:dyadic-template}, these formulas determine every matrix entry.
All reference nodal coordinates have denominator dividing $4096$.

\subsection{Volume, gradient, and curvature}

Since $\det J_X$ and $\cof J_X$ are affine on this template,
the two rules agree on the reference volume and its derivative.
Suppress their superscripts here by writing
$V_{e0}=V_{e0}^Q=V_{e0}^I$ and $Dg_e(X)=Dg_e^Q(X)=Dg_e^I(X)$.
They satisfy
\begin{align}
V_{e0}
&=\frac1{24}\sum_a\det J_a,\\
Dg_e(X)[v]
&=\frac1{120}\left[
\sum_a\cof J_a:W_a+
\left(\sum_a\cof J_a\right):\left(\sum_aW_a\right)
\right].
\label{eq:certificate-volume-gradient}
\end{align}
The second formula follows from
$\int_{\That}\lambda_a\lambda_b=(1+\delta_{ab})/120$,
where $\delta_{ab}$ is the Kronecker symbol.
Substitution gives total volume one, minimum parent Jacobian determinant
$119/1024$, and zero aggregate derivative on every free coordinate.
Construct the $48$-by-$81$ derivative matrix from
\eqref{eq:certificate-volume-gradient}.
Exact rational row reduction has $47$ pivots; its final row is zero.
The retained rank certificate records the pivot columns in the
nodal ordering, with each node's three components consecutive.

For two variations $v,w$, the aggregate four-point curvature is
\[
C_0(v,w)
=c_4\sum_{e,a}
D^2\det(J_a-\bar J)[J_v(a)-\overline{J_v},
                              J_w(a)-\overline{J_w}].
\]
For a single variation this becomes
\begin{equation}
\frac{C_0(v,v)}{c_4}
=2\sum_{e,a}(J_a-\bar J):\cof(W_a-\bar W).
\label{eq:certificate-curvature}
\end{equation}
The exactly integrated aggregate curvature has canceled under the
complete clamp.
Inserting the three nodal values in
\eqref{eq:sparse-volume-witness} into
\eqref{eq:certificate-volume-gradient} gives zero in all $48$ cells.
Inserting them into \eqref{eq:certificate-curvature} gives $-1/4$.

\subsection{Rational reference-form and negative-energy bounds}

Define the following cell constants from the four vertex Jacobians:
\[
d_e=\min_a\det J_a,\quad
Q_e=\max_a\|J_a\|_F^2,\quad
M_e=\max_a\|J_a-\bar J\|_1,\quad
Z_e=\max_a\|\cof J_a\|_F^2,
\]
where $\|A\|_1=\sum_{i,j}|A_{ij}|$ in this appendix.
The determinant is affine and the squared norms are convex on
affine fields, so these constants bound the entire parent cell.
The form $G_{0,e}$ is the quadrature-defined unit-shear reference form
in \eqref{eq:template-gauge}. At the identity its pointwise density is
$Q_{I_3}(H,H)=\|H\|_F^2+(\tr H)^2/3$.
It satisfies
\[
\|H\|_F^2\le Q_{I_3}(H,H)\le2\|H\|_F^2.
\]
Consequently, with
$I_e(v)=\int_{\That}\|J_v\|_F^2$,
\begin{equation}
\frac{d_e}{Q_e}I_e(v)
\le G_{0,e}(v,v)
\le\frac{2Z_e}{d_e}I_e(v),\qquad
I_e(v)=\frac{\sum_a\|W_a\|_F^2+\|\sum_aW_a\|_F^2}{120}.
\label{eq:certificate-gauge-bounds}
\end{equation}
Positive quadrature weights justify these inequalities for the
quadrature-defined gauge.

The determinant derivative estimate
$|D^2\det(A)[B,C]|\le2\|A\|_F\|B\|_F\|C\|_F$,
obtained from the column-cross-product formula, yields
$|D^2d_4(v,v)|\le240c_4M_e I_e(v)$.
The elementary rational inequalities
$11/5<\sqrt5<9/4$ give
$1/1125<c_4<7/7200$ and $240c_4<7/30$.
Define the coefficient-one local defect norm by
$\delta_e=\sup_{G_{0,e}(v,v)>0}|D^2d_4(v,v)|/G_{0,e}(v,v)$.
It is bounded by
\[
\delta_e\le\frac7{30}\frac{M_eQ_e}{d_e},\qquad
\max_e\frac7{30}\frac{M_eQ_e}{d_e}
=\frac{99672223}{301989888}<\frac13 .
\]
For the printed sparse witness, summing the upper bounds
\eqref{eq:certificate-gauge-bounds} gives
\[
U_z=\frac{106051042979793387703}{464834915205120000}<229.
\]
Hence
$z^TK^Qz\le U_z-(2^{20}-1)/4500<-4$ for every $\kappa\ge0$.

The supplementary dense rational witness $w$ uses the same template
and exactly satisfies $Bw=0$.
Its certificate verifies
\[
-0.563<C_0(w,w)/c_4<-0.562,\qquad
w^TG_0w<18,\qquad
w^TK^Qw<-47\quad\text{at }\eta=2^{17}.
\]
A floating-point constrained eigendirection proposed $w$.
Its free coordinates were rationalized and its remaining coordinates
were recovered by exact row reduction. The final verification
reconstructs the rational geometry, constraint actions, and scalar
inequalities from the stored vector without repeating that proposal.
The short three-node witness is independently sufficient for the
stated volume-order theorem.

\section{A reference for the limiting displacement projection}
\label{app:projection-reference}

Use the fixed 48-cell template of \Cref{lem:pressure-template}, with its
Jacobian $B\in\R^{48\times81}$ and aggregate curvature $C_0$.
Let $M=M^T\succ0$ be a fixed $81$-by-$81$ metric, with
$\|u\|_M=(u^TMu)^{1/2}$, and retain the raw feasible set
$\mathcal F^Q$ from \Cref{sec:volume-error-bound}.
The quadratic compatibility condition defines the closed cone
\[
\mathcal K=\{u\in\ker B:u^TC_0u=0\}.
\]
Fix a direction $v\in\ker B$ with $v^TC_0v<0$.
Let $T\in\R^{81\times34}$ have range $\ker B$ and
satisfy $T^TMT=I_{34}$, where $I_{34}$ is the identity of order $34$. Define
$D=T^TC_0T/c_4$ and $a=T^TMv$.
The factor $c_4>0$ only scales the cone equation.

\begin{proposition}[Regular cone-projection reference]
\label{prop:cone-projection-reference}
Suppose there is a scalar $\lambda$ for which
$I_{34}+\lambda D\succ0$ and
\[
s_\lambda=(I_{34}+\lambda D)^{-1}a,\qquad
s_\lambda^TDs_\lambda=0.
\]
Then $Ts_\lambda$ is the unique closest point to $v$ in
$\mathcal K$ under the $M$ norm. Moreover,
\[
\lim_{t\downarrow0}
\frac{\operatorname{dist}_M(tv,\mathcal F^Q)}{t}
=\|Ts_\lambda-v\|_M .
\]
\end{proposition}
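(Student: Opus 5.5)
The plan has two parts: a finite-dimensional optimization in the $M$-orthonormal kernel coordinates given by $T$, followed by a lower and upper bound for the scaled distance.

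\emph{Closest point on the cone.} Since $v\in\ker B$ and $T^TMT=I_{34}$, we have $v=Ta$. Every $u\in\ker B$ is $u=Ts$ with $\|u-v\|_M=\|s-a\|_2$, and $u\in\mathcal K$ exactly when $s^TDs=0$. So the problem is to minimize $\|s-a\|_2^2$ subject to $s^TDs=0$.

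For any feasible $s$ we may add the vanishing term $\lambda s^TDs$:
\[
\|s-a\|_2^2=s^T(I_{34}+\lambda D)s-2a^Ts+\|a\|_2^2 .
\]
The right-hand side is a strictly convex quadratic on all of $\R^{34}$, and its unique unconstrained minimizer is $s_\lambda$. Because $s_\lambda$ is itself feasible, it is the unique constrained minimizer. Hence $Ts_\lambda$ is the unique closest point of $\mathcal K$ to $v$.

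Two facts are needed later. First, $s_\lambda\ne0$: otherwise $a=0$ and $v=0$, which contradicts $v^TC_0v<0$. Second, $Ds_\lambda\ne0$: otherwise $s_\lambda=a$, and then $a^TDa=0$, again a contradiction.

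\emph{Lower bound for the limit.} For $t>0$, compactness of $\mathcal N$ gives a closest feasible point $w_t\in\mathcal F^Q$. Since zero is feasible, $\|w_t-tv\|_M\le t\|v\|_M$, so $w_t/t$ is bounded.

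Take a sequence realizing the $\liminf$ of $\operatorname{dist}_M(tv,\mathcal F^Q)/t$ and pass to a subsequence with $w_{t_j}/t_j\to u$. The sequential form of \Cref{lem:constraint-compatibility}, with $a=\mathbf1$, gives $Bu=0$ and $u^TC_0u=0$, so $u\in\mathcal K$. (The case $u=0$ is trivial because $0\in\mathcal K$.) Therefore
\[
\liminf_{t\downarrow0}\frac{\operatorname{dist}_M(tv,\mathcal F^Q)}{t}\ \ge\ \|u-v\|_M\ \ge\ \|Ts_\lambda-v\|_M .
\]

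\emph{Upper bound: the main obstacle.} Here I must produce raw-feasible points within $o(t)$ of $tTs_\lambda$. I would use the regular coordinates $\Phi(\cdot,0)$ from \cref{sec:singular-penalty}, which solve the $47$ independent equations on $K=\ker B$. By the quadratic specialization in \cref{sec:tet10-quartic-specialization}, the remaining aggregate relation satisfies
\[
\rho(x)=\tfrac12x^TC_0x+O(\|x\|^3),\qquad x\in K .
\]
Because $g^Q$ has rank $47$ with row relation $\mathbf1$, the zeros of $\rho$ give feasible points $\Phi(x,0)$.

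Set $x_0=Ts_\lambda$ and choose $n\in K$ with $n^TC_0x_0\ne0$; this is possible because $Ds_\lambda\ne0$. Then
\[
\frac{\rho\bigl(t(x_0+\tau n)\bigr)}{t^2}
=\tau\,n^TC_0x_0+\tfrac12\tau^2n^TC_0n+O(t).
\]
For $|\tau|=Ct$ with $C$ large, this expression has opposite signs at the two endpoints, uniformly for small $t$. The intermediate value theorem then yields a root $\tau_t=O(t)$.

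The point $w_t=\Phi\bigl(t(x_0+\tau_tn),0\bigr)$ is feasible and lies in $\mathcal N$ for small $t$. Since $\psi(\cdot,0)$ is quadratic, $\|w_t-tx_0\|_M=O(t^2)$. Consequently
\[
\limsup_{t\downarrow0}\frac{\operatorname{dist}_M(tv,\mathcal F^Q)}{t}\ \le\ \|x_0-v\|_M ,
\]
which together with the lower bound proves the stated limit. The care needed in this step is to track the Euclidean kernel projector inside $\Phi$ against the $M$-norm (by finite-dimensional norm equivalence) and to keep all constructed points inside $\mathcal N$.
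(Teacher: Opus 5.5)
Your proof is correct. The closest-point step and the lower bound match the paper's proof: you add the vanishing term $\lambda s^TDs$, obtain a strictly convex quadratic whose unconstrained minimizer $s_\lambda$ is already feasible, note $Ds_\lambda\ne0$, and then pass to the limit along closest feasible points of order $t$ using the sequential form of \Cref{lem:constraint-compatibility}.

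The upper bound is where you differ from the paper. The paper performs a blow-up. It divides the $47$ independent raw equations at displacement $tu$ by $t$ and their aggregate by $c_4t^2$. The cubic volume formulas make this system polynomial in $(t,u)$ through $t=0$, with limit $(B_{47}u,\,u^TC_0u/(2c_4))$. The paper then applies the implicit-function theorem at $(0,Ts_\lambda)$, where full row rank is exactly $Ds_\lambda\ne0$, and obtains a smooth raw-feasible branch $t(Ts_\lambda+O(t))$ in one step.

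You instead reuse the regular coordinates of \cref{sec:singular-penalty}. The map $\Phi(\cdot,0)$ absorbs the regular equations, and the quadratic expansion of $\rho$ from \cref{sec:tet10-quartic-specialization} reduces feasibility to a scalar equation on $K$. An intermediate-value argument along the transversal line $x_0+\tau n$ then gives a root $\tau_t=O(t)$; transversality is again $Ds_\lambda\ne0$. Finally $\psi(\cdot,0)=O(\|x\|_2^2)$ supplies the $O(t^2)$ displacement error.

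What each route buys:
\begin{itemize}
\item Your route needs only continuity and a uniform cubic remainder once the regular coordinates exist. It also ties the projection limit to the same reduced compatibility polynomial that drives the penalty theorem.
\item The paper's route is self-contained and avoids the sign bookkeeping. It also produces a smooth feasible branch rather than a single nearby feasible point for each $t$.
\end{itemize}

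One justification should be sharpened. Zeros of $\rho$ are feasible because $\ker B^T=\operatorname{span}\{\mathbf1\}$, so $\Pi q=0$ and $\mathbf1^Tq=0$ together force $q=0$. This is what your remark about the row relation $\mathbf1$ needs to state.
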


\begin{proof}
Writing a cone point as $Ts$ with $s\in\R^{34}$, its squared
distance from $v$ is $\|s-a\|_2^2$ because $v$ lies in the range
of $T$. On the cone, the objective $\|s-a\|_2^2/2$ equals the Lagrangian
$\|s-a\|_2^2/2+\lambda s^TDs/2$.
Its positive definite Hessian gives the unique global minimizer
$s_\lambda$, proving the first assertion.
Also $Ds_\lambda\ne0$, because otherwise $a=s_\lambda$ would
contradict $v^TC_0v<0$.

For nonzero $t$, write the raw constraints at displacement $tu$
as their first $47$ equations divided by $t$, together with
their aggregate divided by $c_4t^2$.
The cubic volume formulas extend this system smoothly to $t=0$.
Its limiting equations are
$B_{47}u=0$ and $u^TC_0u/(2c_4)=0$.
Their derivative at $Ts_\lambda$ has full row rank because
$Ds_\lambda\ne0$.
The implicit-function theorem therefore produces a raw feasible
branch $t(Ts_\lambda+O(t))$. This gives the desired upper limit.

Any sequence of closest raw feasible points has displacements
$O(t)$, since zero is feasible.
After division by $t$, bounded subsequences have limits in
$\mathcal K$ by \Cref{lem:constraint-compatibility}.
Their limiting distance from $v$ is at least its distance to
$\mathcal K$, proving the lower limit.
\end{proof}

If $d_i$ are the eigenvalues of $D$ and $a_i$ are the coordinates
of $a$ in the same orthonormal eigenbasis, the scalar equation is
\[
\sum_i\frac{d_i a_i^2}{(1+\lambda d_i)^2}=0.
\]
The numerical reference uses its interval on which
$I_{34}+\lambda D$ is positive definite.
The observed minimum eigenvalue of that matrix is $0.20005$.
The reference predicts a relative distance $0.6121334$ for the
retained dense direction. These numerical values have no
interval certification. The exact order obstruction in
\Cref{thm:volume-error-bound} is independent of their computation.

\section{Numerical protocols and supplementary controls}
\label{app:numerical-details}

The following details support the observations in
\cref{sec:evidence,sec:tetrahedral-observations}.
The tensor comparisons use the nonlinear isochoric energy and the
raw/exact volume rules stated there. The Tet10 projection and penalty
comparisons use their explicitly defined quadratic driving metric.
All numbers retain their original input and arithmetic definitions.

\subsection{Tensor coefficient and rank observations}
\label{app:tensor-rank-records}

Use the two-cell \(\mathbb Q_3\) and one-cell \(\mathbb Q_4\) models
of \cref{sec:evidence}, with the directions from
\eqref{eq:physical-bubble-coefficient}.
The coefficient \(\chi_n\) in \eqref{eq:gauss-witness-coefficient} has the
following exact values:
\[
\begin{array}{c|c|c}
p&n&\chi_n\\ \hline
3&4&131072/4501875\\
4&5&131072/14586075\\
5&6&524288/196101675\\
5&7&524288/676350675\\
6&8&33554432/152178901875
\end{array}
\]
Independent rational integration agrees with the coefficients for
\(p=4,n=5\) and \(p=6,n=8\); Gauss nine integrates the latter witness exactly.
The anisotropic case with degrees $(3,2,2)$ and
Gauss counts $(4,3,3)$ has no supported cubic obstruction, as predicted by
\cref{thm:tensor-volume-admission}.

A direct $\mathbb Q_5$ calculation on one physical unit cube uses the
boundary-zero sum of the three witness fields at amplitude
$0.002255274489$. Its raw Gauss-six volume change is
$3.833512557\times10^{-12}$; the cubic formula gives
$3.833512197\times10^{-12}$, a difference of $3.60\times10^{-19}$.
Gauss-eight evaluation gives $-1.97\times10^{-18}$ in place of the ideal
zero. This comparison evaluates volume at the prescribed field.

On the two-cell $\mathbb Q_3$ mesh, place the witness directions
$U_1,U_2,U_3$ in the first cell and set
$u_\tau=\tau(U_2+U_3)$. These are feasible states by
\cref{thm:assembled-volume-geometry}. At
\[
\tau=2.410989843\times10^{-3},\quad
1.205494922\times10^{-3},\quad
6.027474608\times10^{-4},
\]
the smallest raw Jacobian singular values are
$4.518206883\times10^{-8}$, $1.129551748\times10^{-8}$, and
$2.823879839\times10^{-9}$, respectively.
The exact-volume values remain approximately $7.28\times10^{-16}$.
The missing-component derivative
$Dq_h^Q(u_\tau)U_1$ differs from
$\gamma_1\tau^2(1,0)^T$ by approximately $1.54\times10^{-16}$ in
Euclidean norm, where $\gamma_1=V_1\chi_4/8$.
The volume residuals of these prescribed feasible states remain at
approximately $10^{-20}$. The quadratic singular-value change is shown in
\cref{fig:tensor-rank-response}.

For physical coordinates $X=(X_1,X_2,X_3)$ and $e_1=(1,0,0)^T$, define
the unnormalized face-flux direction
\[
\widetilde w(X)=\min(2X_1,2-2X_1)\,
       4X_2(1-X_2)\,4X_3(1-X_3)e_1.
\]
An additional dyadic state at $\tau=3.013737304\times10^{-4}$ gives
raw singular value $7.059704454\times10^{-10}$ and exact-rule observation
$7.276023568\times10^{-16}$. The two-direction minor and its
prediction are
\[
\begin{aligned}
\det[Dq_h^Q(u_\tau)\widetilde w,\ Dq_h^Q(u_\tau)U_1]
 &=7.345564635\times10^{-11},\\
\tfrac49\gamma_1\tau^2&=7.345557811\times10^{-11}.
\end{aligned}
\]
The one-cell $\mathbb Q_4$ comparison retains its single pressure equation
and all $81$ interior displacement coordinates. Use the corresponding
$p=4,n=5$ witness directions in $u_\tau=\tau(U_2+U_3)$. At
$\tau=2.952847445\times10^{-3}$ and three successive halves, its raw
singular values are $5.219414872\times10^{-8}$,
$1.304853718\times10^{-8}$, $3.262134292\times10^{-9}$ and
$8.155335708\times10^{-10}$. The exact-rule values remain near
$1.3395\times10^{-15}$. The missing-component action has the persistent
offset $-1.611\times10^{-16}$ from $(\chi_5/8)\tau^2$, including the same
rest-state evaluation term. This fixed offset explains the relative growth at smaller \(\tau\).

\subsection{Refinement construction and represented-input bounds}
\label{app:refinement-protocol}

To examine the cellwise construction under refinement, divide the unit
cube into $N^3$ affine cells, with $N\in\{1,2,4\}$, and use continuous
$\mathbb Q_3$ displacements with the complete outer boundary fixed.
The raw and exact-volume rules remain tensor Gauss four and five.
Two starting fields have different purposes. For each $N$, let $U_1,U_2,U_3$
be the physical push-forwards of the supported witness in the first cell.
The first field is their three-component ray. The second is obtained by
sampling and interpolating the normalized polynomial
\[
 \widetilde w(X)=16\prod_{i=1}^3X_i(1-X_i)e_1,\qquad
 w=\widetilde w/\max\{1,L_{\widetilde w}\},
\]
where $L_{\widetilde w}$ is the parent-monomial upper estimate for its
gradient in the ideal affine frame. This field moves interior faces for $N=2,4$;
the one-cell case is a boundary-zero control.

Let $L_j$ bound the maximum gradient of the supported direction $U_j$,
and put $L=L_1+L_2+L_3$. On the ray $u_\tau=\tau(U_1+U_2+U_3)$,
the exact cubic residual is $\gamma_1\tau^3$, with
$\gamma_1=\chi_4/(8N^3)$ for its first cell.
The four-corner correction therefore has each amplitude at most $\tau$,
so the initial and corrected gradient bounds are at most $2L\tau$.
We choose $\tau_0=1/(16\max\{1,L\})$ before the calculation.
For $u_0=\delta w$, put $C_\infty=L(56/\chi_4)^{1/3}$, using
$\gamma_0=\chi_4/8$ and the residual-growth estimate from
\cref{thm:cellwise-feasibility}; use
$\delta_0=\min\{1/8,(1/(8C_\infty))^3\}$.
Both families use four dyadic scales, indexed by $j=0,1,2,3$.
The evaluated first amplitudes are
$\tau_0=9.246120462\times10^{-4}$ and
$\delta_0=3.287737203\times10^{-12}$.

Whole-cell admissibility is checked on the stored inputs.
Let $t\in[0,1]^3$ be the uniform parameter coordinate, let $X_h(t)$
be the reference interpolant of the stored nodal positions, and write
$w_h(t)=u_h(X_h(t))$.
Exact promotion of the nodal values and interpolation abscissae to
rational numbers gives monomial-coefficient bounds
\[
 \|D_tX_h-I_3\|_2\le\rho_X,\qquad
 \|D_tw_h\|_2\le\rho_u.
\]
The bounds include the small difference between the represented reference
interpolant and its ideal affine map.
If $\rho_X+\rho_u<1$, the reference and current parameter maps are
Lipschitz perturbations of the identity with constants below one.
They are therefore globally injective and orientation preserving.
The chain rule and the smallest-singular-value bound give
\begin{equation}
 \|\nabla_Xu_h\|_2\le\frac{\rho_u}{1-\rho_X},
 \qquad
 \det(I_3+\nabla_Xu_h)
 \ge\left(\frac{1-\rho_X-\rho_u}{1+\rho_X}\right)^3>0.
 \label{eq:represented-input-admissibility}
\end{equation}
For the determinant estimate, the current parameter Jacobian has all
singular values at least $1-\rho_X-\rho_u$, while the reference determinant
is at most $(1+\rho_X)^3$. Its sign stays positive along the straight
homotopy from the identity.
All $24$ starting/correction pairs satisfy this check, with the physical
displacement-gradient bound below $1/4$.

On the supported ray, the $H^1$ correction norm decreases from
$6.461663\times10^{-4}$ to $8.077079\times10^{-5}$ for $N=1$,
from $2.272291\times10^{-4}$ to $2.840363\times10^{-5}$ for $N=2$,
and from $8.022895\times10^{-5}$ to $1.002862\times10^{-5}$ for $N=4$.
The cell residual displays its cubic scale, while the constructed norm
is linear in the ray amplitude.
\Cref{fig:journal-correction} compares the three refinements.

For the full-space field at its first scale, the largest raw cell residuals
are $2.343427\times10^{-14}$ and $2.241365\times10^{-14}$ for $N=2,4$.
Their direct corrected maxima are $3.75087\times10^{-21}$ and
$7.37792\times10^{-22}$.
The corresponding $H^1$ correction norms are
$2.586352\times10^{-4}$ and $3.484160\times10^{-4}$.
All face traces remain unchanged, and an isolated first-cell update
leaves every other cell's raw volume unchanged.

The retained exact cell-volume changes are resolved independently.
At the smallest scale, ideal-rule $80$-digit evaluation gives first-cell
exact changes $2.9292838872488705\times10^{-15}$ and
$5.788648109103542\times10^{-16}$ for $N=2,4$.
They are unchanged through all $65$ reported significant digits after
correction. The corresponding ideal-rule raw residuals are
$5.25015\times10^{-21}$ and $7.64509\times10^{-22}$.
Represented-rule evaluation gives different small residuals; this separates
quadrature representation from the exact-volume invariance.

For $N=1$, the global one-component starting field has exact zero volume
defect. Its direct residuals near $10^{-30}$ are a rounding control.
The first absolute residual increases from $8.82538\times10^{-30}$ to
$1.03045\times10^{-29}$ after a small rounding-driven correction;
its signed value changes from positive to negative.
Its norm is a constructed change rather than a closest distance.

\subsection{Conservative response and load amplitudes}
\label{app:tensor-response-protocol}

On the two-cell \(\mathbb Q_3\) model, use \(M\), \(v_0\) and
\(\ell=Mv_0\) from \cref{sec:evidence}.
The penalty coefficient is
$\kappa=\beta/\varepsilon^4$, with $\beta=3.4539763764$.
For its selection, an $M$-orthonormal basis of the resting kernel $K=\ker B_h$ expresses the
normalized aggregate cubic as a symmetric coefficient tensor.
Its Frobenius norm gives the bound $b=0.1098334683$, and we use
$\beta=1/(24b^2)$. The strength is conservative and \(v_0\) is an explicit witness load.

The stationary equations are solved in scaled displacement and pressure
coordinates, with an analytic Jacobian. The aggregate cubic is evaluated
directly, while the remaining pressure equation is scaled separately.
This invertible row transformation retains both pressure equations at
every positive $\varepsilon$. The raw and exact-volume stationary
references of the limiting energy are computed independently.

\begin{table}[H]
\centering
\caption{Finite-strain response on the two-cell tensor mesh.
The last column is $\|u_Q/\varepsilon-u_I/\varepsilon\|_M$, where
$u_Q,u_I$ are the computed stationary displacements. The limiting
stationary reference gives $1.069054666\times10^{-4}$.}
\label{tab:tensor-response}
\begin{tabular}{@{}rrr@{}}
\toprule
Load amplitude $\varepsilon$ & Bulk/shear $\kappa/\mu$
& Normalized separation\\
\midrule
$9.560363938\times10^{-4}$ & $2.06724\times10^{12}$ & $1.066431241\times10^{-4}$\\
$4.780181969\times10^{-4}$ & $3.30759\times10^{13}$ & $1.067743061\times10^{-4}$\\
$2.390090985\times10^{-4}$ & $5.29214\times10^{14}$ & $1.068398945\times10^{-4}$\\
\bottomrule
\end{tabular}
\end{table}

The six stationary solves converge with scaled residual at most
\(1.61\times10^{-12}\). Their normalized separation approaches
\(1.069054666\times10^{-4}\), about \(0.0107\%\), at the bulk/shear ratios
in \cref{tab:tensor-response}.

\paragraph{The extended load families.}
For the strengths \(\beta_{\rm s},\beta_{\rm l}\) defined in
\cref{sec:extended-response-observations}, the amplitude construction is
as follows.
For each strength, compute separate raw and exact stationary references
of the reduced sextic energy. Let $L_0$ bound the maximum gradient of
$v_0$, and let $L_Q,L_I$ be the corresponding polynomial upper estimates
for the raw and exact reduced references.
The prescribed first load amplitude is
\[
 \varepsilon_0=\min\left\{\frac18,\,
 \frac1{16\max\{1,L_0,L_Q,L_I\}}\right\},\qquad
 \varepsilon_j=\varepsilon_0/2^j,\quad j=0,1,2,3.
\]
For $\mathbb Q_3$, both strengths use
$\varepsilon_0=9.560363938\times10^{-4}$.
The $\mathbb Q_4$ conservative and load-normalized values are
$1.304338071\times10^{-3}$ and $6.480205150\times10^{-4}$.
A third family fixes $\kappa/\mu=1000$, hence $\kappa=2000$, on each
model's conservative amplitude grid.
Its initial state is the finite-bulk linear response
\[
 v_{\kappa}=(M+\kappa B_h^T\mathsf V^{-1}B_h)^{-1}\ell,
\]
with the corresponding linear pressure. The same scaled mixed equations
are used with $\beta=\kappa\varepsilon^4$.
Each solve starts from its prescribed reference.

\subsection{Force residuals and response conditioning}
\label{sec:tensor-force-norms}

For a nodal force residual \(f\), let \(K=\ker B_h\) and define the
restricted dual norm by
\begin{equation}
\|f|_K\|_{M,*}
=\sup_{\substack{v\in K\\\|v\|_M=1}}|f^Tv|
=\|T^Tf\|_2,
\label{eq:kernel-force-norm}
\end{equation}
where \(T\) is an \(M\)-orthonormal basis of \(K\).
The comparisons below use this norm alongside the full condensed-force
residual and local displacement estimates.

\subsection{Conditioning of the conservative tensor response}
\label{sec:conservative-response-precision}

For the two-cell \(\mathbb Q_3\) model of \cref{sec:tensor-response-evidence},
the reduced raw reference stopped with an objective-based trust-region
warning and gradient norm $1.30354\times10^{-11}$. Its smallest Hessian
eigenvalue was $0.99970085$. A direct Newton diagnostic changes the
normalized displacement by $1.30281\times10^{-11}$ and leaves gradient
norm $5.08\times10^{-17}$. The predicted objective decrease,
$8.49\times10^{-23}$, lies below the approximately $1.11\times10^{-16}$
rounding scale of the objective value. The refined separation is
$1.069054796\times10^{-4}$. The figure retains the original approximate
reference and reports its residual; the diagnostic quantifies its effect.

Pressure recovery has a different sensitivity. The formula
$p=\kappa\mathsf V^{-1}q_h(u)$ amplifies tiny volume errors at the bulk
ratios in \cref{tab:tensor-response}. At the smallest load amplitude,
direct binary64 condensed-force residuals are $9.51\times10^{-7}$ for
raw volume and $4.48\times10^{-6}$ for exact volume, despite the much
smaller scaled mixed residuals. Independent $80$-digit reevaluation of the
stored states uses both the represented binary64 Gauss rule and the
ideal Gauss rule. The stored displacements and loads are unchanged.
The kernel-force quantity uses \eqref{eq:kernel-force-norm}.

\begin{table}[H]
\centering\small
\caption{High-precision reevaluation at the smallest tensor-response
amplitude. For the condensed force residual $f$, the force column is its
Euclidean norm and the kernel-force column is its restricted dual
$M$-norm on $K=\ker B_h$. The last column is the $M$-norm
of a normalized displacement Newton estimate from the stable mixed
linearization. These are local sensitivity estimates, without a
rigorous error enclosure or a finite-parameter global-minimum certificate.}
\label{tab:tensor-response-precision}
\begin{tabular}{@{}llrrr@{}}
\toprule
Volume & Gauss rule & Force norm & Kernel force & Displacement estimate\\
\midrule
Raw & Represented & $5.51\times10^{-7}$ & $3.29\times10^{-14}$ & $1.39\times10^{-10}$\\
Raw & Ideal & $2.23\times10^{-5}$ & $2.24\times10^{-14}$ & $4.34\times10^{-13}$\\
Exact & Represented & $3.54\times10^{-7}$ & $1.35\times10^{-16}$ & $5.65\times10^{-13}$\\
Exact & Ideal & $1.47\times10^{-5}$ & $1.35\times10^{-16}$ & $5.65\times10^{-13}$\\
\bottomrule
\end{tabular}
\end{table}

The condensed residual is concentrated in a stiff regular-constraint
direction. Its displacement estimate is far below the
\(1.07\times10^{-4}\) separation. Higher arithmetic precision retains
the representation error of the stored displacement and load.

\subsection{Conditioning of the stronger stationary branches}
\label{sec:strong-response-precision}

The raw load-normalized limiting optimizers stop when an objective
improvement cannot be predicted accurately.
For $\mathbb Q_3$, the reduced gradient norm is $3.26398\times10^{-9}$
and the numerical minimum Hessian eigenvalue is $0.38447$.
One gradient-based Newton diagnostic has $M$-norm $8.23945\times10^{-9}$
and leaves gradient norm $1.06602\times10^{-16}$.
Its predicted objective decrease $1.32675\times10^{-17}$ lies below
the $1.02249\times10^{-16}$ rounding scale.
The corresponding $\mathbb Q_4$ gradient is $6.82713\times10^{-10}$,
with a nearly flat minimum Hessian eigenvalue $1.81514\times10^{-6}$.
Its diagnostic step has norm $1.02894\times10^{-6}$ and leaves gradient
norm $1.04957\times10^{-11}$.
The predicted decrease $1.03464\times10^{-18}$ is below the objective
rounding scale $1.07414\times10^{-16}$.
The response plots use the original references.

Each of the four smallest-load raw and exact states on the two models
is independently reevaluated at $80$ digits under both quadrature definitions.
With represented and ideal quadrature,
the raw $\mathbb Q_3$ normalized displacement Newton estimates are
$1.12375\times10^{-6}$ and $1.13175\times10^{-12}$.
Its total condensed-force norms are $0.01765$ and $0.78415$, whereas their
restricted dual-$M$ norms on the resting kernel are
$1.83932\times10^{-10}$ and $1.30750\times10^{-10}$.
Pressure-range amplification thus remains distinct from displacement
sensitivity. The exact-state displacement estimates are about
$6.47\times10^{-13}$.

For the raw $\mathbb Q_4$ state, the represented and ideal displacement
estimates are $1.22059\times10^{-4}$ and $4.94460\times10^{-7}$.
The represented estimate is far below the raw/exact gap $0.156405$,
but exceeds the stored raw finite-to-limit distance $6.67572\times10^{-5}$.
The broad response separation is therefore better resolved than this
fine branch correction.
Its restricted dual-$M$ force norms are $1.80447\times10^{-9}$ and
$1.30262\times10^{-15}$; the exact-state displacement estimates are
about $1.34616\times10^{-12}$.
These are local linearized sensitivities. The nearly flat
\(\mathbb Q_4\) branch limits the finer convergence comparison.

\subsection{Independent implementation checks}

The complete pressure Hessian contains both the volume-gradient Gram term
and the pressure-weighted volume Hessian. Independently computed mixed
second-variation actions on the polynomial examples differ by at most
$1.92\times10^{-16}$ in absolute value. Deliberately omitting the
pressure-geometric term produces a relative error of approximately $33.8\%$
in the selected finite-state control. The percentage measures the effect of the omitted Hessian term
in this control.

The separate pointwise material-Hessian checks have normalized
discrepancies no larger than $3.05\times10^{-16}$. Direct mixed equations
and their algebraically scaled equivalents agree to relative error at
most $7.12\times10^{-14}$.
Scaled finite-difference Jacobian comparisons show the expected
truncation-to-rounding transition. Exact rational coefficient checks
supply an independent arithmetic route for the local polynomial identities.

\subsection{Pressure-volume rank loss and restoration}
\label{sec:pressure-volume-evidence}

Use the dense rational witness \(w\) from
\cref{app:pressure-certificate} in \(y(t)=X+tw\).
Its binary64 path has an exact rational relative-gradient bound
below \(0.318\), giving injectivity and orientation on the convex cube.

\begin{table}[H]
\centering
\caption{Volume-map observations along the admissible path.
The volume error is evaluated by the centered cubic formula,
avoiding subtraction of two nearly equal total volumes.
The two ranks use the same binary64 numerical rank convention;
the rank behavior is also supported by
\Cref{thm:volume-regularity} and the nonzero curvature of $w$.}
\label{tab:volume-path}
\begin{tabular}{@{}rrrr@{}}
\toprule
$t$ & Total-volume defect & Smallest raw singular value
& Raw/exact rank\\
\midrule
0 & 0 & $3.78\times10^{-17}$ & 47/47\\
$2^{-20}$ & $-2.43\times10^{-16}$ & $7.35\times10^{-11}$ & 48/47\\
$2^{-16}$ & $-6.22\times10^{-14}$ & $1.18\times10^{-9}$ & 48/47\\
$2^{-12}$ & $-1.59\times10^{-11}$ & $1.88\times10^{-8}$ & 48/47\\
$2^{-8}$ & $-4.07\times10^{-9}$ & $3.02\times10^{-7}$ & 48/47\\
\bottomrule
\end{tabular}
\end{table}

The aggregate raw directional second variation remains approximately
$-5.33877\times10^{-4}$ along the path.
The exact and corrected aggregate values remain at rounding scale,
and the smallest non-gauge exact singular value remains above
$2.40\times10^{-4}$.
The four-Jacobian volume formula differs from an independent
degree-three rule by at most $4.511\times10^{-17}$ absolutely;
their gradients differ by at most $9.066\times10^{-16}$ relatively.

\subsection{Nonlinear feasible projection with no pressure bias}
\label{sec:projection-evidence}

The residual-distance consequence was tested through a nonlinear
displacement projection on the same $81$-coordinate template.
The fixed positive metric $M$ was its pressure-exact identity Hessian
with unit shear and bulk parameter $10$.
Bulk \(10\) defines the metric; the volume constraints are hard cell averages.
For target displacement $tw$, we minimized
$\|u-tw\|_M^2/2$ under three sets of constraints: all exact volumes,
all raw four-point volumes, and the reference-volume-weighted
mean-pressure projection of the raw volume equations.
The driving energy is quadratic, the constraints are nonlinear,
and the pressure bias is zero.

We used the scaled displacement $u=t\widehat u$.
The first $47$ raw volume equations were divided by $t$ and their
aggregate by $c_4t^2$; the latter was evaluated directly through
the centered cubic coefficients.
This row transformation preserves all $48$ raw equations for
each nonzero $t$.
The exact-volume route required $47$ independent rows.
The mean-projected route used the equivalent equations
$g_i^Q-(V_{i0}^Q/V_{48,0}^Q)g_{48}^Q=0$, $i=1,\ldots,47$.
Sequential quadratic programming used analytic objective and
constraint gradients, with tolerance $10^{-12}$ and at most
$300$ iterations. The limiting-cone reference in
\Cref{app:projection-reference} supplied the initial raw direction.
All twelve selected solves completed in $5$--$21$ iterations.

\begin{table}[H]
\centering
\caption{Relative distance from the target $tw$ in the fixed $M$ metric.
Raw distances approach the computed cone reference $0.6121334$;
exact and mean-projected distances decrease in proportion to $t$,
corresponding to second-order absolute displacement differences.
The final column is the maximum raw cell-volume residual of the
unprojected target $tw$, independently evaluated at its rounded nodes.}
\label{tab:nonlinear-projection}
\begin{tabular}{@{}rrrrr@{}}
\toprule
$t$ & Exact & Raw & Mean-projected raw & Target raw residual\\
\midrule
$2^{-8}$ & $3.2551\times10^{-3}$ & 0.61155
& $3.3225\times10^{-3}$ & $1.187\times10^{-7}$\\
$2^{-10}$ & $8.1453\times10^{-4}$ & 0.61210
& $8.3141\times10^{-4}$ & $7.417\times10^{-9}$\\
$2^{-12}$ & $2.0364\times10^{-4}$ & 0.61213
& $2.0787\times10^{-4}$ & $4.635\times10^{-10}$\\
$2^{-14}$ & $5.0913\times10^{-5}$ & 0.61213
& $5.1968\times10^{-5}$ & $2.897\times10^{-11}$\\
\bottomrule
\end{tabular}
\end{table}

Every rounded final map had a rational relative-gradient bound
below $0.323$, proving admissibility.
Independent $80$-digit integration of the ideal rules at those
rounded coordinates gave selected-constraint residuals no larger
than $1.080\times10^{-17}$.
Residuals evaluated in the scaled polynomial coordinates reach
$10^{-23}$. The rounded nodal geometry
has the independently evaluated residuals reported above.
The mean-projected solutions satisfy their selected equations
to the same numerical scale while retaining nonzero omitted raw
volume residuals between $8.746\times10^{-11}$ and
$2.136\times10^{-14}$.

The raw projection stays separated despite its target's vanishing
residual; mean-pressure projection recovers the first-order response
with a changed constraint set. The finite-\(t\) optimizer outputs
have no global-optimality certificate.

\subsection{Finite bulk stiffness and the displacement limit}
\label{sec:finite-penalty-evidence}

The finite-penalty comparison uses the same $81$ free coordinates,
reference geometry and positive quadratic FEM metric as
\cref{sec:projection-evidence}. The metric has unit shear and reference
bulk $10$. The background energy is the fixed quadratic
$\mathcal E(u)=u^TMu/2$; the added volume penalty uses the full nonlinear
maps $g^Q$ and $g^I$.

The target $v_*$ is the numerical extreme generalized eigenmode in
\cref{cor:finite-penalty-mode}, normalized to unit $M$-norm.
It differs from the earlier rational sparse witness. Its eigenvalue is
$\lambda=2.77485093\times10^{-4}$ and the normalization gives
$2/\lambda^2=2.59747085\times10^7$.
The dimensionless strength $c$ sets the bulk coefficient through
$\kappa=2c/(\lambda^2\varepsilon^2)$, where $\varepsilon$ is the load
amplitude in \cref{cor:finite-penalty-mode}.
Stationary equations are solved in rescaled displacement and pressure
coordinates, retaining all $48$ pressure equations. The mean-volume
equation uses the centered determinant identity. An analytic Jacobian
avoids finite-difference perturbations of small pressure components.

\begin{table}[H]
\centering\small
\caption{Finite-penalty displacement response. Distances are
$\|u/\varepsilon-v_*\|_M$. The predicted raw limits are $0.15229240$
for $c=1/4$ and $0.31767220$ for $c=1$; the exact-volume limit is zero.
The final row fixes the bulk coefficient and lets the normalized strength vary.}
\label{tab:finite-penalty}
\begin{tabular}{@{}rrrrr@{}}
\toprule
$c$ & $\varepsilon$ & $\kappa$ & Raw distance & Exact distance\\
\midrule
$1/4$ & $2^{-8}$ & $4.256\times10^{11}$ & $0.15232562$ & $0.00381842$\\
$1/4$ & $2^{-10}$ & $6.809\times10^{12}$ & $0.15229446$ & $0.00095577$\\
$1/4$ & $2^{-12}$ & $1.089\times10^{14}$ & $0.15229253$ & $0.00023896$\\
$1$ & $2^{-8}$ & $1.702\times10^{12}$ & $0.31757561$ & $0.00381842$\\
$1$ & $2^{-10}$ & $2.724\times10^{13}$ & $0.31766607$ & $0.00095577$\\
$1$ & $2^{-12}$ & $4.358\times10^{14}$ & $0.31767181$ & $0.00023896$\\
$5.874\times10^{-7}$ & $2^{-8}$ & $10^6$ & $0.00343422$ & $0.00346725$\\
\bottomrule
\end{tabular}
\end{table}

All fourteen stationary solves converge, with scaled mixed residual norms
at most $6.69\times10^{-14}$. The represented-coordinate relative-gradient
bound is below $0.25$ in every case, certifying admissibility.
At fixed $\kappa=10^6$, the raw and exact target-mode amplitudes are
$0.9999734754$ and $0.9999735584$.

The limiting minimum theorem concerns \eqref{eq:quartic-volume-limit};
the finite-parameter calculations establish stationarity, without
certifying minimality.

At the largest coefficient, pressure recovery is sensitive to coordinate
representation. For the exact-volume case, recovering cell pressures from
rounded binary64 absolute positions differs from the mixed pressure by
$0.3471$ in vector norm, against a mixed-pressure norm of
$1.2923\times10^{-4}$. Independent $80$-digit direct quadrature and
stationary refinement confirm the displacement effect: the changes in
$u/\varepsilon$ have $M$-norms $2.01\times10^{-15}$ for raw volumes and
$4.90\times10^{-15}$ for exact volumes. Their refined distances are
$0.3176718129608771$ and $0.0002389604427563$, with mixed residuals below
$5.3\times10^{-60}$. The displacement is resolved despite the pressure-recovery sensitivity.

\subsection{Geometric scope of the pressure-volume defect}

A ten-geometry comparison uses the same \(48\)-cell connectivity and
complete outer clamp.
Six geometries prescribe
$X(s)=s+\alpha e_3\prod_i s_i(1-s_i)$ at the quadratic nodes, with
$\alpha\in\{0,1/4,1/2,1,2,4\}$.
The other four use two fixed interior three-component integer nodal fields,
each scaled by $1/1024$ and $2/1024$; their entries lie between $-8$ and $8$
and their boundary values vanish. The actual integer fields are retained
by the authors with the verification reference. A reference-gradient perturbation bound below one
certifies injectivity and orientation for every case; its maximum is $0.45107$.

Exact rational integration gives total reference volume one and zero
constant-pressure force for the exact-volume map in all ten geometries.
The one-component maps also have exactly zero raw scalar and first-variation
defects. The generic maps already have a nonzero raw constant-pressure
force: the Euclidean norm of $D(\sum_e g_e^Q)(X)$ ranges from
$2.5811\times10^{-5}$ to $1.1391\times10^{-4}$.
The largest relative difference between raw and exact-volume linear
responses is $5.0088\times10^{-4}$, using one fixed nodal load and bulk
values $0,10,1000$. These are differences between the two prescribed
discretizations, not absolute engineering-accuracy measurements.

The affine control has exact volume-Jacobian rank $44$; every curved case
has rank $47$. The affine pressure kernel contains the constant mode and
three additional independent zero-mean modes, all retained in the exact
certificate. For an affine tetrahedron, the vertex basis
$N_i=\lambda_i(2\lambda_i-1)$ satisfies
$\int_T\nabla N_i=0$, since the mean of the barycentric coordinate
$\lambda_i$ is $1/4$. Curving activates these vertex columns, although
the midside-only block itself also changes from rank $44$ to $47$.
The rank recovery therefore cannot be assigned to vertex activation alone.

\subsection{Pressure-bias negative-mode control}

At $\eta=2^{17}$ and $\kappa=10$, the $48$-cell template has
$18$ negative four-point eigenvalues among $81$ free coordinates.
Eight scaled templates have $236$ negatives among $1029$ coordinates.
Both pressure-exact counterparts have zero observed negative
eigenvalues, with minimum generalized eigenvalues relative to
$G_\kappa$ of $0.9996119$ and $0.9993353$.
The certified lower bounds are one and eight; the larger numerical
counts have no exact-arithmetic certification.

\subsection{A prior-art-informed volume-variance control}
\label{sec:variance-evidence}

We evaluated \eqref{eq:variance-energy} on the same $81$-coordinate
template at $\kappa=10$ and $\eta=2^{17}$.
The variance Hessian was assembled as a positive weighted Gram matrix.
Its independent comparison with the pointwise-minus-cellwise volume
penalty agreed to $4.520\times10^{-16}$ relatively.
For the retained dense rational witness $w$, the raw tangent value
was $-67.9304$, while $w^TS_{\rm var}w=0.746264$.
The mean-volume action is exactly zero by its earlier rational
certificate.

For variance coefficient \(\alpha\), \cref{tab:variance-control}
gives numerical spectra relative to the same template's pressure-exact
Hessian \(K^I\).

\begin{table}[H]
\centering
\caption{Defined volume-variance control on the pressure-bias template.
Each row uses $K^Q+\alpha S_{\rm var}$. The final column is the
Frobenius norm of the added term divided by $\|K^I\|_F$.
The table neither implements the composite five-field method nor
estimates its engineering accuracy.}
\label{tab:variance-control}
\begin{tabular}{@{}rrrrr@{}}
\toprule
$\alpha$ & Negative count & Minimum eigenvalue & Maximum eigenvalue
& Added/reference norm\\
\midrule
0 & 18 & $-49.598$ & $51.889$ & 0\\
1 & 18 & $-49.426$ & $52.064$ & 0.0703\\
10 & 18 & $-47.996$ & $53.770$ & 0.7026\\
100 & 16 & $-40.741$ & $86.460$ & 7.026\\
1000 & 7 & $-12.942$ & $640.151$ & 70.26\\
$10^4$ & 0 & 0.07325 & 6342.7 & 702.6\\
$10^5$ & 0 & 0.94032 & 63386.4 & 7025.7\\
$10^6$ & 0 & 1.21821 & 633825.5 & 70256.8\\
\bottomrule
\end{tabular}
\end{table}

The first positive row in this grid is $\alpha=10^4$; the grid
does not determine a minimum stabilizing coefficient.
A seeded random right-hand-side solve has relative residual
$6.895\times10^{-14}$ for that controlled equation, while its
solution differs from the pressure-exact solution by $0.9694$
relatively. This is a linear operator-response comparison.
The variance term removes the observed negative directions with a
large stiffness change. Exact volume restores the null pressure
contribution without that parameter.

\subsection{Complete-operator scope in a loaded physical state}
\label{sec:retained-operator-scope}

The complete-pressure identity \eqref{eq:scope-identity} matters beyond
the small constraint examples. A pneumatic-finger/contact
configuration has $176{,}982$ free displacement coordinates and
$34{,}295$ quadratic tetrahedra. Reconstructing the complete energy Hessian
and the partial Hessian from the same state closes that identity to
$1.193\times10^{-16}$ in relative Frobenius norm.
The partial Hessian omits the two condensed-pressure contributions specified
in \eqref{eq:scope-identity}. A cuDSS sparse factorization reports
$22$ negative eigenvalues for this partial Hessian and zero for the complete
energy Hessian, for both neo-Hookean and finite-extensibility Gent
reconstructions \citep{gent1996constitutive}.
These inertia counts are numerical estimates from the sparse factorization.

A separate loaded-state volume correction has an unfavorable sign.
For its partial Hessian, exact-minus-four-point correction changes
the numerical negative count from $21$ to $36$ and the fixed-vertex midside
block count from zero to seven. Its assembled correction norm is
$4.67165\times10^{-5}$ relative to the partial-Hessian norm.
The cell-Hessian identity agrees with independent integration to
absolute error at most $4.60059\times10^{-14}$.
This loaded, mixed-boundary partial Hessian differs from the
complete fully clamped operator in the pressure-bias theorem.
Its sign change precludes an unconditional positive-tangent
interpretation of exact-volume correction.

\section{First-invariant constitutive identities}
\label{app:constitutive-identities}

This appendix supplies the constitutive identities used by the
Tet10 mechanical examples. The isochoric--volumetric separation follows the
classical Flory setting \citep{flory1961thermodynamic}; generalized
first-invariant laws are considered on their admitted constitutive domains
\citep{hartmann2003polyconvexity}.

Let $F\in\GLp$ be a deformation gradient and $H,L\in\R^{3\times3}$
matrix increments. Put
\[
J=\det F,\qquad I_1=F{:}F,\qquad
\bar I_1=J^{-2/3}I_1,\qquad
\theta_H=F^{-T}{:}H=\tr(F^{-1}H),
\]
with the analogous definition for $\theta_L$.
Determinant differentiation gives
\begin{align}
DJ(F)[H]&=J\theta_H,\label{eq:det-first}\\
D^2J(F)[H,L]
&=J\bigl(\theta_H\theta_L-\tr(F^{-1}HF^{-1}L)\bigr).
\label{eq:det-second}
\end{align}
These identities follow by differentiating determinant multiplicativity
and $D(F^{-1})[L]=-F^{-1}LF^{-1}$.

Let $\mathcal S$ be an admitted interval of modified-invariant values,
$f\in C^2(\mathcal S)$ and $g\in C^2((0,\infty))$. The density is
\begin{equation}
W(F)=f(\bar I_1)+g(J).
\label{eq:material-class}
\end{equation}
Here $g$ is a scalar volumetric potential; the superscripted maps $g^r(y)$
in \Cref{sec:pressure-volume} instead collect discrete cell-volume changes.
No existence, global calibration or polyconvexity assertion is required
for the pointwise identities below.

Define the scalar linear functional
\begin{equation}
\ell_F(H)=F{:}H-\frac{I_1}{3}\theta_H
\label{eq:ell-definition}
\end{equation}
and the symmetric bilinear form
\begin{align}
Q_F(H,L)
={}&\left(H-\frac23\theta_HF\right){:}
       \left(L-\frac23\theta_LF\right)
       +\frac{I_1}{9}\theta_H\theta_L.
\label{eq:q-definition}
\end{align}
Its quadratic value is a sum of squares:
\begin{equation}
Q_F(H,H)=
\left\lVert H-\frac23\theta_HF\right\rVert_F^2
+\frac{I_1}{9}\theta_H^2.
\label{eq:q-square}
\end{equation}
Equivalently, the linear factor map
\begin{equation}
\mathcal B_F(H)=
\left(H-\frac23\theta_HF,\frac{\sqrt{I_1}}{3}\theta_H\right)
\in\R^{3\times3}\times\R
\label{eq:flory-factor-map}
\end{equation}
gives $Q_F(H,L)=\langle\mathcal B_F(H),\mathcal B_F(L)\rangle$
in the standard product inner product.

\begin{lemma}[Positive Flory base]
\label{lem:q-positive}
For every $F\in\GLp$, $Q_F$ is an inner product on
$\R^{3\times3}$.
\end{lemma}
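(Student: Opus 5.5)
The plan is to use the sum-of-squares representation \eqref{eq:q-square}, or equivalently the factor map \eqref{eq:flory-factor-map}. Symmetry and bilinearity of $Q_F$ hold by construction, since $\theta_H$ is linear in $H$ and $Q_F(H,L)=\langle\mathcal B_F(H),\mathcal B_F(L)\rangle$ with $\mathcal B_F$ linear. Because $Q_F$ is the pullback of the standard product inner product on $\R^{3\times3}\times\R$, positive semidefiniteness is immediate. It therefore suffices to show that $\mathcal B_F$ is injective, i.e.\ that $Q_F(H,H)=0$ forces $H=0$.

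For injectivity, we suppose $Q_F(H,H)=0$. First observe that $F\in\GLp$ gives $F\ne0$, so $I_1=F{:}F>0$. The second square, $(I_1/9)\theta_H^2$, then vanishes only if $\theta_H=0$. Substituting $\theta_H=0$ into the first square, $\|H-\tfrac23\theta_HF\|_F^2$ reduces to $\|H\|_F^2$, which must vanish, so $H=0$.

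The only point needing care is the strict positivity of $I_1$, and this is where invertibility (indeed merely $F\ne0$) of $F$ enters. Beyond that, no obstacle is expected. The argument is a two-step elimination: the scalar component of $\mathcal B_F$ kills the trace-like quantity $\theta_H$, and then the matrix component kills the remainder.
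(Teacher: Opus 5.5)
Your proposal is correct and matches the paper's proof: both read positive semidefiniteness off the sum-of-squares form \eqref{eq:q-square}, use $I_1>0$ to force $\theta_H=0$, and conclude $H=0$. Note that invertibility of $F$ is also needed simply to define $\theta_H=\operatorname{tr}(F^{-1}H)$, beyond the weaker condition $F\ne0$ you cite for $I_1>0$.
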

\begin{proof}
Symmetry and bilinearity follow from \eqref{eq:q-definition}. Suppose
$Q_F(H,H)=0$. Since $I_1>0$, both nonnegative terms in
\eqref{eq:q-square} vanish, giving $\theta_H=0$ and
$H-(2/3)\theta_HF=0$. Hence $H=0$, which proves positive definiteness.
\end{proof}

\begin{theorem}[Generalized first-invariant factorization]
\label{thm:general-factor}
Let $F\in\GLp$, let $f\in C^2(\mathcal S)$ at
$\bar I_1(F)$, and let $H,L\in\R^{3\times3}$. Then
\begin{align}
D\bar I_1(F)[H]
={}&2J^{-2/3}\ell_F(H),\label{eq:first-modified}\\
D^2\bar I_1(F)[H,L]
={}&2J^{-2/3}Q_F(H,L)
-\frac{2I_1}{3J^{5/3}}D^2J(F)[H,L].
\label{eq:second-modified}
\end{align}
Define the generalized gauge and its induced determinant coefficient by
\begin{align}
\widetilde A_f(F)[H,L]
={}&2f'(\bar I_1)J^{-2/3}Q_F(H,L)
+4f''(\bar I_1)J^{-4/3}\ell_F(H)\ell_F(L),
\label{eq:general-gauge}\\
\pi_f(F)&=\frac{2f'(\bar I_1)I_1}{3J^{5/3}}.
\label{eq:induced-pressure}
\end{align}
Then
\begin{equation}
D^2(f\circ\bar I_1)(F)[H,L]
=\widetilde A_f(F)[H,L]-\pi_f(F)D^2J(F)[H,L].
\label{eq:f-factorization}
\end{equation}
For the full density \eqref{eq:material-class},
\begin{equation}
D^2W(F)=\widetilde A_f+g''(J)DJ\otimes DJ
+\bigl[g'(J)-\pi_f(F)\bigr]D^2J,
\label{eq:full-material-factor}
\end{equation}
where $(DJ\otimes DJ)[H,L]=DJ[H]DJ[L]$.
\end{theorem}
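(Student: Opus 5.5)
The plan is a direct differentiation of \(\bar I_1=J^{-2/3}I_1\) by the product rule, using \eqref{eq:det-first}--\eqref{eq:det-second}. The only real work is reorganising the second derivative into the positive form \(Q_F\) plus a determinant-Hessian term.

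\emph{First derivative.} We have \(DI_1[H]=2F{:}H\) and \(D(J^{-2/3})[H]=-\tfrac23J^{-2/3}\theta_H\) by \eqref{eq:det-first}. Hence
\[
D\bar I_1[H]=J^{-2/3}\bigl(2F{:}H-\tfrac23I_1\theta_H\bigr)=2J^{-2/3}\ell_F(H),
\]
which is \eqref{eq:first-modified}.

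\emph{Second derivative.} Differentiate \(2J^{-2/3}\ell_F(H)\) in direction \(L\). Three pieces arise:
\[
-\tfrac43J^{-2/3}\theta_L\ell_F(H),\qquad
2J^{-2/3}H{:}L,\qquad
-\tfrac23J^{-2/3}D(I_1\theta_H)[L].
\]
The last piece expands as \(D(I_1\theta_H)[L]=2(F{:}L)\theta_H+I_1D\theta_H[L]\). For the derivative of \(\theta_H\), the key observation is that \eqref{eq:det-second} gives
\[
D\theta_H[L]=-\tr(F^{-1}LF^{-1}H)=J^{-1}D^2J[H,L]-\theta_H\theta_L .
\]
Substituting this eliminates the trace term in favour of \(D^2J\). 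It produces exactly the term \(-\tfrac{2I_1}{3J^{5/3}}D^2J[H,L]\), together with the term \(+\tfrac23J^{-2/3}I_1\theta_H\theta_L\).

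Collect the remaining terms and divide by \(2J^{-2/3}\). Expanding \(\ell_F(H)\) in the first piece, the remainder is
\[
H{:}L-\tfrac23\theta_L(F{:}H)-\tfrac23\theta_H(F{:}L)+\tfrac29I_1\theta_H\theta_L+\tfrac13I_1\theta_H\theta_L .
\]
Expanding \eqref{eq:q-definition} gives
\[
Q_F(H,L)=H{:}L-\tfrac23\theta_L(F{:}H)-\tfrac23\theta_H(F{:}L)+\tfrac49I_1\theta_H\theta_L+\tfrac19I_1\theta_H\theta_L .
\]
The two expressions agree, since \(\tfrac49+\tfrac19=\tfrac59=\tfrac29+\tfrac13\). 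This proves \eqref{eq:second-modified}. This coefficient bookkeeping is the main obstacle. I would guard against errors by checking the identity at \(F=I_3\), where it must reduce to \(Q_{I_3}-D^2\det(I_3)\) as used in \cref{lem:pressure-template}.

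\emph{Composition with \(f\).} The chain rule gives
\[
D^2(f\circ\bar I_1)=f''\,D\bar I_1\otimes D\bar I_1+f'\,D^2\bar I_1 .
\]
Inserting \eqref{eq:first-modified} into the first term yields \(4f''J^{-4/3}\ell_F(H)\ell_F(L)\). Inserting \eqref{eq:second-modified} into the second yields the \(2f'J^{-2/3}Q_F\) term and \(-\pi_fD^2J\). This is \eqref{eq:f-factorization}.

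\emph{Volumetric part.} For \(g(J)\), the chain rule gives
\[
D^2(g\circ J)=g''\,DJ\otimes DJ+g'\,D^2J .
\]
Adding this to \eqref{eq:f-factorization} gives \eqref{eq:full-material-factor}.
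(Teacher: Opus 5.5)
Your proposal is correct and follows essentially the same route as the paper: direct product-rule differentiation of $J^{-2/3}I_1$, elimination of the trace term via \eqref{eq:det-second}, coefficient matching against the expansion of $Q_F$ (your check $\tfrac29+\tfrac13=\tfrac49+\tfrac19$ is right), and then the scalar chain rule for $f$ and $g$. The only difference is cosmetic: you convert $\tr(F^{-1}LF^{-1}H)$ into $D^2J$ while differentiating $\theta_H$, whereas the paper first writes out the full second derivative with the trace term and substitutes afterwards.
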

\begin{proof}
Differentiate $I_1=F{:}F$ and $J^{-2/3}$. Using
$D(J^{-2/3})[H]=-(2/3)J^{-2/3}\theta_H$ gives
\[
D\bar I_1[H]
=2J^{-2/3}\left(F{:}H-\frac{I_1}{3}\theta_H\right),
\]
which is \eqref{eq:first-modified}. A second differentiation gives
\begin{align*}
D^2\bar I_1[H,L]
={}&2J^{-2/3}H{:}L
-\frac43J^{-2/3}\bigl(\theta_LF{:}H+\theta_HF{:}L\bigr)\\
&+\frac{4I_1}{9}J^{-2/3}\theta_H\theta_L
+\frac{2I_1}{3}J^{-2/3}\tr(F^{-1}HF^{-1}L).
\end{align*}
Expanding \eqref{eq:q-definition} and substituting
\eqref{eq:det-second} gives \eqref{eq:second-modified}.
The scalar chain rule
\[
D^2(f\circ\bar I_1)
=f''D\bar I_1\otimes D\bar I_1+f'D^2\bar I_1
\]
proves \eqref{eq:general-gauge}--\eqref{eq:f-factorization}.
Finally, $D^2(g\circ J)=g''DJ\otimes DJ+g'D^2J$ proves
\eqref{eq:full-material-factor}.
\end{proof}

For the neo-Hookean law $f_{\rm NH}(s)=\mu(s-3)/2$, with shear modulus
$\mu>0$, the identity specializes to
\begin{equation}
D^2f_{\rm NH}(\bar I_1)[H,H]
=\mu J^{-2/3}Q_F(H,H)
-\frac{\mu I_1}{3J^{5/3}}D^2J(F)[H,H].
\label{eq:nh-completion}
\end{equation}
The first term is a positive pointwise reference form. The second is the
induced-pressure determinant curvature, whose integrated behavior depends
on the declared geometry, cubature and boundary conditions. At $F=I_3$,
$Q_{I_3}(H,L)=H{:}L+(\tr H)(\tr L)/3$, giving the unit-shear form used
in \Cref{lem:pressure-template}.

\section*{CRediT author statement}
Yanlin Liu originated the central theoretical ideas and mathematical
formulations and led the theoretical development.

\noindent\textbf{Yanlin Liu:} Conceptualization (central theoretical ideas);
Methodology (mathematical formulation and method development);
Formal analysis (theoretical analysis and derivations);
Software (research-specific development, framework extensions and performance
optimization); Investigation; Validation; Writing---original draft;
Writing---review and editing.

\noindent\textbf{Chao Huang:} Software (development of the foundational
simulation framework); Conceptualization (introduction of the underlying
numerical problem and the near-incompressibility regime).

\noindent\textbf{Kaixiang Yao:} Conceptualization (introduction of the
motivating Tet10 problem and its robotics application context);
Writing---review and editing.

\noindent\textbf{Yao Shen:} Supervision; Resources (provision of computational
resources); Conceptualization (broad problem introduction and research context).

\section*{Data and code availability}
Exact certificates, verification scripts, controlled inputs, and retained
numerical outputs are available from the authors upon reasonable request.
A public reproducibility repository is planned for a subsequent version.
The private simulation platform and its production solvers are outside the
planned release.

\section*{Declaration of generative AI use}
OpenAI Codex was used to assist with experimental coding, preliminary
mathematical review, and preliminary drafting. Anthropic Claude was
used for draft reviewing and editing. Yanlin Liu provided the central
theoretical ideas and initial mathematical formulations.
Responsibility for the claims, citations, computational results,
and final manuscript rests with the named authors.

\bibliographystyle{unsrtnat}
\bibliography{references}

\end{document}